\pgfplotsset{compat=newest}
\newtheorem{theorem}{Theorem}[section]
\newtheorem{prop}[theorem]{Proposition}
\newtheorem{corollary}[theorem]{Corollary}
\newtheorem{lemma}[theorem]{Lemma}
\theoremstyle{remark}
\newtheorem{remark}[theorem]{Remark}
\theoremstyle{definition}
\newtheorem*{acknowledgements}{Acknowledgements}
\newcommand*\Co{\mathbb{C}}
\newcommand*\Z{\mathbb{Z}}
\newcommand{\Q}{\mathbb{Q}}
\newcommand{\F}{\mathbb{F}}
\newcommand{\as}{a_{\sigma}}
\newcommand{\bs}{b_{\sigma}}
\newcommand{\cs}{c_{\sigma}}
\newcommand{\ds}{d_{\sigma}}
\newcommand{\two}{\mathbb{Z}/2\mathbb{Z}}
\newcommand{\four}{\mathbb{Z}/4\mathbb{Z}}
\newcommand{\six}{\mathbb{Z}/6\mathbb{Z}}
\newcommand{\eight}{\mathbb{Z}/8\mathbb{Z}}
\newcommand{\sixteen}{\mathbb{Z}/16\mathbb{Z}}
\newcommand{\thirtytwo}{\mathbb{Z}/32\mathbb{Z}}
\newcommand{\GL}{\mathrm{GL}}
\newcommand{\SL}{\mathrm{SL}}
\DeclareMathOperator{\Aut}{Aut}
\DeclareMathOperator{\Gal}{Gal}
\DeclareMathOperator{\Hom}{Hom}
\begin{document}

\title[Torsion Subgroups of Elliptic Curves]{Torsion Subgroups of Elliptic Curves over Quadratic Cyclotomic Fields in Elementary Abelian $2$-Extensions}
\author{\"{O}zlem Ejder}
\address{Department of Mathematics\\
   University of Southern California\\
  Los Angeles, California 90089}
\email{ejder@usc.edu}
\date{April 2017}
\maketitle

  \begin{abstract}
     Let $K$ denote the quadratic field $\Q(\sqrt{d})$ where $d=-1$ or $-3$ and let $E$ be an elliptic curve defined over $K$. In this paper, we analyze the torsion subgroups of $E$ in the maximal elementary abelian $2$-extension of $K$.
  \end{abstract}

 \section{Introduction}
    Finding the set of rational points on a curve is one of the fundamental problems in number theory. Given a number field $K$ and an algebraic curve $C/K$,  the set, $C(K)$, of points on $C$ which are defined over $K$, has the following properties depending on the genus of the curve.
\begin{enumerate} 
  \item If $C$ has genus $0$, then $C(\Q)$ is either empty or infinite.
  \item If $C$ has genus greater than $1$, then $C(\Q)$ is either empty or finite. \\(Faltings's theorem)
\end{enumerate}
Assume $C$ has genus $1$. If $C(K)$ is not empty, then it forms a finitely generated abelian group, proven by Luis Mordell and Andr\'{e} Weil.  Thus, given an elliptic curve $E/K$, the group $E(K)$ has the structure
    \[ 
      E(K)_{\text{tors}} \oplus \Z^r.
    \]
Here $E(K)_{\text{tors}}$ is the finite part of this group $E(K)$, called the torsion subgroup and the integer $r$ is called the rank of $E$ over the number field $K$. 
  
In this paper, we will study the elliptic curves over the quadratic cyclotomic fields and how their torsion subgroups grow in the compositum of all quadratic extensions of the base field. 

First, we summarize the results obtained so far on $E(K)_{\text{tors}}$ for a number field $K$.  Mazur \cite{Mazur} showed that the only groups that can be realized as the torsion subgroups of  elliptic curves defined over $Q$ are the following:                             
      \begin{align*}   \label{Mazur}
                          \Z/m\Z \hspace{2mm} \text{for} \hspace{2mm}  1\leq m \leq 12,   m\neq 11,
                        \hspace{2mm} \text{or} \hspace{2mm}
                       \Z/2\Z \oplus \Z / 2m\Z  \hspace{2mm} \text{for} \hspace{2mm} 1\leq m \leq 4.
       \end{align*}
   Similarly, the list for the torsion groups of elliptic curves defined over a quadratic field has been given by S. Kamienny  \cite{Kamienny}, M.A. Kenku, and F. Momose \cite{Kenku-Momose}. 
           \[  
             \label{Kenku} \Z/m\Z  \hspace{2mm} \text{for} \hspace{2mm}1\leq m \leq 18, m\neq 17, 
                   \hspace{2mm}
        \Z/2\Z \oplus \Z/2m\Z    \hspace{2mm} \text{for} \hspace{2mm} 1\leq m \leq 6, 
                  \]
             \[ 
              \Z/3\Z \oplus \Z/3m\Z  \hspace{2mm} \text{for} \hspace{2mm} m=1,2,   \hspace{2mm} 
              \text{and} \hspace{2mm}
             \Z/4\Z \oplus \Z/4\Z.
              \]              
     If one fixes the quadratic field $K$, it is very likely that one will have a smaller list.  In fact, the groups $ \Z/3\Z \oplus \Z/3\Z$ and $ \Z/3\Z \oplus \Z/6\Z$ are only realized when $K=\Q(\sqrt{-3})$ where as the group  $ \Z/4\Z \oplus \Z/4\Z$ is only realized over the field $\Q(i)$ since they contain the roots of unity for $3$ and $4$ respectively (See Weil pairing). On the other hand, Filip Najman \cite{Najman-cyclotomic} has proved that 
        \begin{enumerate}  \label{Najman}
                  \item 
                          If $K=\Q(i)$, then $E(K)_{\text{tors}}$ is either one of the groups from Mazur's 
                            theorem or $\Z/4\Z \oplus \Z/4\Z$. 
                 \item
                            If $K=\Q(\sqrt{-3})$, then $E(K)_{\text{tors}}$ is either one of the groups from Mazur's 
                            theorem or $\Z/3\Z \oplus \Z/3\Z$ or $\Z/3\Z \oplus \Z/6\Z$.
        \end{enumerate}    
     One may also ask how the torsion subgroups of elliptic curves over a given number field $K$ grow over the compositum of all the quadratic extensions of $K$. 
     
     Let $F$ be the maximal  
     elementary abelian two extension of $K$, i.e.,                  
                     \[ 
                     F:=K[ \sqrt{d} : d \in \mathcal{O}_K ] 
                      \]    
  where $\mathcal{O}_K$ denotes the ring of integers of $K$. The problem of finding $E(F)_{\text{tors}}$ where $K=\Q$ has been studied by Michael Laska, Martin Lorenz \cite{Laska-Lorenz},  and Yasutsugu Fujita  
    \cite{Fujita1, Fujita2}. Laska and Lorenz described a list of $31$ possible groups and Fujita 
    proved that the list of $20$ different groups is complete.

    Our main theorem generalizes the results of Laska, Lorenz and Fujita to the 
    case 
    where $K$ is a quadratic cyclotomic field. We find that  (See Theorem \ref{main}.)

         \begin{enumerate}
                           \item If $K=\Q(i)$, then
                        $E(F)_{\text{tors}}$ is isomorphic to one of the following groups:
                        \begin{align*}
                          & \Z/{2}\Z \oplus \Z/{2N}\Z & &    ( N=2,3,4,5,6,8) & &\\
                          & \Z/{4}\Z \oplus \Z/{4N}\Z & &    ( N=2,3,4) & &\\
                          &\Z/N\Z \oplus \Z/N\Z && (N=2,3,4,6,8) & & 
                       \end{align*}
                 \noindent    or $\{1\}, \Z/3\Z$, $ \Z/5\Z$, $ \Z/7\Z$, $ \Z/9\Z$, $ \Z/15\Z$.                                  
                  \item If $K=\Q(\sqrt{-3})$, then $E(F)$ is either isomorphic to one of the groups listed above or 
                              \[ \Z/2\Z \oplus \Z/32\Z. \]             
               \end{enumerate}
 
 We first study the points on various modular curves and use these results in \S\ref{odd section} and in  \S\ref{restrictions} to prove Theorem \ref{Laska} which gives us a list of possible torsion subgroups. The main result of \S\ref{odd section} is Proposition \ref{odd} where we give the possible odd order subgroups of $E(F)$. \S\ref{Mordell} is concerned with finding non-trivial solutions to Fermat's quartic equation which is crucial to rule out the subgroup $\two \oplus \Z/32\Z$. All of these results together leads us to Theorem \ref{Laska}.
 
 In \S\ref{full torsion}, we analyze the growth of each non-cyclic torsion subgroup over the base field which helps us eliminate some of the groups given in Theorem \ref{Laska}. Analyzing the case where $E(K)$ is cyclic, we obtain more restrictions on the torsion subgroups in \S\ref{restrictions2} and we prove our main result Theorem \ref{main}.                        

\section{Background}
If $E$ is an elliptic curve given by the model 
   \[
    y^2=x^3+ax+b,
    \]
then the quadratic twist of $E$ by $d \in K$ is 
\[ 
E^{(d)}: y^2=x^3+ad^2x+bd^3 
\]
and it is isomorphic to $dy^2=x^3+ax+b$. Note that if $d$ is a square in $K$, then $E^{(d)}$ and $E$ are isomorphic over $K$.

To compute the rank of elliptic curves over quadratic fields, we make use of the following Lemma.

 \begin{lemma}[{\cite[Corollary 1.3]{Laska-Lorenz}}] \label{rank}
 Let $d$ be a square-free integer. Then for an elliptic curve $E/\Q$, the following holds:
  \[
   \text{rank}(E(\Q(\sqrt{d}))) = \text{rank}(E(\Q)) + \text{rank}(E^{(d)}(\Q)).
   \]
\end{lemma}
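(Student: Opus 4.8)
The natural approach is Galois descent via the eigenspace decomposition of $E(K)$ under $G = \Gal(K/\Q)$, where $K = \Q(\sqrt d)$ and $G = \{1,\sigma\}$ with $\sigma(\sqrt d) = -\sqrt d$. The group $G$ acts on the finitely generated abelian group $E(K)$, and since $\sigma$ is an involution I would pass to the finite-dimensional $\Q$-vector space $V = E(K)\otimes_\Z \Q$, on which $\sigma$ still acts as an involution and where $\dim_\Q V = \text{rank}(E(K))$. Over $\Q$ the idempotents $e_\pm = \tfrac12(1\pm\sigma)$ are available, giving a direct sum $V = V^+ \oplus V^-$ into the $(\pm 1)$-eigenspaces, so that $\text{rank}(E(K)) = \dim_\Q V^+ + \dim_\Q V^-$. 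This is exactly the point at which tensoring with $\Q$ is essential: the subgroups $E(K)^+ = \ker(\sigma - 1)$ and $E(K)^- = \ker(\sigma + 1)$ of $E(K)$ only span a subgroup of $2$-power index, not necessarily all of $E(K)$.

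The $+1$-eigenspace is easy to identify: the $\sigma$-fixed points of $E(K)$ are precisely the points rational over the fixed field $\Q$, and a short computation with $e_+$ shows $V^+ = E(\Q)\otimes\Q$, so $\dim_\Q V^+ = \text{rank}(E(\Q))$. The substance of the proof is the identification of $V^-$ with $E^{(d)}(\Q)\otimes\Q$. Over $K$ the element $d$ becomes a square, so the change of variables $(x,y)\mapsto(dx,\,d\sqrt d\,y)$ defines an isomorphism $\psi\colon E \to E^{(d)}$ over $K$ (sending the model $y^2 = x^3+ax+b$ to $y^2 = x^3 + ad^2x + bd^3$). Applying $\sigma$ to the coefficients of $\psi$ negates the single factor of $\sqrt d$, which gives the twisting relation $\psi^\sigma = \psi\circ[-1]$. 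From this I would check directly that for $Q\in E(K)$ one has $\psi(Q)\in E^{(d)}(\Q)$ if and only if $\sigma(Q) = -Q$; concretely, $\psi(Q) = (dX,\,d\sqrt d\,Y)$ is $\sigma$-fixed exactly when $X\in\Q$ and $\sigma(Y) = -Y$, which is precisely the condition $\sigma(Q) = -Q$. Hence $\psi$ restricts to a group isomorphism $E(K)^- \xrightarrow{\sim} E^{(d)}(\Q)$, and tensoring with $\Q$ gives $V^- \cong E^{(d)}(\Q)\otimes\Q$ with $\dim_\Q V^- = \text{rank}(E^{(d)}(\Q))$.

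Combining the three identities then yields the asserted additivity. I expect the only genuinely delicate step to be the verification of the twisting relation $\psi^\sigma = \psi\circ[-1]$ and the resulting bijection between the $\sigma$-fixed points of $E^{(d)}$ and the $\sigma$-anti-invariant points of $E$; everything else is linear algebra over $\Q$ together with the standard fact that rank equals the $\Q$-dimension after tensoring. No hypothesis is needed beyond $d$ being square-free, which guarantees that $\Q(\sqrt d)$ is a genuine quadratic field, and $E$ being defined over $\Q$.
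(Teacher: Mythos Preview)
Your argument is correct and is the standard proof of this well-known fact: decompose $E(K)\otimes_\Z\Q$ into $(\pm 1)$-eigenspaces for the Galois involution, identify the $+1$-eigenspace with $E(\Q)\otimes\Q$ and the $-1$-eigenspace with $E^{(d)}(\Q)\otimes\Q$ via the twist isomorphism $\psi$ satisfying $\psi^\sigma=\psi\circ[-1]$. The paper itself does not supply a proof of this lemma at all; it is simply quoted from \cite[Corollary~1.3]{Laska-Lorenz} and used as a black box for rank computations, so there is no ``paper's proof'' to compare against. Your write-up could be tightened into a formal proof by removing the hedging language (``I would\ldots'', ``I expect\ldots'') and stating the eigenspace identifications as lemmas, but the mathematics is complete as it stands.
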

 Throughout the paper, we compute the rank of $E(\Q)$ and $E^{(d)}(\Q)$ on Magma \cite{Magma} and use Lemma \ref{rank} to compute the rank of $E(\Q(\sqrt{d}))$. The torsion subgroup of a given elliptic curve is also computed using Magma. 
 
 Let $K$ be a number field. Given an elliptic curve $E/K$, we call a subgroup $C$ of $E(\mathbb{C})$ as
    $K$-rational if there exists an elliptic curve $E'/K$ and an isogeny 
           \[ 
               \phi: E \to E'
           \] 
  defined over $K$ such that $C=\ker{\phi}$. Equivalently, $C$ is $K$-rational if it is invariant under the action of $\Gal(\bar{K}/K)$.
  
   For our later purposes, we will state the following result of Newman, which tells us about the existence of $K$-rational subgroups of certain degrees in quadratic extensions of $K$. 
   
   Let $K=\Q(\sqrt{d})$ for $d=-1,-3$.
        
     \begin{theorem}[{\cite[Theorem~8]{Burton-odd}}] \label{modular curves-Burton}
                         Let $E/K$ be an elliptic curve. Then $E(\Co)$ has no $K$-rational cyclic subgroups of order   
                        $24,35$ or $45$ defined over a quadratic extension of $K$. Moreover, if $E$ is defined 
                        over $\Q(\sqrt{-3})$, $E$ does not 
                        have a $K$-rational cyclic subgroup of order $20, 21$ or $63$ defined over a quadratic 
                        extension of $\Q(\sqrt{-3})$.
       \end{theorem}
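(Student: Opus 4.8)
The plan is to translate the statement into one about points on the modular curves $X_0(n)$ and then to rule these points out. If $E/K$ has a cyclic subgroup $C$ of order $n$ stable under $\Gal(\bar K/L)$ for some extension $L$ with $[L:K]\le 2$, then the pair $(E,C)$ is an $L$-rational point $x$ of $X_0(n)$ whose image under the forgetful map $j\colon X_0(n)\to X(1)$ is $j(E)\in K$. Since $[L:\Q]\le 4$, it therefore suffices to show that $X_0(n)$ has no point of degree at most $2$ over $K$ lying above a $j$-invariant in $K$, other than cusps and the finitely many CM points accounted for separately. I would begin by recording the genera: $X_0(20)$, $X_0(21)$ and $X_0(24)$ have genus $1$, $X_0(35)$ and $X_0(45)$ have genus $3$ (with $X_0(35)$ hyperelliptic), and $X_0(63)$ has genus $5$.

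For the three genus-one curves I would argue via Mordell--Weil groups. Each has rational cusps, so it is an elliptic curve over $\Q$; using Lemma \ref{rank} and its iteration up the biquadratic tower I would compute the rank over $K$ and over each relevant quadratic extension $L=K(\sqrt m)$ in Magma. When the rank is $0$ the $L$-points are finite in number and can be listed explicitly, after which one checks that every non-cuspidal point either fails to have $j\in K$ or is a CM point by an order in $\mathcal{O}_K$; if a positive rank appears, the infinitely many points must be pushed through the $j$-map and the condition $j\in K$ used to cut them down.

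For the higher-genus curves the difficulty is that $X_0(35)$ is hyperelliptic, hence already has infinitely many quadratic points, so finiteness from Faltings is unavailable and the $j\in K$ constraint together with the Weil pairing must do the work: a cyclic $n$-isogeny factors through its coprime prime-power parts (for example $35=5\cdot7$, $45=9\cdot5$, $63=9\cdot7$), and the cyclotomic structure of $K$---here $\mu_4\subset\Q(i)$ and $\mu_3\subset\Q(\sqrt{-3})$---pins down the determinant of the relevant mod-$n$ representation and constrains the Galois characters on the component subgroups. For the non-hyperelliptic curves $X_0(45)$ and $X_0(63)$ I would pass to the Atkin--Lehner quotients, determine which of them are elliptic or hyperelliptic, compute the ranks of the bielliptic quotients over $K$, and apply a Chabauty-type analysis on the Jacobian (or on those quotients) to enumerate the finitely many quadratic points not arising from a hyperelliptic or positive-rank bielliptic source.

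The main obstacle is precisely this determination of all quadratic points on the hyperelliptic and bielliptic modular curves, most acutely on $X_0(63)$ of genus $5$: even where finiteness holds it is ineffective, so the real work lies in the explicit Mordell--Weil and Chabauty computations on Jacobians and Atkin--Lehner quotients over the quartic fields $L$, combined with the bookkeeping that separates cusps, CM points, and the condition $j\in K$. This last, field-sensitive step is what should account for the asymmetry in the statement, namely that $X_0(20)$, $X_0(21)$ and $X_0(63)$ are excluded only over $\Q(\sqrt{-3})$ and not over $\Q(i)$.
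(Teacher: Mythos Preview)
You have misread the hypothesis. The phrase ``$K$-rational cyclic subgroup'' means that $C$ is stable under $\Gal(\bar K/K)$, not merely under $\Gal(\bar K/L)$ for some quadratic $L/K$; the clause ``defined over a quadratic extension of $K$'' refers to where the individual points of $C$ live, not to the field of rationality of the pair $(E,C)$. Consequently $(E,C)$ already defines a $K$-point of $X_0(n)$, and the entire apparatus you propose for enumerating degree-$2$ points over $K$ --- Chabauty on genus-$5$ Jacobians over quartic fields, bielliptic Atkin--Lehner quotients, the hyperelliptic obstruction for $X_0(35)$ --- is aimed at a strictly harder problem than the one stated.

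The paper does not reprove the result but cites it from Newman, and Remark~\ref{explain Burton} records what that proof actually establishes: for $n=24,35,45$ (and for $n=20$ when $K=\Q(\sqrt{-3})$) the set $X_0(n)(K)$ consists only of cusps. For the genus-$1$ curves this is a Mordell--Weil computation over the single quadratic field $K$, not over its quadratic extensions; for the higher-genus curves one needs only the $K$-points, which is routine. The asymmetry between $\Q(i)$ and $\Q(\sqrt{-3})$ that you correctly flagged is explained simply by the Mordell--Weil ranks over the two fields: for instance $X_0(21)$ has rank $1$ over $\Q(i)$ (see the remark preceding Proposition~\ref{21}), which is exactly why the $n=21$ case over $\Q(i)$ is handled separately there and not absorbed into this theorem.
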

       
       \begin{remark}\label{explain Burton}
       The proof of Theorem~8 in \cite{Burton-odd} shows that the modular curve $X_0(n)(K)$ has 
       no non-cuspidal points for $n=24,35$ or $45$. Moreover, if $K=\Q(\sqrt{-3})$, then $X_0(20)(K)$ 
       also does not have any non-cuspidal points. 
       \end{remark}
       
   We will use the following result to determine the odd torsion subgroup in an elementary abelian  
      $2$-extension of a field. Remember that the multiplication by $n$ map on an elliptic curve is denoted by $[n]$.
        \begin{lemma}[{\cite[Corollary 1.3]{Laska-Lorenz}}] 
                     \label{Laska-Lorenz}
             Let $E$ be an elliptic curve over the field $k$ and let $L$ be an elementary abelian 
             $2$-extension of $k$ of degree $2^m$, i.e., the Galois group of $L/k$ is an elementary 
             abelian $2$-group of rank $m$. If 
             $E(k)_{2'}=\{ P \in E(k) \hspace{0.7mm} : [n]P=0 \hspace{1mm} \text{for  some odd} 
             \hspace{0.8mm} n\}$, 
             then
                   \[ 
                   E(L)_{2'} \simeq E^{(d_1)}(k)_{2'} \oplus \hdots \oplus E^{(d_m)}(k)_{2'},
                  \]
              for suitable $d_i : i=1,\ldots,m$ in $\mathcal{O}_k$. Furthermore, the image of each   
             summand $E^{(d)}(k)_{2'}$ is a $k$-rational subgroup of $E(L)$.
       \end{lemma}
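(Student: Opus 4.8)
The plan is to realize $E(L)_{2'}$ as a module over the group ring of $G := \Gal(L/k) \cong (\Z/2\Z)^m$ and decompose it using the idempotents attached to the characters of $G$. Write $M := E(L)_{2'}$. Since $M$ is a finite abelian group of odd order, multiplication by $2$ is invertible on $M$, so $M$ is naturally a module over $\Z[1/2][G]$. Because $|G| = 2^m$ is invertible in $\Z[1/2]$, the ring $\Z[1/2][G]$ splits as a product of copies of $\Z[1/2]$ indexed by the characters $\chi \colon G \to \{\pm 1\}$, with orthogonal idempotents $e_\chi = \frac{1}{2^m}\sum_{\sigma \in G}\chi(\sigma)\sigma$. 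Applying these to $M$ gives a canonical eigenspace decomposition $M = \bigoplus_\chi e_\chi M$, where $e_\chi M = \{P \in M : \sigma P = \chi(\sigma)P \text{ for all } \sigma \in G\}$.

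Next I would identify each eigenspace with the odd torsion of a quadratic twist. For the trivial character, $e_{1}M = M^G = E(k)_{2'}$. For a nontrivial $\chi$, its kernel is an index-two subgroup of $G$ whose fixed field is a quadratic extension $k(\sqrt{d_\chi})$ of $k$; the associated $d_\chi \in \mathcal{O}_k$ is the ``suitable'' element. The twist isomorphism $\psi \colon E^{(d_\chi)} \to E$ defined over $k(\sqrt{d_\chi})$, sending $(x,y) \mapsto (x, \sqrt{d_\chi}\,y)$ on the models of the Background section, satisfies the equivariance relation $\sigma \circ \psi = \chi(\sigma)\cdot(\psi \circ \sigma)$ for $\sigma \in G$, because $\sigma$ acts on $\sqrt{d_\chi}$ by the sign $\chi(\sigma)$ while negation on $E$ flips the $y$-coordinate. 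Hence $\psi$ carries the $k$-rational odd torsion $E^{(d_\chi)}(k)_{2'}$ isomorphically onto the $\chi$-eigenspace $e_\chi M$. Assembling these identifications over the characters yields the claimed decomposition of $E(L)_{2'}$ into pieces of the form $E^{(d_i)}(k)_{2'}$.

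Finally, for the $k$-rationality assertion, I would check that each summand $e_\chi M$ is stable under the full group $\Gal(\bar k/k)$. Given $\gamma \in \Gal(\bar k/k)$ and $P \in e_\chi M$: since $L/k$ is Galois, $\gamma$ restricts to some $\bar\gamma \in G$ and $\gamma P \in E(L)$; because $G$ is abelian, the actions of any $\sigma \in G$ and of $\gamma$ on points of $E(L)$ commute, so $\sigma(\gamma P) = \gamma(\sigma P) = \gamma(\chi(\sigma)P) = \chi(\sigma)\,\gamma P$, i.e.\ $\gamma P \in e_\chi M$. Thus $e_\chi M$ is $\Gal(\bar k/k)$-invariant, hence a $k$-rational subgroup of $E(L)$.

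The main obstacle I anticipate is the precise bookkeeping in the middle step: one must verify the equivariance of the twist isomorphism $\psi$ with the correct sign and confirm that it restricts to an isomorphism on the odd-torsion $k$-rational points, rather than merely an abstract isomorphism of eigenspaces, and one must match the indexing of the nontrivial characters---equivalently, of the quadratic subfields $k(\sqrt{d_i}) \subseteq L$---with the data $d_i \in \mathcal{O}_k$ appearing in the statement. The group-ring decomposition and the $k$-rationality verification are then formal.
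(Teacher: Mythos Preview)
The paper does not prove this lemma at all; it is quoted as Corollary~1.3 of Laska--Lorenz and used as a black box. So there is no ``paper's own proof'' to compare against. That said, your argument is the standard one and is essentially how the result is established in the original reference: decompose the odd-order $\Z[1/2][G]$-module $E(L)_{2'}$ via the character idempotents of the elementary abelian $2$-group $G$, and identify each $\chi$-eigenspace with $E^{(d_\chi)}(k)_{2'}$ through the twist isomorphism. Your equivariance check for $\psi$ and your verification of $k$-rationality via the commutativity of $G$ are both correct.

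One point worth flagging explicitly, which you half-noticed in your final paragraph: your decomposition naturally has $2^m$ summands, one for each character of $G$ (including the trivial one, which contributes $E(k)_{2'}=E^{(1)}(k)_{2'}$), whereas the lemma as stated in the paper lists only $m$ summands. This is an imprecision in the paper's transcription of the Laska--Lorenz result rather than a defect in your argument; the intended reading is that $E(L)_{2'}$ decomposes as a direct sum of pieces $E^{(d)}(k)_{2'}$ indexed by the quadratic subextensions (together with $d=1$), and in practice all but a few of these pieces vanish since $E(L)_{2'}$ sits inside some $E[N]\cong(\Z/N\Z)^2$. You should state the decomposition with the correct index set and then remark that only finitely many terms are nonzero.
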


 In \S\ref{restrictions}, we will use the correspondence between the lattices in $\Co$ and the elliptic curves over $\Co$ to describe some explicit isogenies.
 
 Now, let $L_1$ and $L_2$ be lattices inside $\Co$ such that $\alpha L_1 \subset L_2$ for some $\alpha$ in $\Co$. 
 Then the map $\Co/L_1 \to \Co/L_2$ induced by 
 \[ 
 z \mapsto \alpha z
 \]
  defines an isogeny in $\Hom(E_1,E_2)$ where $E_1,E_2$ are the elliptic curves corresponding to the lattices $L_1,L_2$, as in \cite[VI, Proposition~3.6b]{Silverman}. We will denote this isogeny by 
$[\alpha]_{E_1,E_2}$. When $E_1=E_2$, we will use $[\alpha]_{E_1}$ and we drop the domain and the target when it is clear from the context.

\section{Elementary Abelian $2$-Extensions}
For the rest of the paper, let $K$ be the quadratic field $\Q(\sqrt{d})$ for $d=-1, -3$ and let $F$ be the field
   \[
   K(\sqrt{d} : d \in \mathcal{O}_K).
   \]
  The field $F$ is called the maximal elementary abelian $2$-extension of $K$ since its Galois group is an elementary abelian $2$-group and it is maximal with respect to this property. Let $E/K$ be an elliptic curve given by $y^2=f(x)$. We make the following quick observations.
\begin{enumerate}
\item  If $f$ is irreducible in $K$, then it remains irreducible over the field $F$. Otherwise, $f$ has a 
        root $\alpha$ in $F$ and the degree of $K(\alpha)$ over $K$ is divisible by $3$ but it is not 
        possible since $K(\alpha)$ is contained in $F$.
        
 \item If $E(K)$ does not have a point of order $2$, then $E(F)$ cannot have a point of order $2$ 
      either. This simply follows from the fact that the points of order $2$ on the elliptic curve $E$ are  
      given by the zeros of 
      $f$, i.e., 
      \[  
      \{(\alpha,0) : \hspace{.5mm} f(\alpha)=0\}.
      \]
       Therefore, if $E(K)$ does not have a point of  
      order $2$, then $f$ is irreducible over $K$ and the claim follows from the first observation.

 \item  If the $j$-invariant of $E$ is not $0$ or $1728$, then for any elliptic curve 
       $E'/K$ isomorphic to $E$, we have $E(F) \simeq E'(F)$ since $E$ and $E'$ are isomorphic over a 
       quadratic extension of $K$ (hence also over $F$). 
       
\item  Let $L$ be a finite elementary abelian extension of $K$. Then for a prime $\mathfrak{P} \in 
       \mathcal{O}_L$ above $\mathfrak{p}$ in $\mathcal{O}_K$, the residue field of $\mathcal{O}_L$ is 
       at most a quadratic extension of the residue field of $\mathcal{O}_K/\mathfrak{p}$ since the 
       Galois group of the finite field $\F_{q^n}$ over $\F_q$ is cyclic for every $n$ and $q$. 
 
   \end{enumerate}          
         

\section{Rational Points on the Modular Curve $Y_0(n)$}

     \begin{prop} \label{modular curves}
                   Let $E/K$ be an elliptic curve. Then $E(\Co)$ has no $K$-rational cyclic subgroups of 
                   order  
                           $32, 36$ or $64$.
      \end{prop}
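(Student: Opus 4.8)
The plan is to translate the statement into the language of the modular curves $X_0(n)$ and to reduce the three cases to two genus-one computations. A $K$-rational cyclic subgroup of order $n$ on an elliptic curve $E/K$ is exactly a non-cuspidal $K$-rational point of $X_0(n)$, so it suffices to show that $X_0(n)(K)$ consists only of cusps for $n=32,36,64$. Since $32\mid 64$, the forgetful morphism $X_0(64)\to X_0(32)$ sending $(E,C_{64})$ to $(E,C_{32})$, where $C_{32}\subset C_{64}$ is the subgroup of order $32$, is defined over $\Q$ and carries non-cuspidal points to non-cuspidal points. Hence if $X_0(32)(K)$ has no non-cuspidal points, neither does $X_0(64)(K)$, and the case $n=64$ follows from the case $n=32$. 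The two genus-one cases $n=32$ and $n=36$ must then be treated directly, since every intermediate curve $X_0(16),X_0(12),X_0(18)$ has genus $0$ and thus infinitely many points.

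Next I would handle $X_0(32)$, which has genus one; as an elliptic curve over $\Q$ it is the curve $32a$, with $j$-invariant $1728$ and complex multiplication by $\Z[i]$, admitting a model $y^2=x^3+ax$. First I would compute its Mordell--Weil rank over $K$ using Lemma \ref{rank}. For $K=\Q(i)$ the quadratic twist by $-1$ of a curve $y^2=x^3+ax$ is $\Q$-isomorphic to the curve itself, so $\text{rank}(X_0(32)(\Q(i)))=2\,\text{rank}(X_0(32)(\Q))=0$; for $K=\Q(\sqrt{-3})$ the rank equals $\text{rank}(X_0(32)(\Q))+\text{rank}(X_0(32)^{(-3)}(\Q))$, which a Magma computation shows to vanish. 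With rank zero the group $X_0(32)(K)$ is finite, and I would compute its torsion and compare it with the cuspidal locus, verifying that every $K$-rational point is a cusp.

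The case $X_0(36)$ proceeds along the same lines: it is a genus-one curve, namely the elliptic curve $36a$ with $j=0$ and complex multiplication by $\Z[\zeta_3]$. Again I would establish via Lemma \ref{rank} that the rank over $K$ vanishes, using the $(-1)$-twist for $\Q(i)$ and the $(-3)$-twist for $\Q(\sqrt{-3})$, each of which a Magma computation confirms has rank $0$ over $\Q$. Thus $X_0(36)(K)$ is finite, and I would then check directly that all of its $K$-rational points are cusps.

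The main obstacle is twofold. First, because each curve has complex multiplication by an order in the very field $K$ over which we work, the quadratic twists behave atypically: for $j=1728$ over $\Q(i)$ the $(-1)$-twist is already $\Q$-isomorphic to the curve, and for $j=0$ over $\Q(\sqrt{-3})$ the $(-3)$-twist becomes isomorphic to the curve over $K$ (via $u=\sqrt{-3}$, since $u^6=-27$). The rank computations must therefore be arranged carefully so that the CM does not inflate the rank and one genuinely obtains rank $0$ over $K$. Second, once finiteness is secured, the substantive step is to pin down the $K$-rational cusps and to confirm that the finite Mordell--Weil group contains no non-cuspidal point; concretely this amounts to matching the order of $X_0(n)(K)_{\text{tors}}$ against the number of $K$-rational cusps.
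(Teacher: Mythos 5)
Your proposal follows the paper's proof essentially verbatim: the paper likewise works with the genus-one models $y^2=x^3+4x$ and $y^2=x^3+1$ of $X_0(32)$ and $X_0(36)$, determines the finitely many $K$-rational points (computed on Magma, with the cusps and their fields of definition counted via Ogg's theorem) and checks they are all cuspidal, and disposes of order $64$ by the same reduction to order $32$. The only cosmetic difference is that you make explicit the rank-zero computation via Lemma \ref{rank} and quadratic twists, a step the paper subsumes into its Magma point count.
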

      \begin{proof} 
             An affine model for the modular curve $X_0(32)$ is given in \cite[p~503]{Yang} as
                              \[ 
                               y^2=x^3+4x.
                               \]
             Ogg's theorem \cite{Ogg} tells us that $X_0(32)$ has the following cusps.                       
                    \[
                           \begin{tabular}{  m{1.8cm} | m{1cm}| m{1cm} | m{1cm} | m{1cm}| m{1cm} | m{1cm} } 

                                       d & 1 &2 & 4& 8 &16 &32 \\   \hline
                                      $ \phi(d, 32/d)$ & 1 & 1 & 2 &2 &1 &1 \\ 
                           \end{tabular}
                       \]  
              We see that $X_0(32)$ has $8$ cusps; four of them defined in $\Q$ and the other four
              defined in $\Q(i)$. We compute on Magma that $X_0(32)$ has only $8$ points over $\Q(i)$ and has $4$ points  over $\Q(\sqrt{-3})$, hence they are all cuspidal.       
            
              This shows that there are no elliptic curves $E/K$ with a $K$-rational cyclic subgroup of order 
              $32$.
              
              Similarly,  an affine model for the modular curve $X_0(36)$ is also given in \cite[p~503]{Yang} as
                    \[
                                    y^2=x^3+1
                    \]
              We again apply Ogg's method to compute the cusps of the modular curve $X_0(36)$.                  \[
                            \begin{tabular}{  m{1.8cm} | m{.7cm}| m{.7cm} | m{.7cm} | m{.7cm}| m{.7cm} | 
                            m{.7cm} | m{.7cm} | m{.7cm} | m{.7cm} } 
                                            d & 1 & 2 & 3& 4 &6 &9 & 12& 18 & 36 \\ \hline
                                   $\phi(d, 36/d)$ & 1 & 1 & 2 &1 &2 &1 &2 & 1 &1  \\ 
                           \end{tabular}
                    \]  
              The table shows that $X_0(36)$ has $12$ cusps; six of them defined over $\Q$ and the 
              remaining six
              are defined over $\Q(\sqrt{-3})$. We find that it has $6$ points over $\Q(i)$ and  $12$ points over $\Q(\sqrt{-3})$.                          
          
             This shows that there are no non-cuspidal $K$-points on $X_0(36)$ and so
             there are no elliptic curves $E/K$ with a $K$-rational cyclic subgroup of order $36$.     
           
          Finally, $E(\Co)$ has no $K$-rational cyclic subgroup of order $64$ since otherwise it would induce a 
           $K$-rational cyclic subgroup of order $32$.
      \end{proof}  
   
   Now, we will study the $K$-rational points on $X_0(20)$ and $X_0(27)$ to prove Proposition \ref{odd} and also to prove the results of \S\ref{restrictions} later.
 \subsection{The modular curve $X_0(20)$.} \label{modular curve 20}
     
      Let $K=\Q(i)$.
      An equation for the modular curve $X_0(20)$ is given in \cite{Yang} as
           \[ 
           y^2=(x+1)(x^2+4).
           \] 
        It is known that there are no cyclic $20$-isogenies defined over $\Q$, see Theorem~2.1 in \cite{Laska-Lorenz}, hence $X_0(20)(\Q)$ has only 
        cusps. Oggs's method tells us that there are only $6$ of them. Then we compute on Magma that $X_0(20)(K)$ has $12$ points and they are listed as
                                    \[
                       \{\mathcal{O}, (-1,0), (0,\pm 2), (4,\pm 10), ( (\pm2i,0),(2
                       i-2,\pm(2i+4)),(-2i-2,\pm(2i-4))\}. 
                   \]
       This shows that there are $6$ 
            non-cuspidal points on $X_0(20)(K)$. We will study these points in more detail in Proposition \ref{20}.
            
 \subsection{The modular curve $X_0(27)$.}  \label{27}
       A model for the modular curve $X_0(27)$ is given in \cite{Yang} as
                          \[  
                          y^2+y =x^3-7.    
                           \]
       Again by Ogg's method, we find that 
       $X_0(27)$ has $6$ cusps; four of them defined over $\Q(\sqrt{-3})$ and the other two defined over 
       $\Q$. 
       
     Now let $K=\Q(\sqrt{-3})$. We compute that $E(K)$ has $9$ points and hence there are $3$ non-cuspidal points on $X_0(27)$ defined over $K$.
       
  Similarly if $K=\Q(i)$, the group $E(K)$ is also finite and it has $3$ points which shows that there is only one non-cuspidal point on $X_0(27)$ defined over $\Q(i)$, in 
        fact defined over $\Q$.
       
       Let $E_1$ be the elliptic curve associated with the lattice $[1, \frac{1+\sqrt{-27}}{2}]$. Then
                    \[
                   [ \sqrt{-27}]: E_1\to E_1  
                    \] 
            and        
                    \[
                    [ \frac{9\pm \sqrt{-27}}{2}]: E_1 \to E_1 
                      \]
           define endomorphisms of $E_1$ and they are cyclic of degree $27$. Moreover $E_1$ has complex multiplication by the order $\Z[\frac{1+\sqrt{-27}}{2}]$ and it is given in 
           \cite[p.261]{Cox} that
                  \[ 
                  j(E_1)=-2^{15}5^33. 
                  \]    
Therefore the endomorphisms listed above are defined over $\Q(\sqrt{-3})$ by \cite[Theorem~2.2]{silverman-advance}.  We find a model for $E_1$ in the database \cite{LMFDB}; the elliptic curves over $\Q$ with 
  complex multiplication, as
  \[ 
  y^2+y=x^3-270x-1708.
  \]
  Hence any elliptic curve defined over $K$ with a $K$-rational cyclic subgroup of order $27$ is a quadratic twist of $E_1$.

\section{Odd Torsion} \label{odd section}
 \noindent Using Lemma \ref{Laska-Lorenz}, we see that the odd primes dividing the order of a point in $E(F)$ can only be $3,5$ or $7$. We will prove in Proposition \ref{21-2} and Proposition \ref{21} that $E(F)$ does not have a point of order $21$ for $K=\Q(\sqrt{d})$ for $d=-1,-3$.

       \begin{prop} \label{21-2} Let $E$ be an elliptic curve defined over $K=\Q(\sqrt{-3})$. Then $E(F)$ has no point of order $21$.
       \end{prop}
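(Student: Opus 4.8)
The plan is to show that a point of order $21$ in $E(F)$ would, after a suitable quadratic twist, equip $E$ with a $K$-rational cyclic subgroup of order $21$ all of whose points lie in a quadratic extension of $K$ --- precisely the configuration forbidden by Theorem \ref{modular curves-Burton} when $K=\Q(\sqrt{-3})$.

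First I would assume for contradiction that $E(F)$ contains a point of order $21$. Taking multiples, $E(F)$ then contains a point of order $7$ and a point of order $3$, and these lie in a single finite elementary abelian $2$-extension $L$ of $K$. Since they have odd order they lie in $E(L)_{2'}$, so Lemma \ref{Laska-Lorenz} gives $E(L)_{2'} \simeq E^{(d_1)}(K)_{2'} \oplus \cdots \oplus E^{(d_m)}(K)_{2'}$. As $7$ is prime, the tuple representing the order-$7$ point must have a component of order $7$; hence some twist $E^{(d_i)}(K)$ contains a point $P_7$ of order $7$.

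Next I would reduce to the case where the order-$7$ point is rational over $K$ itself. Because $E^{(d_i)}$ is isomorphic to $E$ over $K(\sqrt{d_i})\subseteq F$, we have $E^{(d_i)}(F)\simeq E(F)$, so it suffices to prove the statement for $E^{(d_i)}$. Replacing $E$ by $E^{(d_i)}$, I may assume $E(K)$ has a point $P_7$ of order $7$ while $E(F)$ still contains a point of order $3$. Applying Lemma \ref{Laska-Lorenz} a second time, the order-$3$ point forces some twist $E^{(e)}(K)$ to contain a point $P_3$ of order $3$, so $E^{(e)}$ has a $K$-rational cyclic subgroup of order $3$. A quadratic twist alters the $\Gal(\bar K/K)$-action on the $3$-torsion only by the scalar quadratic character, which preserves every subgroup; therefore $E$ itself has a $K$-rational cyclic subgroup $C_3$ of order $3$.

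Finally I would combine the two subgroups and control their field of definition. The group $C_7=\langle P_7\rangle$ is defined over $K$, while $C_3$, being a $K$-rational cyclic group of order $3$, has its two nontrivial points permuted by $\Gal(\bar K/K)$ through a quotient of order at most $2$, so $C_3$ is defined over at most a quadratic extension $K'$ of $K$. Hence $C_7\oplus C_3$ is a $K$-rational cyclic subgroup of $E$ of order $21$ whose points all lie in $K'$, contradicting Theorem \ref{modular curves-Burton}. The main obstacle is exactly this last field-of-definition bookkeeping: the order-$7$ and order-$3$ subgroups produced directly from two different twists would only be defined over a biquadratic extension, to which Theorem \ref{modular curves-Burton} does not apply. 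The twisting reduction that makes $P_7$ rational over $K$ is what pushes the whole order-$21$ subgroup down into a quadratic extension, and arranging this cleanly is the crux of the argument.
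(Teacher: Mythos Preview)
Your proof is correct and follows essentially the same approach as the paper's: use Lemma~\ref{Laska-Lorenz} to pass to a twist so that one of the two prime-order points is $K$-rational, observe that the other then generates a $K$-rational cyclic subgroup defined over a quadratic extension of $K$, and invoke Theorem~\ref{modular curves-Burton}. The only cosmetic difference is that the paper arranges the order-$3$ point to be $K$-rational and the order-$7$ point to come from a twist, whereas you do the reverse; this is symmetric and immaterial.
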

       
       \begin{proof} Assume $E(F)$ has a subgroup of order $21$. Then by Lemma \ref{Laska-Lorenz}, 
       replacing $E$ by a twist if necessary, we may assume that $E(K)$ has a point of order $3$ and 
       $E^{(d)}(K)$ has a point of order $7$ for 
       some $d$ in $\mathcal{O}_K$, hence $E$ has a subgroup of order $21$ over a quadratic extension 
       of $K$ and it is $K$-rational by Lemma \ref{Laska-Lorenz}. Theorem \ref{modular curves-Burton} shows that it is not possible.
       \end{proof}
       
       \begin{remark} 
              The modular curve $X_0(21)$ is an elliptic curve with Mordell-Weil rank $1$ over $\Q(i)$. 
           Hence $X_0(21)$ can not be immediately used to determine whether an elliptic curve $E$ can 
           have a subgroup of order $21$ defined over the field $F$. 
       \end{remark}
    
    We will need the following result in the proof of Proposition \ref{21}.
     
     \begin{theorem}[{\cite[Theorem7]{Burton-odd}}] \label{Burt-0,1728}
                         Let $K$ be a quadratic field and let $E/K$ be an 
                         elliptic curve. If $j(E) = 0$ and $p > 3$ is a prime, then $E(K)_{\text{tor}}$ has no 
                         element of order $p$. If $j(E) = 1728$ and $p > 2$ is a prime, then $E(K)_{\text{tor}}$ 
                         has no element of order p.
       \end{theorem}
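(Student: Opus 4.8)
The plan is to exploit the complex multiplication forced by the $j$-invariant. If $j(E)=0$ then $E$ has CM by $\Z[\omega]$ with $\omega=\zeta_3$, so $M:=\Q(\sqrt{-3})$ is the CM field and $\Aut(E_{\bar K})\cong\mu_6$; if $j(E)=1728$ then $E$ has CM by $\Z[i]$, $M:=\Q(i)$, and $\Aut(E_{\bar K})\cong\mu_4$. In either case write $\mathcal O$ for the CM order, fix $\beta\in\mathcal O$ with $\bar\beta=-\beta$ and $\gcd(N(\beta),p)=1$ (take $\beta=\sqrt{-3}$, resp.\ $\beta=i$), and recall from \cite[Theorem~2.2]{silverman-advance} that every endomorphism is defined over $KM$, with $\Gal(KM/K)$ acting on $\mathcal O$ through the nontrivial automorphism $\beta\mapsto\bar\beta$ of $M/\Q$ whenever $M\not\subseteq K$. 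Suppose toward a contradiction that $P\in E(K)$ has order $p$.

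First I would treat the generic case $M\not\subseteq K$. Set $R:=[\beta]P$; since $\gcd(N(\beta),p)=1$ the endomorphism $[\beta]$ is injective on $E[p]$, so $R$ again has order $p$, and $R$ is defined over $KM$. Both $P$ and $R$ are fixed by $\Gal(\bar K/KM)$ (as $P\in E(K)$ and $[\beta]$ is defined over $KM$). Choosing $\tau\in\Gal(\bar K/K)$ restricting to the nontrivial element of $\Gal(KM/K)$, one computes $\tau(R)=[\bar\beta]\tau(P)=[-\beta]P=-R$, whereas $\tau$ fixes $P$; hence $R\notin\langle P\rangle$, for otherwise $R=cP$ would give $-R=\tau(R)=cP=R$, forcing $2cP=0$ and $c=0$ since $p>2$. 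Thus $P$ and $R$ are independent and $E[p]\subseteq E(KM)$, so by the Weil pairing $\Q(\zeta_p)\subseteq KM$ and $p-1=[\Q(\zeta_p):\Q]\le[KM:\Q]\le 4$, i.e.\ $p\le 5$. For $p=5$, the inclusion $\Q(\zeta_5)\subseteq KM$ with $[KM:\Q]=4$ forces $KM=\Q(\zeta_5)$, which is impossible because $M\subseteq KM$ while the unique quadratic subfield of $\Q(\zeta_5)$ is $\Q(\sqrt5)$; and $p=3$ (relevant only for $j=1728$) is excluded because $3$ is inert in $M=\Q(i)\subseteq KM$, so $E$ is supersingular there and inertia acts on $E[3]$ with order $8$, incompatible with $E[3]\subseteq E(KM)$.

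Next I would treat $M\subseteq K$, i.e.\ $K=M$, where $\Gal(\bar K/K)$ acts on $E[p]$ through the Cartan $C=(\mathcal O/p\mathcal O)^\times$. If $p$ is inert in $M$ then $C\cong\F_{p^2}^\times$ acts freely on nonzero vectors, so a nonzero fixed vector $P$ forces the Galois image to be trivial, $E[p]\subseteq E(K)$, whence $\Q(\zeta_p)\subseteq K$ and $p-1\le 2$, contradicting $p>3$. The remaining case is $p$ split in $M$ (ordinary reduction), where $C\cong\F_p^\times\times\F_p^\times$ and fixing $P$ only forces one of the two isogeny characters $\bar\rho_{\mathfrak p},\bar\rho_{\bar{\mathfrak p}}$ to be trivial, with no contradiction visible globally. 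Here I would pass to the completion at a prime above $p$ and use the connected--étale sequence: for good ordinary reduction the formal-group line $E[\mathfrak p]$ carries the cyclotomic action of inertia $I_{\mathfrak p}$, of order $p-1>1$, while the étale line $E[\bar{\mathfrak p}]$ is unramified. Since the roles of $\mathfrak p$ and $\bar{\mathfrak p}$ swap when one instead localizes at $\bar{\mathfrak p}$, whichever eigenline equals $\langle P\rangle$ meets inertia nontrivially, so the corresponding global character cannot be trivial — the desired contradiction. The ramified primes of $M$, namely $p=3$ for $j=0$ and $p=2$ for $j=1728$, fall outside the stated ranges and need not be analyzed.

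The main obstacle is precisely this last configuration: $K$ equal to the CM field with $p$ split, where the global symmetry furnished by the conjugate endomorphism $[\beta]$ degenerates and one must instead extract the contradiction from the local ramification of the cyclotomic character on the formal group. The only other point demanding care is the endgame for the small primes $p=5$ (and $p=3$ in the $j=1728$ case), both of which are dispatched by the field-theoretic and supersingular observations indicated above.
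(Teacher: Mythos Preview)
The theorem as stated in the paper --- for an arbitrary quadratic field $K$ --- is actually false, so no proof can succeed. Over $K=\Q(\sqrt{3})$ the curve $E:y^2=x^3+(3+2\sqrt{3})x$ has $j(E)=1728$, and $(1,\,1+\sqrt{3})$ is a point of order~$3$: with $a=3+2\sqrt{3}$ one checks $(1+\sqrt{3})^2=4+2\sqrt{3}=1+a$ and $\psi_3(1)=3+6a-a^2=0$. The paper does not give its own proof here; it merely cites \cite{Burton-odd}, whose subject is the quadratic \emph{cyclotomic} fields, so the intended hypothesis is almost certainly $K\in\{\Q(i),\Q(\sqrt{-3})\}$, and the phrase ``Let $K$ be a quadratic field'' is a misquotation.

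Your argument breaks exactly where it must, at the step $p=3$, $j=1728$, $M\not\subseteq K$. The assertion that ``inertia acts on $E[3]$ with order $8$'' is unjustified: the heuristic that good supersingular reduction forces the non-split Cartan of order $p^2-1$ is a statement about $E/\Q_p$, and it fails once the base is ramified at $p$ or the reduction is additive. In the counterexample above one has $E[3]\subset E(\Q(\zeta_{12}))$ (apply $[i]:(x,y)\mapsto(-x,iy)$ to $P$), so the image of $G_K$ on $E[3]$ has order at most $2$ and no contradiction arises. For the intended fields your strategy does go through: when $K=\Q(i)$ the case $p=3$ falls under $K=M$ and your inert-Cartan argument applies, while for $K=\Q(\sqrt{-3})$ a $3$-torsion point on $y^2=x^3+ax$ would force $\sqrt{3}\in K$, which already fails. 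One residual gap worth closing: in the split-Cartan endgame you assume good reduction at the primes above $p$, but $E$ may well have additive reduction there (e.g.\ $y^2=x^3+\pi x$ at a prime $\pi\mid p$); since the $K$-isomorphism class depends on $a\bmod (K^\times)^4$ (resp.\ $(K^\times)^6$), you should explain how to pass to a model with good reduction or treat the additive case directly.
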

       \begin{proof} The proof uses the techniques 
                                from \cite{Lemmermeyer}.           
       \end{proof}

             \begin{prop}\label{21}
               Let $E$ be an elliptic curve defined over $\Q(i)$. Then $E(F)$ does not have a subgroup of 
               order $21$.
       \end{prop}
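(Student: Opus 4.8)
The plan is to open exactly as in Proposition \ref{21-2} and then supply the extra argument that is needed precisely because, as noted in the Remark following Proposition \ref{21-2}, $X_0(21)$ has positive rank over $\Q(i)$. Assuming $E(F)$ contains a subgroup of order $21$, Lemma \ref{Laska-Lorenz} lets me replace $E$ by a twist so that $E(K)$ has a point $P_3$ of order $3$ and $E^{(d)}(K)$ has a point of order $7$ for some $d\in\mathcal{O}_K$; together these give a $K$-rational cyclic subgroup of order $21$, i.e.\ a $K$-point of $X_0(21)$. Since $X_0(21)(\Q(i))$ is infinite, I cannot conclude on $X_0(21)$ alone and must exploit the finer structure carried by the Laska--Lorenz decomposition.

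First I would dispose of the two CM values of the $j$-invariant using Theorem \ref{Burt-0,1728}. If $j(E)=1728$, then every quadratic twist of $E$ again has $j=1728$, and Theorem \ref{Burt-0,1728} forbids any point of odd prime order on a curve over the quadratic field $K$ with $j=1728$; in particular $E(K)$ has no point of order $3$, a contradiction. If $j(E)=0$, the same theorem forbids points of order $p>3$, so $E^{(d)}(K)$ cannot carry a point of order $7$, again a contradiction. Hence I may assume $j(E)\notin\{0,1728\}$, so that $\Aut_{\bar K}(E)=\{\pm 1\}$ and every twist of $E$ is quadratic.

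Next I would repackage the data on a single modular curve. Set $E'=E^{(d)}$, so $E'(\Q(i))$ has a point of order $7$. Under the isomorphism $E\cong E'$ defined over $\Q(i,\sqrt d)$, the rational point $P_3\in E(\Q(i))$ corresponds to a $3$-torsion point $Q$ of $E'$ defined over $\Q(i,\sqrt d)$ on which the nontrivial element of $\Gal(\Q(i,\sqrt d)/\Q(i))$ acts by $\pm 1$; in particular $x(Q)\in\Q(i)$, so $\langle Q\rangle$ is a $\Q(i)$-rational cyclic subgroup of order $3$. Thus $E'$ yields a $\Q(i)$-point of the modular curve $X$ attached to $\Gamma_1(7)\cap\Gamma_0(3)$, which is a degree-$3$ cover of $X_0(21)$. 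Because the cover $X_1(7)\to X_0(7)$ is ramified, $X$ has genus at least $2$, so $X(\Q(i))$ is finite by Faltings.

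Finally I would compute $X(\Q(i))$ explicitly, either from a rational parametrization of $X_1(7)$ together with the condition that the $3$-division polynomial of the parametrized curve have a root in $\Q(i)$ (giving a plane model amenable to Magma, as in the other modular-curve computations of the paper), or by a Chabauty--Coleman argument on the degree-$3$ cover of $X_0(21)$, and then check that every such point is a cusp or has $j\in\{0,1728\}$. Since those $j$-values were already excluded, no admissible $E'$ exists, contradicting the existence of an order-$21$ subgroup. The main obstacle is exactly this last step: the positive rank of $X_0(21)$ over $\Q(i)$ is what forces the passage to the higher-genus cover $X$, and the real work lies in pinning down the genus of $X$ and determining all of its rational points over $\Q(i)$ rather than over $\Q$.
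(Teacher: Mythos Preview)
Your plan is essentially the paper's. After disposing of $j\in\{0,1728\}$ via Theorem \ref{Burt-0,1728}, the paper does exactly your first suggestion: it takes Kubert's one-parameter family $E_t$ on $X_1(7)$ (so it normalizes to a $K$-rational $7$-point rather than a $3$-point, which is the same setup with the roles swapped), writes down the quartic third division polynomial $\psi(x,t)$, and observes that a $K$-rational cyclic $3$-subgroup forces $\psi(x,t)=0$ to have a solution with $x,t\in K$. This plane curve is birational to a genus-$3$ hyperelliptic curve $\tilde C:y^2=f(u)$ with $f=gh$, $\deg g=2$, $\deg h=6$, which is a model for your $X$.

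Where the paper goes beyond your sketch is the method for determining $\tilde C(\Q(i))$, and this is where the content lies. A two-cover descent (twists indexed by square-free divisors of $\mathrm{Res}(g,h)=112$ in $\Z[i]$) kills all covers except $d=1$ and $d=7i$ by local arguments; for $d=7i$ the Jacobian of $z^2=7i\,h(u)$ has rank $0$ and trivial torsion. For $d=1$, the genus-$2$ curve $C_2:z^2=h(u)$ has Jacobian of rank $2$ over both $\Q$ and $\Q(i)$, so your proposed Chabauty--Coleman alternative does \emph{not} apply on this piece. Instead the paper shows $J(C_2)(\Q(i))$ is torsion-free and, comparing ranks, equals $J(C_2)(\Q)$; this forces $C_2(\Q(i))=C_2(\Q)$, then $\tilde C(\Q(i))=\tilde C(\Q)$, and one finishes by invoking the known result over $\Q$ from \cite{Laska-Lorenz}. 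So your outline is correct, but the specific obstacle---rank equal to genus on the key cover---is overcome by a base-change comparison rather than by $p$-adic methods.
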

       \begin{proof} Let $E$ be an elliptic curve defined over $K=\Q(i)$ and suppose that $E(F)$ has a 
                subgroup of order $21$. We may assume that $E(K)$ has a  point of order $7$ (by replacing 
                with a twist if necessary) by Lemma \ref{Laska-Lorenz}. It can be found in 
                \cite[Table 3, p~217]{Kubert} that an elliptic curve with a point of order $7$ is isomorphic 
                to
                       \begin{equation} \label{parametrization}
                                 E_t : y^2+(1-c)xy-by=x^3-bx^2
                        \end{equation} 
               where $b=t^3-t^2$ and $c=t^2-t$ for some $t\neq 0,1$ in $K$. Therefore $E$ is isomorphic 
               to $E_t$ for some $t \in K$. Moreover, either the j invariant of $E$ 
             is $0$ or $1728$, or the isomorphism is defined over a quadratic extension of $K$. By 
             \cite[Theorem 7]{Burton-odd}, we know that an elliptic curve with j invariant $0$ or $1728$ can 
             not have a $K$-point of order $7$, hence $E$ is a quadratic twist of $E_t$ and if 
             $E(F)$ has a point of order $21$, then so has $E_t$; hence we may assume
             $E$ is $E_t$.
 
            We compute the third division polynomial of the elliptic curve $E_t$ as the following.
                           \begin{multline*}\psi(x,t)= x^4 + \left(\frac{1}{3}t^4 - 2t^3 + t^2 +     \frac{2}{3}t + 
                                                                     \frac{1}{3}\right)x^3  + \left( t^5 - 2t^4 + t^2 \right)x^2 \\ +  
                                                                     \left(t^6 - 2t^5 + t^4\right)x + \left(- \frac{1}{3}t^9 + t^8  
                                                                     - t^7 + \frac{1}{3}t^6\right). 
                            \end{multline*}
            Now $E$ has a point $P$ of order $3$ defined in a quadratic extension of $K$ and the 
            subgroup generated by $P$ is $K$-rational by Lemma \ref{Laska-Lorenz}. We claim that $x(P)
            $, the $x$-coordinate of the point $P$ must be in $K$ which forces the equation $\psi(x,t))=0$ 
            to have a root in $K$. The only points in $\langle P \rangle$ are $P$, $-P$ and the point at 
            infinity. Since $x(P)=x(-P)$, it follows that $\sigma(x(P))=x(\sigma(P))=x(P)$ and hence $x(P)$ 
            is invariant under the action of the Galois group.
            Now, the pair $(E,P)$ corresponds to a point $(x_0,s)$ on the curve $C$ given by the equation         
                         \[
                          C: \psi(x,t)=0 
                          \] 
            where $E=E_s$ and $x_0$ is the $x-$coordinate of the point $P$ of order $3$. 
            Therefore it is enough to find $C(K)$, the set of $K$-points on $C$. The curve $C$ is birational 
            (over $\Q$) to the hyperelliptic curve  
                    \[ 
                    \tilde{C}: y^2=f(u)
                    \]  
                       where 
                                 \[
                                  f(u)=u^8-6u^6+4u^5+11u^4-24u^3+22u^2-8u+1
                                  \]
            and $C$ and $\tilde{C}$ are isomorphic over $K$ outside the set of singularities which is $\{(0,0),(0,1) \}$. Note that we require $t$ to be different than $0$ or $1$ in 
            (\ref{parametrization}), hence it is enough to find $\tilde{C}(K)$. The polynomial $f(u)$ factors 
            as 
                        \[ 
                        f(u)=(u^2 - u + 1)(u^6 + u^5 - 6u^4 - 3u^3 + 14u^2 - 7u + 1)
                         \]
            Let $g$ and $h$ denote the factors of $f$:
                         \[ 
                         g(u)=u^2 - u + 1 
                         \] 
                     \[
                      h(u)=u^6 + u^5 - 6u^4 - 3u^3 + 14u^2 - 7u + 1.
                      \] 
            Using the Descent Theorem (\cite[Theorem 11]{Stoll}; one can also look at Example $9$ and 
            $10$ in \cite{Stoll}.), it is enough to find the points on the unramified coverings $\tilde{C}_d$ of 
            $\tilde{C}$, which are given as the intersection of two equations in $\mathbb{A}^3$:
                           \[  
                           w^2=d g(u)=d(u^2 - u + 1) 
                           \] 
            and  
                    \[ 
                    z^2=d h(u)=d( u^6 + u^5 - 6u^4 - 3u^3 + 14u^2 - 7u + 1) 
                    \] 
            where $d$ is a square-free number in $\mathcal{O}_K$ dividing the resultant of 
            $g(u)$ and  
            $h(u)$, which is $112$. Therefore $d$ belongs to the set
                      \[ 
                      \{1,i, (1+i),7,i(1+i),7(1+i),7i, 7i(1+i)\}
                       \] 
            If we exclude the cases $d=1$ and $d=7i$, reduction of $d$ takes values $\{2,3,2,1,1,2\}$ and 
            $\{3,4,2,2,3,4\}$ with respect to the ideal $(2-i)$ and $(2+i)$ respectively. We will reduce the 
            curve                   
                     \[
                     z^2=d(u^6 + u^5 - 6u^4 - 3u^3 + 14u^2 - 7u + 1)  
                     \] 
            at $(2-i)$ for the values of $d=i,(1+i),7,7i(1+i)$ and similarly reduce it at $(2+i)$ for 
             the values 
             $d=i(1+i),7(1+i)$.
            In each case described above, $z^2=d h(u)$ reduces to either
                     \[
                     z^2=2(u^6 + u^5 - 6u^4 - 3u^3 + 14u^2 - 7u + 1).
                      \] 
                 or    
                  \[
                     z^2=3(u^6 + u^5 - 6u^4 - 3u^3 + 14u^2 - 7u + 1).
                      \]   
            A quick computation on Magma shows that neither of these equations has a
            solution over 
            $\F_5$, hence there are no $K$-points on $\tilde{C}_d$ for $d\neq 1,7i$. 
           
           Let $d=7i$. Magma computes $0$ as an upper bound for the Mordell-Weil rank of the Jacobian 
           of the curve 
                    \[
                    z^2=7i\left(u^6 + u^5 - 6u^4 - 3u^3 + 14u^2 - 7u + 1\right), 
                    \]  
            hence the rank of the Jacobian of $z^2=7i h(u)$ is zero. Moreover, we compute on 
            Magma that the Jacobian of the hyperelliptic curve $z^2=(7i) h(u)$ has $79$ and $171$ points 
            respectively over the finite fields $\F_5$ and $\F_{13}$ (reduced at $(2-i)$ 
            and $(2-3i)$) which proves that the torsion subgroup of the Jacobian of $z^2=(7i) h(u)$ over $K$ is trivial.
            Therefore $\tilde{C}_d(K)=\emptyset$ for $d=7i$.
            Hence, if there is a point on the curve $\tilde{C}(K)$, it must arise from the covering 
            $\tilde{C}_1(K)$. 
            
            Now we may assume that $d=1$. We would like to find the 
            $K$-points on the curve 
                     \[ 
                     C_2: z^2=h(u)=u^6 + u^5 - 6u^4 - 3u^3 + 14u^2 - 7u + 1.
                     \] 
            Magma computes that the rank of $J(C_2)(\Q)$ is $2$ and also that $2$ is an an 
            upper bound for $J(C_2)(K)$, therefore the rank of $J(C_2)$ over $K$ is equal to its rank over $\Q$.
            
            Similar to the case $d=7i$, the reduction of $C_2$ at the good primes $(2-i)$ and $(2-3i)$ has $79$ and $171$ points over $\F_5$ and $\F_{13}$ respectively, hence $J(C_2)(K)_{\text{tors}}$ is also trivial.
                        
            Let $J$ denote $J(C_2)$. We claim that $J(K)=J(\Q)$. Since $J(K)$ has rank $2$, we can find 
            $x,y \in J(K)$ such that $J(K)$ is generated by $x$ and $y$ as an abelian 
            group.  Let $\sigma$ be the generator of the $\Gal(K/\Q)$ and assume that $\sigma(x)=ax+by$ 
            for some $a,b \in \Z$. 
            
            Let $x'=nx+my$ and $y'=rx+sy$ be the generators of $J(\Q)$. Then 
            $sx'-my'$ is not zero since 
            $x',y'$ are the generators of a free abelian group. So a multiple 
            of $x$ (namely $(sn-mr)x$) is in 
            $J(\Q)$ and it is fixed by $\sigma$. Then we obtain $lax+lby=lx$ 
            (we use $l$ for $sn-mr$ above to simplify the notation). Since $J(K)$ has trivial torsion, it 
            implies that $(a-1)x+by=0$. We conclude that $a=1$ and $b=0$, i.e., $x$ in $J(\Q)$. A similar argument shows that  
            $y$ is also in $J(\Q)$, consequently $J(K)=J(\Q)$.
            
            We claim that $C_2(\Q)=C_2(K)$. 
            Let $P$ be a point in $C_2(K)$. If $P_0$ denotes the point $[0:1:1]$ on $C_2$, then $[P-P_0]$ 
            represents a point in $J(K)$ which equals to $J(\Q)$. If $P'$ denotes the Galois conjugate of 
            $P$, then 
                $[P'-P_0]$ must be equal to $[P-P_0]$, since a point in $J(\Q)$ is invariant under the action of $\Gal(\bar{\Q}/\Q)$. Hence
             $P=P'$ and $P$ is in $C(\Q)$ which proves that $C_2(\Q)=C_2(K)$. 
            
            Now we will show  
            that  $\tilde{C}(\Q)=\tilde{C}(\Q(i))$. 
            Remember that the curve $\tilde{C}$ is given by              
                \[ 
                y^2=u^8-6u^6+4u^5+11u^4-24u^3+22u^2-8u+1
                \]
            and we showed by the Descent theorem that a point $(u,y)$ in $\tilde{C}(K)$ 
            corresponds to a point on the intersection of
                                   \[ 
                                   C_1: w^2=u^2 - u + 1 
                                   \] 
              and  
                               \[ 
                               C_2: z^2=u^6 + u^5 - 6u^4 - 3u^3 + 14u^2 - 7u + 1 
                               \] 
             such that $y=wz$ where $(u,w) \in C_1(K)$ and $(u,z)\in C_2(K)$.
            The first equation         
                       \[ 
                       w^2=u^2-u+1=(u-1/2)^2+3/4 
                       \] 
            implies that if $(u,w)$ is a point on $C_1(K)$ with $u$ in $\Q$, then $w$ is also in $\Q$. 
            Hence, we showed that if $(u,y)$ is a point on $\tilde{C}(K)$, then $u,z$ are both in $\Q$ 
            since 
            $C_2(K)=C_2(\Q)$ and by the above argument, $w$ is also in $\Q$. Therefore $y=w z$ is also 
            in $\Q$.

            To summarize, we proved our claim that $\tilde{C}(K)=\tilde{C}(\Q)$. 
            This implies that a pair $(E,P)$ on $C(K)$ corresponds to a point $(u,y)$ on $\tilde{C}(\Q)$ and 
            therefore to a point in $C(\Q)$. However, if $E$ is defined over $\Q$ and $x(P) \in \Q$, then $P
            $ is in $E(\Q(\sqrt{d}))$ for some $d \in \Q$. We know by \cite{Laska-Lorenz} that $E$ does not 
            have a subgroup of order $21$ defined over a quadratic extension of $\Q$. Therefore there is 
            no elliptic curve $E$ defined over $K=\Q(i)$ such that $\Z/21\Z \subset E(F)$.

      \end{proof}

     \begin{prop}\label{odd}
            Let $K$ be a quadratic cyclotomic field and let $E$ be an elliptic curve defined over $K$. Then 
            $E(F)_{2'}$ is isomorphic to one of the following groups:
                   \[ 
                   \Z/N\Z \hspace{2mm} \text{for}  \hspace{2mm} N \in      \{1,3,5,7,9,15 \} \hspace{2mm} 
                         \text{or} \hspace{2mm} \Z/3\Z \oplus \Z/3\Z. 
                   \]
     \end{prop}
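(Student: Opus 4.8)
The plan is to feed the twisting decomposition of Lemma~\ref{Laska-Lorenz} into Najman's classification, and then cut down the resulting direct sum with a Weil-pairing bound and the modular-curve nonexistence results. Since $E(F)_{2'}$ is finite it is already realized inside a finite elementary abelian $2$-subextension $L/K$, so Lemma~\ref{Laska-Lorenz} gives $E(F)_{2'}\simeq\bigoplus_i E^{(d_i)}(K)_{2'}$ with the image of each summand a $K$-rational subgroup of $E(F)$. By Najman's theorem \cite{Najman-cyclotomic} each $E^{(d_i)}(K)_{2'}$ is cyclic of order in $\{1,3,5,7,9\}$, the only extra possibility being $\Z/3\Z\oplus\Z/3\Z$ when $K=\Q(\sqrt{-3})$. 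Hence the only primes occurring are $3,5,7$; the $5$- and $7$-parts have exponent dividing $5$ and $7$, the $3$-part exponent dividing $9$, and I may write $E(F)_{2'}=E(F)_3\oplus E(F)_5\oplus E(F)_7$.

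First I would bound the $p$-ranks. If $E[p]\subset E(F)$ then the Weil pairing forces $\mu_p\subset F$; but $F/K$ is elementary abelian of exponent $2$, whereas $K(\mu_5)/K$, $K(\mu_7)/K$ and $K(\mu_9)/K(\mu_3)$ are cyclic of orders $4$, $6$ and $3$ (note $\mu_3\subset F$, since $\sqrt{-3}\in\mathcal O_K$ for both fields). Therefore none of $\mu_5,\mu_7,\mu_9$ lies in $F$, so $E(F)$ contains none of $E[5],E[7],E[9]$. This leaves $E(F)_5\in\{1,\Z/5\Z\}$, $E(F)_7\in\{1,\Z/7\Z\}$, and $E(F)_3\in\{1,\Z/3\Z,\Z/9\Z,\Z/3\Z\oplus\Z/3\Z,\Z/9\Z\oplus\Z/3\Z\}$.

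Next I would eliminate the mixed-prime combinations. Because the Sylow subgroups of a finite abelian group are characteristic, the $p$-part of each $K$-rational summand image is again $K$-rational, and a sum of such parts is a $K$-rational cyclic subgroup giving a non-cuspidal $K$-point on the relevant $X_0(n)$. A point of order $7$ together with any $3$-torsion yields $\Z/21\Z$, excluded by Propositions~\ref{21-2} and~\ref{21}; together with a point of order $5$ it yields a $K$-rational cyclic subgroup of order $35$, impossible by Theorem~\ref{modular curves-Burton} and Remark~\ref{explain Burton}. Similarly a point of order $9$ together with one of order $5$ gives a $K$-rational cyclic subgroup of order $45$, again excluded by Remark~\ref{explain Burton}. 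After these exclusions the only configurations outside the claimed list are the two in which $E(F)$ carries the full $3$-torsion together with extra structure, namely $\Z/9\Z\oplus\Z/3\Z$ and $\Z/3\Z\oplus\Z/3\Z\oplus\Z/5\Z$.

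These two cases are where I expect the real work, since both force $E[3]\subset E(F)$ yet contain no cyclic subgroup of order $27$ or $45$, so no single $X_0(n)$ applies. My approach is to analyze the mod-$3$ Galois representation $\overline{\rho}_{E,3}$: the inclusion $E[3]\subset E(F)$ makes $K(E[3])/K$ elementary abelian of exponent $2$, and since $\det\overline{\rho}_{E,3}$ is the mod-$3$ cyclotomic character, the image of $\overline{\rho}_{E,3}$ is a very small $2$-group — contained in $\{\pm I\}$ for $K=\Q(\sqrt{-3})$ and in a group of order at most $4$ for $K=\Q(i)$. For $\Z/9\Z\oplus\Z/3\Z$ the point of order $9$ forces a $K$-rational $3$-torsion point in the twist that carries it, so its image must also fix a nonzero vector; over $\Q(\sqrt{-3})$ this is incompatible with the image lying in $\{\pm I\}$ unless the image is trivial, which plants $\Z/9\Z\oplus\Z/3\Z$ in that twist over $K$ and contradicts Najman's list, while over $\Q(i)$ a residual image of order $2$ survives, to be disposed of together with the final case. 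That final case, $\Z/3\Z\oplus\Z/3\Z\oplus\Z/5\Z$, is the main obstacle: here the near-scalar image makes every line of $E[3]$ $K$-rational, so the cyclic subgroups one produces have the admissible order $15$ and give no immediate contradiction. I anticipate that ruling it out requires transporting the configuration onto a CM or isogenous curve via the explicit analyses of $X_0(27)$ and $X_0(20)$ in \S\ref{27} and \S\ref{modular curve 20} and then invoking Theorem~\ref{Burt-0,1728}, and I expect this step to need genuine modular computation rather than a purely formal argument.
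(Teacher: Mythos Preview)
Your setup through Lemma~\ref{Laska-Lorenz}, Najman's classification, the Weil-pairing bound on $p$-ranks, and the elimination of the mixed orders $21$, $35$, $45$ via Propositions~\ref{21-2}, \ref{21} and Remark~\ref{explain Burton} is exactly the paper's argument. The divergence is in the two cases you single out.

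For $\Z/9\Z\oplus\Z/3\Z$ the paper does essentially what you sketch, but over a single quadratic extension rather than through the global image. After twisting so that $E(K)$ carries a point $P$ of order $9$, the extra $K$-rational $3$-cycle is defined over some $K(\sqrt d)$; the generator $\sigma$ of $\Gal(K(\sqrt d)/K)$ acts on $E[3]$ as $\bigl(\begin{smallmatrix}1&\alpha\\0&\beta\end{smallmatrix}\bigr)$ in a basis whose first vector is $3P$, and the paper reads off $\beta=1$ from $\zeta_3\in K$ and then $\alpha=0$ from $\sigma^2=1$, so $E[3]\subset E(K)$ and $27\mid|E(K)_{\mathrm{tors}}|$, contradicting \cite[Theorem~2]{Najman-cyclotomic}. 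No separate treatment of $\Q(i)$ and no appeal to $X_0(27)$ or CM is made.

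The genuine gap in your plan is $\Z/3\Z\oplus\Z/3\Z\oplus\Z/5\Z$. Your proposed route through $X_0(27)$, $X_0(20)$ and Theorem~\ref{Burt-0,1728} is off target: neither $20$ nor $27$ divides $45$, and nothing forces the curve to be CM, so those ingredients do not engage with this configuration. The paper's argument is a short isogeny trick that you are missing. With both lines of $E[3]$ $K$-rational and a $K$-rational cyclic subgroup of order $15$, \cite[Lemma~7]{Najman-cubic} produces a curve $K$-isogenous to $E$ carrying a $K$-rational \emph{cyclic} subgroup of order $45$: two independent $K$-rational $3$-isogenies compose, via a dual, to a cyclic $9$-isogeny between the two quotient curves, and the $K$-rational $5$-cycle survives along the isogeny. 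This gives a non-cuspidal $K$-point on $X_0(45)$, contradicting Remark~\ref{explain Burton}. Once you know this lemma the case is immediate and needs no explicit modular computation.
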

      \begin{proof}
          By Lemma \ref{Laska-Lorenz} and \cite[Theorem~2]{Najman-cyclotomic}, we see that the odd numbers dividing 
          the order of $E(F)_{\text{tors}}$ are products of  $3,5,7$ and $9$. Since $F$ does not contain a 
          primitive $n$-th root of unity for $n=5,7$ or $9$, $\Z/5\Z \oplus \Z/5\Z$, $\Z/7\Z \oplus \Z/7\Z$ or 
          $\Z/9\Z \oplus \Z/9\Z$ can not be isomorphic to a subgroup of $E(F)$ by the Weil pairing. 
          
          If $\Z/3\Z \oplus \Z/7\Z$ or $\Z/9\Z \oplus \Z/7\Z$ is a subgroup of $E(F)$, then it is Galois 
          invariant by Lemma \ref{Laska-Lorenz}, hence by Proposition \ref{21-2} and 
          Proposition \ref{21}, this is not possible. (Note that if $E$ has a Galois invariant subgroup of 
          order $63$, then it also has a Galois invariant subgroup of order $21$ which is not possible for 
          $\Q(i)$ by Proposition \ref{21} and for $\Q(\sqrt{-3})$ by Proposition \ref{21-2}.) 
          Similarly, $\Z/5\Z \oplus \Z/7\Z$ and  $\Z/5\Z \oplus \Z/9\Z$ are also not possible by Proposition 
          \ref{modular curves-Burton}. (See also Remark \ref{explain Burton} following 
          Proposition \ref{modular curves-Burton}).

          If $E(F)$ contains a subgroup isomorphic to $\Z/3\Z \oplus \Z/9\Z$, then by 
          Lemma \ref{Laska-Lorenz}, we may assume that $E(K)$ has a point $P$ of order $9$ and has 
          an additional $K$-rational subgroup $C$ of order $3$ arising from a twist, in other words, the subgroup $C$ is defined in a quadratic extension $K(\sqrt{d})$ of $K$. We will show that $E[3]$ is a subset of $E(K)$ and obtain a contradiction since $E(K)$ can not have a subgroup of order $27$ by \cite[Theorem~2]{Najman-cyclotomic}.
          
          Let $\sigma$ be the generator of the Galois group of $K(\sqrt{d})$ over $K$. Then the image of $\sigma$ under the map (Galois action on the set of $3$-torsion points)
           \[ 
            \Gal(K(\sqrt{d})/K) \rightarrow \GL_2(\F_3)
           \]
    is  $\begin{bmatrix} 1 & \alpha \\ 0 & \beta \end{bmatrix}$ for some $\alpha,\beta$ in $\F_3$. Note here that $\beta$ is $1$ modulo $3$ since $K$ contains a third root of unity $\zeta_3$,  $\beta$ is the determinant of the matrix of $\sigma$ and $\sigma(\zeta_3)=\zeta_3^{\beta}$. The fact that $\sigma$ has order $2$ tells us that $\alpha=0$ and $E[3]$ is a subset of $E(K)$ as we claimed.
    
Hence $E(F)$ can not have a subgroup isomorphic to  $\Z/3\Z \oplus \Z/9\Z$.                    
         
 Finally, $\Z/3\Z \oplus \Z/3\Z \oplus \Z/5\Z$ can not be isomorphic to a subgroup of $E(F)$ 
          either. Otherwise, $E(F)$ has a $K$-rational subgroup of order $15$ and by 
          \cite[Lemma 7]{Najman-cubic}, $E$ is isogenous to an elliptic curve with a cyclic $K$-rational subgroup of order $45$ contradicting Proposition \ref{modular curves-Burton}.
          See \cite[Proposition 2.2]{Laska-Lorenz} for the case $K=\Q$.
     \end{proof}
\section{Solutions to the Equation $x^4+y^4=1$.}\label{Mordell}
In this section, we will find points on the affine curve $x^4+y^4=1$ which is an affine model for $Y_0(64)$ \cite[Proposition~2]{Kenku64}. The results of Theorem~\ref{64} will be used in Proposition \ref{j}.

 We will call a solution $(x,y)$ (resp. $(x,y,z)$) of a Diophantine equation trivial if $xy=0$ (resp. $xyz=0$) and non-trivial otherwise.

   \begin{lemma} \label{diophantine}Let $K=\Q(\sqrt{d})$ for $d=-1$ or $-3$.
   \begin{enumerate}
     \item
        The only solutions of $x^4+y^2=1$ defined over $K$ are trivial.
     \item \label{difference} 
        The only solutions of $x^4-y^4=z^2$ defined over $K$ are trivial.
     \item 
        The only solutions of  $x^4+y^4=z^2$ defined over $\Q(i)$ are trivial.

    \end{enumerate}
   \end{lemma}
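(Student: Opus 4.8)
The plan is to realise each of the three equations as an elliptic curve over $\Q$ and then bound its Mordell--Weil group over $K$ by combining Lemma \ref{rank} with a Magma rank computation, thereby reducing the whole problem to a finite search. In a suitable affine chart each equation cuts out a curve of the shape $C : y^2 = q(x)$ with $q \in \Z[x]$ a squarefree quartic, namely $q = 1-x^4$, $x^4-1$, and $x^4+1$ for parts (1), (2), (3) respectively; in each case $C$ is a smooth genus-one curve carrying an obvious $\Q$-rational point, such as $(0,1)$ or $(1,0)$. Hence $C$ is isomorphic over $\Q$ to its Jacobian $E = \mathrm{Jac}(C)$, and the standard passage from a quartic model to Weierstrass form produces an explicit $E : Y^2 = X^3 + aX$ with $j(E) = 1728$ (so $E$ has CM by $\Z[i]$). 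Under this isomorphism a $K$-point $(x,y)$ on $C$ with $xy \neq 0$ (respectively a nontrivial $(x,y,z)$ with $xyz \neq 0$, which after rescaling lands in this chart) corresponds to a definite $K$-point of $E$, so it suffices to determine $E(K)$ and check that no such nontrivial affine point survives.

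To compute $E(K)$ I would invoke Lemma \ref{rank}: since $d = -1, -3$ are squarefree integers and $E$ is defined over $\Q$, we get $\mathrm{rank}\,E(K) = \mathrm{rank}\,E(\Q) + \mathrm{rank}\,E^{(d)}(\Q)$. For the curves of parts (1) and (2) over both fields, and for part (3) over $\Q(i)$, a Magma computation certifies that all these ranks vanish. Here one exploits that for a curve of the form $Y^2 = X^3 + aX$ the quadratic twist by $-1$ is the curve itself, whence $\mathrm{rank}\,E(\Q(i)) = 2\,\mathrm{rank}\,E(\Q) = 0$ follows automatically from $\mathrm{rank}\,E(\Q) = 0$. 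Consequently $E(K) = E(K)_{\mathrm{tors}}$ is finite; I would then read off $E(K)_{\mathrm{tors}}$ in Magma, transport the finitely many points back to $C(K)$, and verify directly that every affine solution has $xy = 0$, the remaining points lying at infinity ($z = 0$) and so not counted as solutions.

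The genuinely delicate point, and the reason part (3) is asserted only over $\Q(i)$, is the rank input over $\Q(\sqrt{-3})$, where one needs $\mathrm{rank}\,E^{(-3)}(\Q) = 0$. Twisting by $-3$ sends the relevant CM curve for a parameter $n$ to the one for $3n$; parts (1) and (2) therefore require $3$ to be a non-congruent number (true), whereas part (3) would require $6$ to be non-congruent, which is false since $6$ is congruent. This is exactly why $E^{(-3)}(\Q)$ picks up positive rank for $y^2 = x^4 + 1$ and the statement must be restricted to $\Q(i)$. The main obstacle in carrying out the proof is thus certifying rank $0$ for the relevant quadratic twists by an honest descent, not merely an analytic-rank heuristic; once that is in hand, the torsion computation and the finite check distinguishing trivial from nontrivial solutions are routine.
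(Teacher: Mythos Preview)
Your proposal is correct and follows essentially the same strategy as the paper: map each equation to a $j=1728$ elliptic curve, establish that the Mordell--Weil group over $K$ is finite, enumerate the torsion, and verify that only trivial solutions occur. The paper writes down explicit rational maps to $y^2=x^3+4x$ for parts (1)--(2) and to $y^2=x^3-4x$ for part (3), and recycles the fact that $y^2=x^3+4x$ is the model of $X_0(32)$ whose $K$-points were already determined in Proposition~\ref{modular curves}; you instead invoke Lemma~\ref{rank} and a fresh Magma rank computation, which amounts to the same thing. Your congruent-number remark explaining the failure over $\Q(\sqrt{-3})$ for part (3) is a nice piece of context the paper does not spell out.
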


        \begin{proof} \leavevmode

           \begin{enumerate}
               \item
                   We can find a rational map between 
                     $C: x^4+y^2=1$  and $E:y^2=x^3+4x$ such that
                             \[ 
                             f: (x,y) \mapsto \left(\frac{2x^2}{1-y}, \frac{4x}{1-y}\right). 
                             \]
                     Since the $K$-valued points on $C$ maps to $K$-valued points on $E$, it is 
                     enough to 
                     look at the inverse images of $K$-points on $E$ and the points where the map $f$ is not 
                    defined.
                    We previously computed in the proof of Proposition \ref{modular curves} that
                       \[ 
                         E(\Q(\sqrt{-3}))=\{(0,0), (2,\pm 4) \} 
                       \]
                    and   
                      \[ 
                          E(\Q(i))=\{ (0,0), (2,\pm 4), (\pm 2i,0), (-2, \pm 4i)\} 
                      \]
                   We easily compute that the inverse image of $E(\Q(i))$ under $f$ is the set
                     \[ 
                        \{ (0,- 1),(\pm i,0),(\pm 1,0). \}
                      \]
                  The points where $f$ is not defined are the points where $y=1$ and so we obtain the point
                   $ (0,1)$. All of the solutions we found are trivial.
              \item 
              Let $(a,b,c)$  be a solution to $x^4-y^4=z^2$ over $K$. Assume $a^2\neq c$, then 
                    $(\frac{2b^2}{a^2-c},\frac{4 a b}{a^2-c})$ is a point on $E: y^2=x^3+4x$. As we did earlier, we simply compute the points $(a,b,c)$ for each point in 
                       $E(K)$ (which is described in the first part) and the points $(a,b,c)$ 
                   with $a^2=c$  to show that $a b c=0$. 
             \item 
                       Let 
                             \[
                              f: \left(x^4+y^4=z^2\right) \longrightarrow \left(y^2=x^3-4x\right)
                               \]
                             \[ 
                             (x,y,z) \mapsto \left( \frac{-2x^2}{y^2-z}, \frac{4xy}{y^2-z}\right)
                              \]
                      A quick computation on Magma shows that the affine curve $y^2=x^3-4x$ has only $3$ points 
                      defined over $\Q(i)$ and they are 
                      $(0,0)$ and $ (\pm 2,0)$. Hence if 
                      $(a,b,c)$ is a solution to $x^4+y^4=z^2$ over $\Q(i)$, then either $a=0$ or $b=0$. 
                      Notice 
                      also that if $f$ is not defined at $(a,b,c)$, then $a=0$. Hence the only solutions defined 
                      over 
                     $\Q(i)$ are trivial.    
           \end{enumerate}
        \end{proof}
        \begin{theorem} \label{64}
          Let $K=\Q(i)$. Assume that $x^4+y^4=1$ has a solution in a quadratic  
            extension $L$ of $K$. Then $L=K(\sqrt{-7})$ and the only solutions are 
               \[
                (\epsilon_1 \frac{1+\epsilon_3 \sqrt{-7}}{2}, \epsilon_2 \frac{1-\epsilon_3 \sqrt{-7}}{2} ), 
                \]
                \[(\pm i ,0), \hspace{1mm} (\pm 1,0), \hspace{1mm} (0, \pm i), \hspace{1mm} (0,\pm 1)   
                \]
        where
            \[ 
            \epsilon_{1,2}=\pm i \hspace{1mm}\text{or} \hspace{1mm} \pm 1,  \hspace{1mm} \epsilon_3=\pm1, \hspace{2mm} \text{ and}  
            \hspace{2mm} i=\sqrt{-1}. 
            \]
   \end{theorem}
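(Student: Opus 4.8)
The plan is to fix the quadratic extension $L=K(\sqrt m)$ of $K=\Q(i)$ carrying a solution $(x,y)$ of $x^4+y^4=1$, let $\sigma$ generate $\Gal(L/K)$, and reduce every configuration to the auxiliary equations of Lemma~\ref{diophantine}. First I dispose of the degenerate cases. If $(x,y)\in K$, then $(x,y,1)$ is a $K$-solution of $X^4+Y^4=Z^2$, so $xy=0$ by Lemma~\ref{diophantine}(3), leaving only the trivial points $(\pm1,0),(\pm i,0),(0,\pm1),(0,\pm i)$. So I may assume $(x,y)\notin K$. Next suppose $x^2\in K$ or $y^2\in K$; since $x^4=1-y^4$, one forces the other, so $u:=x^2$ and $v:=y^2$ lie in $K$ with $u^2+v^2=1$. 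If moreover $x\in K$, then $(x,v)$ is a $K$-solution of $X^4+Y^2=1$, hence trivial by Lemma~\ref{diophantine}(1), forcing $(x,y)\in K$. Otherwise $u,v$ are non-squares and $L=K(\sqrt u)=K(\sqrt v)$, so $r:=u/v=uv/v^2\in(K^*)^2$, say $r=\rho^2$; from $v^2(r^2+1)=u^2+v^2=1$ I get that $\rho^4+1=v^{-2}$ is a square, so $(\rho,1,v^{-1})$ is a nontrivial $K$-solution of $X^4+Y^4=Z^2$, contradicting Lemma~\ref{diophantine}(3). Hence no nontrivial solution satisfies $x^2\in K$ or $y^2\in K$.

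This leaves the main case $x^2,y^2\notin K$. The key structural fact to establish here is that $(xy)^2\in K$; granting it, write $p^2:=(xy)^2\in K$ and use the identities $(x^2+y^2)^2=1+2p^2$ and $(x^2-y^2)^2=1-2p^2$. Because $x^2,y^2\notin K$ one checks that $1+2p^2,1-2p^2$ cannot both be squares in $K$ (that would put $x^2,y^2\in K$) nor both non-squares (that would put $x^4\in K$), so exactly one of them is a square $c^2$, and the other represents the class of $m$ in $K^*/(K^*)^2$. Imposing that $x^2=\tfrac12\!\left(c+\sqrt{1-2p^2}\right)$ be a square in $L$ and using $c^2-4p^2=1-2p^2$, the requirement collapses to the condition that one of $c\pm 2p=(x\pm y)^2$ be a square $g^2$ in $K$. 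Eliminating $c$ from $c-2p=g^2$ and $c^2=1+2p^2$ produces $h^2=2(g^4+1)$ for some $h\in K$. Thus every nontrivial solution yields a $K$-point on the genus-one curve $C:h^2=2(g^4+1)$, and conversely the solutions built from $\omega:=\tfrac{1+\sqrt{-7}}{2}$ correspond to $(g,h)=(\pm1,\pm2)$.

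The last step is to compute $C(K)$. I would pass to the Jacobian of $C$, an elliptic curve over $K$, and run a two-descent in Magma exactly as in Lemma~\ref{diophantine} and Proposition~\ref{21}, showing that its Mordell--Weil rank over $K$ is $0$ and that the torsion forces $g\in\{\pm1,\pm i\}$, $h=\pm2$. Back-substituting gives $p^2=4$, whence $1-2p^2=-7$ and $L=K(\sqrt{-7})$; tracing the formulas recovers $x=\epsilon_1\omega$, $y=\epsilon_2\bar\omega$, while the fourth roots of unity $\epsilon_1,\epsilon_2$ and the sign $\epsilon_3$ come from the automorphisms $(x,y)\mapsto(\zeta x,\zeta' y)$, $\zeta,\zeta'\in\mu_4\subset K$, of the quartic together with the action of $\sigma$.

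The main obstacle is the crux $(xy)^2\in K$ in the main case, since a purely formal Galois computation only shows that $(xy)^2$ and its conjugate satisfy constraints over $K$. I expect to settle it by descent on the CM curve $E:v^2=u^3-4u$, the target of the map in Lemma~\ref{diophantine}(3): a nontrivial $(x,y)$ gives a point $Q\in E(L)$ that is not $2$-torsion and not in $E(K)=E[2]\cong(\Z/2)^2$, and $R:=2Q$ satisfies $\sigma(R)=-R$, hence corresponds to a $K$-rational point on the quadratic twist $E^{(m)}$. Controlling these twists (the $j$-invariant is $1728$, so CM by $\Z[i]$ keeps the analysis finite), together with Lemma~\ref{diophantine}(2) applied to the $K$-rational relation $N_{L/K}(x)^4-N_{L/K}(y)^4=\operatorname{Tr}_{L/K}(x^4)-1$, is what I expect both to force $(xy)^2\in K$ and to single out the field $K(\sqrt{-7})$.
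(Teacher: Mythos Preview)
Your proposal has a genuine gap at the step you yourself flag as ``the main obstacle'': the claim that $(xy)^2\in K$ in the main case $x^2,y^2\notin K$. The plan you sketch does not close it. The observation that $R=2Q$ satisfies $\sigma(R)=-R$ is correct and does produce a $K$-point on the quadratic twist $E^{(m)}$, but this does not constrain $m$: infinitely many quadratic twists of $y^2=x^3-4x$ have positive rank over $K=\Q(i)$, so the assertion that ``CM by $\Z[i]$ keeps the analysis finite'' is unjustified --- complex multiplication controls the endomorphism ring, not the Mordell--Weil ranks of the twists. Your norm/trace identity $N_{L/K}(x)^4-N_{L/K}(y)^4=\operatorname{Tr}_{L/K}(x^4)-1$ is also correct (both sides equal $x^4-\sigma(y)^4$), but the right-hand side is not visibly a square in $K$, so Lemma~\ref{diophantine}(\ref{difference}) does not apply. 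Without $(xy)^2\in K$ the reduction to the curve $h^2=2(g^4+1)$ never gets off the ground, and even granting it you would still need to verify that this genus-one curve has rank zero over $\Q(i)$, another computation left unperformed.

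The paper's proof avoids this difficulty entirely by following Mordell's classical parametrization. One writes $a^2=(1-t^2)/(1+t^2)$, $b^2=2t/(1+t^2)$; the case $t\in K$ reduces immediately to Lemma~\ref{diophantine}. When $t\notin K$ one sets $X=(1+t^2)ab$ and $Y=(1+t^2)b$, so that $X^2=2t(1-t^2)$ and $Y^2=2t(1+t^2)$. Writing $X=c+dt$, $Y=e+ft$ with $c,d,e,f\in K$, the cubics $g(z)=(c+dz)^2-2z(1-z^2)$ and $h(z)=(e+fz)^2-2z(1+z^2)$ both vanish at $t$ and are therefore divisible by the minimal polynomial $F(z)$ of $t$ over $K$; their remaining $K$-rational roots $u,v$ furnish $K$-points on the \emph{fixed} curves $y^2=x^3-4x$ and $y^2=x^3+4x$, whose finite point sets over $\Q(i)$ are already known. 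A short case analysis on the handful of possible $(u,v)$ then pins down $F(z)$ and hence $L=K(\sqrt{-7})$, and back-substitution recovers the listed solutions. This route never needs a symmetry relation like $(xy)^2\in K$; the finiteness comes from two specific elliptic curves rather than from an uncontrolled family of twists.
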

   \begin{proof}
      Mordell (Chapter 14, Theorem 4 of \cite{Mordell}) proves that  if $x^4+y^4=1$ has a nontrivial 
      solution in a quadratic extension $L$ of $\Q$, then $L$ is $\Q(\sqrt{-7})$. We will use his technique 
      to 
      show that this result still holds if one replaces $\Q$ with $\Q(i)$.
      Let $L$ be a quadratic extension of $K$ and let $(a,b)$ be a solution in $L$. Then we can 
      find $t \in L$ such that \[ a^2=\frac{1-t^2}{1+t^2}, \hspace{3mm} b^2=\frac{2t}{1+t^2}.\] We will 
      analyze 
      the equation in two cases: 
            \begin{enumerate}
                 \item $t$ is in $K$
                  \item $t$ is not in $K$.  
             \end{enumerate}
       Assume $t$ is in $K$. If $a$ (resp. $b$) is in $K$, then $(a,b^2)$ (resp. $
       (a^2,b)$) gives a solution to the equation $x^4+y^2=1$ (resp. $x^2+y^4=1$) and Lemma \ref{diophantine} tells us that $(a,b)$ is trivial.
       If neither $a$ nor $b$ is in $K$, then $a=a_1 \sqrt{w}$ and $b=b_1\sqrt{w}$ for some $a_1, b_1$, 
       and $w$ in $K$ since $a^2,b^2 \in K$, hence $(a_1, b_1, 1/w)$ is a solution to $x^4+y^4=z^2$. 
       Again by Lemma \ref{diophantine}, this is not possible.
 
       Assume $t\notin K$. Since $t$ is in $L$, $K(t)$ is contained in $L$ and thus $L=K(t)$. Let $F(z)$ 
       be the minimal polynomial of $t$ over $K$ and let us define $X,Y$ as follows:
                        \[ 
                        X=(1+t^2)ab, \hspace{2mm} Y=(1+t^2)b 
                        \]
       Then, it is easy to see that $X^2=2t(1-t^2)$ and $Y^2=2t(1+t^2)$. Since $X,Y$ are in $L$, then 
       there are $c,d,e,f \in K$ such that 
                        \[ 
                        X=c+dt, \hspace{2mm} Y=e+ft
                         \]
        Let us define the polynomials $g(z)$ and $ h(z)$ as follows:
                          \[
                          g(z)=(c+dz)^2-2z(1-z^2)
                           \] 
                           \[ 
                           h(z)=(e+fz)^2-2z(1+z^2). 
                           \]
       Then $g(t)=h(t)=0$ because $X^2=2t(1-t^2)$ and $Y^2=2t(1+t^2)$.
       Since $F(z)$ is the minimal polynomial of $t$ over $K$, $F(z)$ must divide both $g(z)$ and 
       $h(z)$. 
       It follows that $g$ and $h$ both have exactly one root over $K$ (not necessarily the same root) 
       since 
       $g$ and $h$ are cubic polynomials. Let $u$ and $v$ denote the roots of $g$ and $h$ respectively, that is,          
                \[ 
                g(z)=2(z-u)F(z) \hspace{1mm} \text{and} \hspace{2mm} h(z)=-2(z-v)F(z). 
                \]
        Then $(-2u,2(c+d u))$ is a point on the affine curve $E_1: y^2=x^3-4x$. We previously computed the 
        points on $E_1(K)$ in the proof of Proposition \ref{diophantine}. There are 
        three of 
        them; $(0,0),(2,0),(-2,0)$, hence $u$ is either $0,1$ or $-1$.
        
       Similarly, $(2v,2(e+fv)$ is a point on the affine curve $E_2: y^2=x^3+4x$. Thus using our previous computations of $E_2(K)$, we see that the only possible solutions for $(2v,2(e+fv))$ are  
                \[ 
                \{(0,0),(2i,0),(-2i,0), (2,4),(2,-4),(-2,4i),(-2,-4i)\}.
                 \] 
      This shows that $v$ is either $0,\pm1$ or $\pm i$. We will first compute $F(z)$ for each value of $u$ using the 
        equality $g(z)=2(z-u)F(z)=(c+dz)^2-2z(1-z^2)$.
               \begin{enumerate}
            \item If $u=0$, then $g(0)=c^2=0$ and hence $c=0$. We obtain that $g(z)=2z(z^2+\frac{d^2}{2}
               z-1)$ and                    
                        \[ 
                        F(z)= z^2+\frac{d^2}{2}z-1.
                         \]                     
             We compute $h(z)=-2(z-v)(z^2+\frac{d^2}{2}z-1)$. Notice that the constant term of $h(z)
             $ 
             is $-2v$. On the other hand, $h(z)= (e+fz)^2-2z(1+z^2)$ and hence the constant term is 
             $e^2$. 
             Therefore $e^2=-2v$ and this is satisfied only if $v=0$ or $\pm i$.              
        \begin{enumerate}
               \item If $v=0$, then
                               \begin{equation} \label{v:0}
                               F(z)=z^2-\frac{f}{2}z+1
                               \end{equation} 
                                which contradicts with $F(z)= z^2+\frac{d^2}{2}z-1$ since the constant terms are 
                                not equal. 
                \item If $v=i$, then using the equality $h(i)=0$,
                                we find that   
                       \[ 
                       F(z)=z^2+z(-\frac{f^2}{2}+i)+i\frac{f^2}{2}=z^2+\frac{d^2}{2}z-1 
                       \]                       
                     and hence $-\frac{f^2}{2}+i= \frac{d^2}{2}$ and $ i\frac{f^2}{2}=-1$. It follows that $f^2=2i$ 
                      and $d=0$ and we arrive at a contradiction since $F$ has to be irreducible over $K$.
                 \item If $v=-i$, then we find that                        
                        \[ 
                        F(z)=z^2+z(-\frac{f^2}{2}-i)-i\frac{f^2}{2}=z^2+\frac{d^2}{2}z-1
                         \]                        
                     which forces $d$ to be $0$ and we arrive at a contradiction.    
          \end{enumerate}     
      \item If $u=1$, then $g(1)=(c+d)^2=0$ and hence $c=-d$. We see that 
      $g(z)=d^2(z-1)^2+2z(z^2-1)$   
            and we find that              
                 \[  
                 F(z)=z^2+z(\frac{d^2}{2}+1)-\frac{d^2}{2}
                 \]              
            which tells us that the constant term of $h(z)$ is $e^2=-d^2v$. This equation has solutions in $K$ only if 
              $v=0$ or $v=\pm -1$. 
            \begin{enumerate}
                 \item If $v=0$, we computed $F(z)$ in (\ref{v:0}). We obtain                     
                       \[ 
                        F(z)=z^2-\frac{f^2}{2}z+1=z^2+z(\frac{d^2}{2}+1)-\frac{d^2}{2}. 
                        \]  
                      The equality of the constant terms gives us $-d^2=2$ and we obtain a contradiction.   
                \item If $v=1$, then $(e+f)^2=2(1+1)=4$. Now the long division of $h(z)$ by $
                    (z-1)$gives us that    
                        \begin{equation}  \label{v:1}
                        F(z)=z^2-(\frac{f^2}{2}-1)z+\frac{e^2}{2}. 
                        \end{equation}
                   On the other hand, $F(z)=z^2+z(\frac{d^2}{2}+1)-\frac{d^2}{2}$, thus $e^2=-d^2=f^2$ and since $(e+f)^2=4$, we get $e^2=f^2=1$ and 
                      \begin{equation} \label{1/2,1/2}
                       F(z)=z^2+\frac{z}{2}+\frac{1}{2}. 
                       \end{equation}  
                \item Similarly if $v=-1$, we obtain $(e-f)^2=-4$. Similar to the previous part, the long division of $h(z)$ by $(z+1)$ produces                     
                          \begin{equation}\label{v:-1}
                           F(z)=z^2-(\frac{f^2}{2}+1)z-\frac{e^2}{2}.
                           \end{equation}
                      On the other hand, $F(z)=-2(z+1)(z^2+z(\frac{d^2}{2}+1)-\frac{d^2}{2})$.     
                   \[ 
                       e^2=d^2 \hspace{2mm} \text{and} \hspace{2mm} d^2+f^2=-4    
                        \]  
                        which implies that $e=0$ or $f=0$.  If $e=0$, then $d=0$ and we get a contradiction. 
                         If $f=0$, then $e^2=d^2=-4$ and we compute
                                             \begin{equation} \label{-1,1}
                                              F(z)=z^2-z+ 2. 
                                              \end{equation}
                \end{enumerate}      
                 
       \item If $u=-1$, then $g(-1)=(c-d)^2=0$ and hence $c=d$. We see that
            \[  
            F(z)=z^2+z(\frac{d^2}{2}-1)+\frac{d^2}{2}.
             \] 
               Constant coefficient of $h(z)$ equals to $2v (\frac{d^2}{2})=e^2$, thus $v$ has to be a 
               square 
                 in $K$. Therefore, $v$ can be $0$ or $\pm 1$ and we computed $F(z)$ for each of these values in (\ref{v:0}),(\ref{v:1}), and (\ref{v:-1}).
                               \begin{enumerate}
                   \item If $v=0$, then                   
                        \[ 
                         F(z)=z^2+z(\frac{d^2}{2}-1)+\frac{d^2}{2}=F(z)=z^2-\frac{f^2}{2}z+1. 
                         \] 
                        we find $d^2=2$ which is not possible since $d$ is in $K$.                 
                    \item If $v=1$, then                           
                         \[  
                         F(z)=z^2+z(\frac{d^2}{2}-1)+\frac{d^2}{2}=z^2-(\frac{f^2}{2}-1)z+\frac{e^2}{2} 
                         \]
                          which implies that $d^2=e^2$ and $d^2+f^2=4$. We see that $f=0$ and $d^2=4$. In this 
                          case, we compute 
                                 \begin{equation} \label{1,1}
                                  F(z)=z^2+z+2. 
                                  \end{equation}
                   
                   \item If $v=-1$, then we have 
                          \[
                           F(z)=z^2+z(\frac{d^2}{2}-1)+\frac{d^2}{2}=z^2-(\frac{f^2}{2}+1)z-\frac{e^2}{2}. 
                           \]
                        In this case, we compute that $d^2=1$ and hence
                          \begin{equation} \label{-1/2,1/2}
                           F(z)=z^2-\frac{z}{2}+\frac{1}{2}. 
                           \end{equation}        
              \end{enumerate}
   \end{enumerate} 
     To summarize, we showed that $F(z)$ is one of the following polynomials:
                    \[ 
                    z^2+\frac{z}{2}+\frac{1}{2}, \hspace{2mm}  z^2-z+ 2, \hspace{2mm}
                                         z^2+z+2, \hspace{2mm}          z^2-\frac{z}{2}+\frac{1}{2}.
                      \]                        
     One can easily check that the splitting field $L/K$ of each polynomial listed above is 
    $K(\sqrt{-7})$. Now we will find all non-trivial solutions to the equation $x^4+y^4=1$ over the field 
    $L=K(\sqrt{-7})$.   
    
    Remember that we started with a solution $(a,b)$ in $L$ and constructed $X$ and $Y$. It is easy to 
    see that $a=X/Y=\frac{c+dt}{e+ft}$ and $b=Y/(1+t^2)=\frac{e+ft}{1+t^2}$. In the following, we use the notation $w=\sqrt{-7}$ for simplicity. Also $\delta_j$ 
    for $j=1,2,3$ denote the integers such that $\delta_j^2=1$.
        \begin{enumerate} 
              \item We found $F(z)= z^2+z+2$ in (\ref{1,1}) with conditions that $f=0$, $d^2=e^2=4$ and $c=d$. The roots 
                   of   
                   $F(z)$ are $t=(-1+\delta_3 w)/2$ and
               we compute 
                           \[
                            a= \delta_1 (1+\delta_3 w)/2.                                     \] 
               Then we find $b$ as $\delta_2 (-1+\delta_3 w)/2$.   
                              
           \item  Similarly, we obtained $F(z)=z^2-z+2$ in (\ref{-1,1}) with the conditions $c=-d$, $e^2=d^2=-4$ and $f=0$.
                        In this case, we find $t=(1+\delta_3 w)/2$ and 
                     \[ 
                     a=\delta_1(-1+\delta_3 w)/2.    
                      \]   
                  Similar to the first case, $b= i\delta_2(1+\delta_3 w)/2$.      
           \item The polynomial $F(z)=z^2+\frac{z}{2}+\frac{1}{2}$ we found in (\ref{1/2,1/2}) produces $t=(-1 \pm w)/4$ and we 
                     compute
                    \[
                     a=\delta_1 i (-1+\delta_3 w)/2,  \hspace{3mm} b= \delta_2 (1+\delta_3 w)/2.
                     \]   
                       
          \item Similarly, the polynomial $F(z)=z^2-\frac{z}{2}+\frac{1}{2}$ in (\ref{-1/2,1/2}) produces $t=(1 \pm w)/4$ 
                  and               
                   \[ 
                   a=\delta_1 i (1+\delta_3 w)/2, \hspace{3mm} b=\delta_2 i(-1+\delta_3 w)/2.
                   \] 
     \end{enumerate}
  Hence the result follows.
  
   \end{proof}
    \begin{remark}
        Unfortunately, the method of the proof of Theorem \ref{64} does not apply to the solutions over 
        the field $\Q(\sqrt{-3})$ since there 
        are non-trivial solutions to the equation $u^4+v^4=z^2$ over $\Q(\sqrt{-3})$.    
      \end{remark}


\section{Restrictions on the Torsion Subgroups} \label{restrictions}
In the proof of Proposition \ref{20}, we will explicitly describe three cyclic $20$ isogenies. To show that their field of definition is $\Q(i)$, we will first need the following result. 

\begin{prop} \label{i}
                   Let $E_1$ and $E_2$ be the elliptic curves associated to the lattices $[1,i]$ and $[1,2i]$.        
                   Then any isogeny in $\text{Hom}(E_1,E_2)$ is defined over the field $\Q(i)$.
\end{prop}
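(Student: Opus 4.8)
The plan is to reduce the statement to two facts: that one explicit isogeny $E_1\to E_2$ is defined over $\Q(i)$, and that the complex multiplication of $E_1$ by $\Z[i]$ is defined over $\Q(i)$. I would begin with the lattice description of $\Hom(E_1,E_2)$ recalled in the Background section. Writing $L_1=[1,i]=\Z[i]$ and $L_2=[1,2i]$, an isogeny $[\alpha]_{E_1,E_2}$ exists exactly when $\alpha L_1\subseteq L_2$. Testing this on the generators $1$ and $i$ of $L_1$ gives $\alpha\in L_2$ and $i\alpha\in L_2$, which forces $\alpha\in 2\Z[i]$, and conversely every $\alpha\in 2\Z[i]$ works; hence $\Hom(E_1,E_2)=\{[\alpha]_{E_1,E_2}:\alpha\in 2\Z[i]\}$. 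Since $[\alpha]_{E_1,E_2}\circ[\beta]_{E_1}=[\alpha\beta]_{E_1,E_2}$ for $\beta\in\Z[i]=\End(E_1)$, this is a free rank-one right $\End(E_1)$-module generated by $\varphi:=[2]_{E_1,E_2}$. Note that $L_2\subseteq L_1$ with index $2$, so $\varphi$ is a $2$-isogeny (dual to the natural projection $[1]_{E_2,E_1}\colon E_2\to E_1$).

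Next I would record the reduction. The isogenies defined over $\Q(i)$ form a subgroup of $\Hom(E_1,E_2)$ that is closed under pre- and post-composition with endomorphisms defined over $\Q(i)$. Therefore, once I know that $\varphi$ is defined over $\Q(i)$ and that every $[\beta]_{E_1}$ with $\beta\in\Z[i]$ is defined over $\Q(i)$, it follows that each $\varphi\circ[\beta]_{E_1}=[2\beta]_{E_1,E_2}$ is defined over $\Q(i)$; as $\beta$ runs over $\Z[i]$ these exhaust $\Hom(E_1,E_2)$, proving the proposition.

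For the endomorphisms I would invoke that $E_1$ has complex multiplication by the maximal order $\Z[i]$ of $\Q(i)$ and is defined over $\Q$: since the base field $\Q(i)$ contains the CM field, \cite[Theorem~2.2]{silverman-advance} guarantees that $\End(E_1)=\Z[i]$ is defined over $\Q(i)$ (concretely, on the model $y^2=x^3-x$ the extra automorphism is $[i]_{E_1}\colon(x,y)\mapsto(-x,iy)$, which is visibly defined over $\Q(i)$). For $\varphi$, the lattice computation identifies its kernel with the order-$2$ subgroup generated by the half-period $\tfrac12\in\Co/L_1$. The key observation is that on the CM model all of $E_1[2]$ is rational over $\Q(i)$ (for $y^2=x^3-x$ the $2$-torsion is $(0,0),(\pm1,0)$, and for $y^2=x^3+x$ it is $(0,0),(\pm i,0)$), so this kernel is a $\Q(i)$-rational subgroup and the quotient isogeny $E_1\to E_1/\ker\varphi$ is defined over $\Q(i)$.

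The main obstacle will be matching models: the Vélu quotient $E_1/\ker\varphi$ is a priori only $\Co$-isomorphic to the fixed model of $E_2$, and since $j(E_2)=287496\neq 0,1728$ these two curves differ by a quadratic twist over $\Q(i)$, so I must rule out a spurious twisting character to conclude that $\varphi$ genuinely lands in $\Hom(E_1,E_2)$ over $\Q(i)$. I would resolve this by exploiting that both $\Z[i]$ and $\Z[2i]$ have class number one: the standard CM theory then furnishes compatible $\Q$-models of $E_1$ and $E_2$ together with a $\Q$-rational isogeny realizing $L_2\subseteq L_1$ (equivalently, one may simply take $E_2$ to be the Vélu quotient itself). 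Once a single nonzero isogeny is $\Q(i)$-rational, the $\Z[i]$-linear Galois action on the rank-one module $\Hom(E_1,E_2)$ is a character $\Gal(\bar{\Q}/\Q(i))\to\Z[i]^{\times}=\{\pm1,\pm i\}$ that fixes a nonzero element, hence is trivial, so every element of $\Hom(E_1,E_2)$ is fixed. This twist-matching is the only delicate point; everything else is the lattice bookkeeping above together with \cite[Theorem~2.2]{silverman-advance}.
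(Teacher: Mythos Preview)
Your proof is correct and takes a genuinely different route from the paper's. Both begin with the same lattice computation identifying $\Hom(E_1,E_2)$ with $2\Z[i]$, but then diverge. The paper argues directly that the two additive generators $[2]_{E_1,E_2}$ and $[2i]_{E_1,E_2}$ are defined over $\Q(i)$: it first observes that they cannot both be defined over $\Q$ (since otherwise $[6+2i]_{E_1,E_2}$ would be a $\Q$-rational cyclic $20$-isogeny, which does not exist), and then uses compositions with $[i]_{E_2,E_1}$ and the fact that $[2i]_{E_1}$ has minimal field of definition exactly $\Q(i)$ to force the quadratic field $L$ over which $[2i]_{E_1,E_2}$ is defined to equal $\Q(i)$. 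Your argument instead recognises $\Hom(E_1,E_2)$ as a free rank-one right $\End(E_1)=\Z[i]$-module, invokes \cite[Theorem~2.2]{silverman-advance} to get all of $\End(E_1)$ defined over $\Q(i)$, and then observes that once a single nonzero isogeny is $\Q(i)$-rational the $\Z[i]$-linear Galois action is through a character into $\Z[i]^\times$ fixing a nonzero element, hence trivial.

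What your approach buys is a cleaner structural picture that avoids the somewhat ad~hoc composition chase, and it makes the twist-matching issue (which model of $E_2$ to take) explicit: you resolve it by declaring $E_2$ to be the V\'elu quotient $E_1/\langle(1,0)\rangle$ on the model $y^2=x^3-x$, which immediately makes one generator $\Q$-rational. The paper's argument leaves this model compatibility implicit. On the other hand, the paper's approach has the virtue of being entirely elementary and self-contained, relying only on the nonexistence of rational $20$-isogenies and explicit compositions rather than the module-theoretic reduction; this same hands-on pattern is reused later in the paper (Proposition~\ref{j}) for $\Hom(E_2,E_3)$ with $\sqrt{-7}$ in place of $i$.
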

\begin{proof} 

Let $\lambda :E_1 \to E_2$ be an isogeny, then $\lambda=[\alpha]_{E_1,E_2}$ for some $\alpha \in \Co$ with $\alpha$ and $\alpha i$ both in $[1,2i]$ since $\alpha [1,i]$ is contained in $[1,2i]$. It follows that $\alpha=2a+2bi$ for some integers $a$ and $b$. Hence $\Hom(E_1,E_2)$ is isomorphic to $2(\Z[i])$ as an additive group and it is enough to show that the isogenies $[2]_{E_1,E_2}$ and $[2i]_{E_1,E_2}$ are defined over $\Q(i)$. 

We first note that both the lattices $[1,i]$ and $[1,2i]$ have $j$-invariant in $\Q$, since their endomorphism rings $\Z[i]$ and $\Z[2i]$ have class number one. Hence we may assume that $E_1,E_2$ are defined over $\Q$.

  
We see that $[2]_{E_1,E_2}$ and $[2i]_{E_1,E_2}$ both define isogenies of degree $2$. Since $E_1$ has $j$ invariant $1728$, it has a model of the form 
       \[ 
       y^2=x^3+dx 
       \] 
 for some $d$. The elliptic curve $E_1$ has at least one $2$-isogeny defined over $\Q$ since $(0,0)$ is a point on $E_1$. Note that  $[6+2i]_{E_1,E_2}$ is a cyclic isogeny of degree $20$ in $\Hom(E_1,E_2)$. If $[2]_{E_1,E_2}$ and $[2i]_{E_1,E_2}$ are both defined over $\Q$, then $[6+2i]$ is also defined over $\Q$ but there is no elliptic curve over $\Q$ with a rational cyclic $20$-isogeny. See \cite[Theorem~2.1]{Laska-Lorenz}. Hence the isogenies $[2]_{E_1,E_2}$ and $[2i]_{E_1,E_2}$ cannot be both defined over $\Q$.

 Assume $[2i]_{E_1,E_2}$ is not defined over $\Q$, then it is defined over $L$ (more precisely $\Q(\sqrt{-d})$). We will show that the field $L$ is $\Q(i)$. 
 Composing the following isogenies of $E_1$ and $E_2$
  \[ 
 E_1 \overset{[2i]}{\longrightarrow} E_2  \overset{[i]}{\longrightarrow} E_1,
 \]
 we obtain the endomorphism $[-2]_{E_1}$ of $E_1$ which is defined over $\Q$. This implies that $[i]_{E_2,E_1}$ is defined over $L$. Then the endomorphism
        \[
          E_1 \overset{[2]}{\longrightarrow} E_2 \overset{[i]}{\longrightarrow} E_1 \hspace{2mm} 
          \]
  is defined over $L$. On the other hand, the endomorphism given above is defined over $\Q(i)$ since the field of definition of $E_1 \overset{[i]}{\longrightarrow} E_1$ is $\Q(i)$ and $E_1\overset{[2]}{\longrightarrow} E_1$ is defined over $\Q$. Thus $L$ must equal to $\Q(i)$. (Notice here that the endomorphism $[2]_{E_1}$ is the multiplication by $2$ map on $E_1$ and hence its field of definition is $\Q$.)
  
One can do a similar discussion for the case $[2i]_{E_1,E_2}$ is defined over $\Q$.

\end{proof}

 \begin{prop}\label{20} 
       Let $E$ be an elliptic curve over $K=\Q(i)$ and let $F$ be the maximal elementary abelian 
       $2$-extension of $K=\Q(i)$. Then $E(F)$ cannot have a cyclic $K$-rational subgroup of order $20$. 
   \end{prop}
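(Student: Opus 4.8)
The plan is to use the explicit list of non-cuspidal points on $X_0(20)(K)$ to reduce the statement to a finite check, and then to trace the field of definition of the $5$-part of the kernel. By \S\ref{modular curve 20} the curve $X_0(20)(K)$ has exactly six non-cuspidal points, so up to $K$-isomorphism only finitely many pairs $(E,C)$ occur with $C$ a $K$-rational cyclic subgroup of order $20$; organizing these through the lattice description, they arise from three cyclic $20$-isogenies, for instance $[6+2i]_{E_1,E_2}\colon E_1=\Co/[1,i]\to E_2=\Co/[1,2i]$ and its companions. First I would invoke Proposition \ref{i} to conclude that every such isogeny, lying in $\Hom(E_1,E_2)$, is defined over $K=\Q(i)$, so that each kernel $C$ is indeed $K$-rational.

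The key structural observation is that the $2$-part and the $5$-part of $C$ behave very differently with respect to $F$. Writing $C=C_4\oplus C_5$ with $C_4=C[4]$ and $C_5=C[5]$, the Galois action on the $K$-rational cyclic group $C_4$ factors through $\Aut(C_4)=(\Z/4\Z)^\times=\{\pm1\}$; hence $K(C_4)/K$ has degree at most $2$ and is automatically contained in $F$, since every quadratic extension of $K$ lies in $F$. Thus $C\subseteq E(F)$ if and only if $C_5\subseteq E(F)$, and the whole proposition reduces to showing that the cyclic $5$-subgroup $C_5$ is never defined over $F$.

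To handle the $5$-part I would use the complex multiplication. Since $5=(2+i)(2-i)$ splits in $\Z[i]$ and $E_1$ has CM by $\Z[i]$, the subgroup $C_5$ is the kernel of $[2+i]_{E_1}$ (or of $[2-i]_{E_1}$), an endomorphism defined over $K$; the Galois action on $C_5$ is then a character $\chi\colon\Gal(\bar{K}/K)\to(\Z[i]/(2+i))^\times=\F_5^\times\cong\Z/4\Z$, and $C_5\subseteq E(F)$ is equivalent to $\chi^2=1$. I would show instead that $\chi$ is surjective, so that $K(C_5)/K$ is a \emph{cyclic} quartic extension: computing the $x$-coordinates of $C_5$ on the model $y^2=x^3-x$ leads to $5x^4-2x^2+1=0$, whose solutions satisfy $x^2=\tfrac{1\pm 2i}{5}$, and since $\tfrac{1+2i}{5}$ is not a square in $K$ (its norm $\tfrac15$ has no square root in $\Q$), the $x$-coordinates already generate a genuine quadratic extension of $K$, forcing $\chi$ to have order $4$. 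Because $\Gal(F/K)$ has exponent $2$, a cyclic quartic extension of $K$ cannot sit inside $F$; hence $C_5\not\subseteq E(F)$, and therefore $C\not\subseteq E(F)$. Quadratic twists are isomorphic to $E$ over a subfield of $F$ and give the same conclusion.

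The step I expect to be the main obstacle is the last one: establishing that the $(2+i)$-division field is the full cyclic quartic extension---equivalently that $\chi$ has order $4$ rather than order $\le 2$---uniformly for all of the relevant curves. Because $j(E_1)=1728$ carries extra automorphisms, one must ensure that the non-square computation is carried out for each of the six non-cuspidal points (including the twists these extra automorphisms produce), so that in every case the $5$-part genuinely escapes $F$.
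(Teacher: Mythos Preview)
Your reduction to the $5$-part $C_5$ is correct and parallels the paper's implicit use of Lemma~\ref{Laska-Lorenz}. The explicit computation on $y^2=x^3-x$ showing that $K(C_5)/K$ is cyclic quartic is also a valid route for the curve $E_1$. However, there is a genuine gap: you treat only $E_1$ (with $j=1728$) and its twists, apparently under the impression that the six non-cuspidal $K$-points of $X_0(20)$ all arise this way. They do not. Writing out the three cyclic $20$-isogenies $\lambda_1=[4+2i]_{E_2}$, $\lambda_2=[6+2i]_{E_1,E_2}$, $\lambda_3=[6-2i]_{E_1,E_2}$ together with their duals, one sees that four of the six points have source curve $E_2=\Co/[1,2i]$, with $j(E_2)=11^3$ and endomorphism ring the non-maximal order $\Z[2i]$. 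Your $(2+i)$-kernel argument is carried out only on $E_1$, and no argument is offered for $E_2$; the ``extra automorphisms'' of $j=1728$ do not account for the $E_2$ points.

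The paper separates the two $j$-invariants and treats them by different, shorter methods. For $j=1728$ it combines Lemma~\ref{Laska-Lorenz} (a $5$-point in $E(F)$ forces a $K$-rational $5$-point on some quadratic twist) with Theorem~\ref{Burt-0,1728}, which rules this out for every curve of $j$-invariant $1728$ at once and sidesteps any division-polynomial computation. For $j=11^3$ it uses a reduction argument: the model $y^2=x^3-11x-14$ has good reduction at the inert prime $3$, and $\#\tilde E(\F_{81})=64$ is coprime to $5$, so no $5$-torsion can lie in $E(F)$; since $j\neq 0,1728$ all twists are quadratic and hence $F$-isomorphic to this model. Your character approach could in principle be extended to $E_2$ (note $5=(1+2i)(1-2i)$ already in $\Z[2i]$), but you would still owe the non-square verification for that curve, and the paper's reduction argument is considerably cleaner.
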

      
      \begin{proof}
                       
            Let $E_1$ and $E_2$ be the elliptic curves with the endomorphism rings                           
                             \[ 
                             \Z[i] \hspace{2mm} \text{and} \hspace{2mm} \Z[2i] 
                             \]
            respectively and let $\lambda_i$ be the following endomorphisms                           
                            \[ 
                            \lambda_1=[4+2i] : E_2 \to E_2, 
                            \]
                            \[ 
                            \lambda_2= [6+2i]: E_1\to E_2, 
                            \]
                            \[ 
                            \lambda_3= [6-2i]: E_1 \to E_2. 
                            \]   
for $i=1,2,3$. Let $\bar{\lambda_i}$ denote the dual of the isogeny $\lambda_i$ for each $i=1,2,3$. 

The elliptic curves $E_1$ and $E_2$ have $j$-invariant in $\Q$ since their endomorphism rings are orders of class number one in the field $\Q(i)$ and hence the isogeny $\lambda_1$ is 
              defined over $\Q(i)$ by \cite[Theorem~2.2]{silverman-advance} and $\lambda_2, \lambda_3$ are defined over $\Q(i)$ by Proposition \ref{i}.              
            Consequently, the $6$ non-cuspidal points (see \S\ref{modular curve 20}) on $X_0(20)$ over $K$ represent the elliptic curves $E_1$ and $E_2$ up to isomorphism, that is, if an elliptic curve defined over $K$ has a cyclic $K$-rational $20$-isogeny, it is isomorphic to $E_1$ or $E_2$.
    
    Let $E/K$ be an elliptic curve such that $E(F)$ contains a cyclic $K$-rational subgroup $C$ of order $20$. Then $E$ is isomorphic to $E_1$ or $E_2$.
                        
  Assume $E$ is isomorphic to $E_1$, then $E$ has $j$-invariant $1728$ and a quadratic twist $E^{(d)}(K)$ of $E$ has a point of order $5$ by Lemma \ref{Laska-Lorenz}, a contradiction to Theorem \ref{Burt-0,1728} since $E^{(d)}$ also has $j$-invariant $1728$.                   
 
  Assume now that $E$ is isomorphic to $E_2$. The $j$-invariant of $E_2$ is given in \cite{Cox} as 
            \[
            j(E_2)=11^3.
            \]
   We find a model for the elliptic curve with this $j$-invariant in \cite{LMFDB} as:
       \[   
           E': y^2=x^3-11x-14.
       \]
  The elliptic curve with the given equation has good reduction at $3$ and it has $64$ points over the finite field $F_{81}$. Since $E'(F)[5]$ has to inject into $E'(\F_{81})$ and $5$ does not divide $64$, $E'(F)$ can not have a point of order $5$.
  
 Now, the elliptic curve $E$ is a quadratic twist of $E'$ and $E'(F) \simeq E(F)$, hence we arrive at a contradiction.      
 \end{proof}
         
We use the following result to show that there exists no elliptic curve $E$ defined over $\Q(i)$ such that $E(F)$ contains a $K$-rational subgroup isomorphic to $\Z/2\Z \oplus \Z/32\Z$ (Proposition \ref{can not happen}). 

 \begin{prop}\label{j}
       Let $E$ be an elliptic curve defined over $\Q(i)$. Assume $E$ has a cyclic isogeny of degree 
       $64$ defined over a quadratic extension of $\Q(i)$, then the j-invariant of $E$ is integral.
 \end{prop}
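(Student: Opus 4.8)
The plan is to reinterpret the hypothesis as a point on the modular curve $Y_0(64)$ and to feed it into Theorem~\ref{64}. A cyclic $64$-isogeny on $E$ with kernel $C$ defined over a quadratic extension $L$ of $K=\Q(i)$ is exactly the datum of a non-cuspidal point $(E,C)$ of $Y_0(64)(L)$. By Proposition~\ref{modular curves}, $E$ has no $K$-rational cyclic subgroup of order $64$, so $C$ is defined over a proper extension and $[L:K]=2$. Using the affine model $Y_0(64)\colon x^4+y^4=1$ from \cite{Kenku64}, the point $(E,C)$ corresponds to a solution $(x_0,y_0)\in L^2$ of $x^4+y^4=1$.

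First I would confirm that the genuine (non-cuspidal) points are precisely the non-trivial solutions. Ogg's formula gives $X_0(64)$ exactly twelve cusps, and the smooth projective model here is the Fermat quartic $x^4+y^4=z^4$ of genus $3$; I expect the twelve cusps to be the eight trivial affine solutions $(\pm 1,0),(\pm i,0),(0,\pm 1),(0,\pm i)$ together with the four points at infinity $y=\zeta x$, $\zeta^4=-1$. Granting this, the solution $(x_0,y_0)$ attached to the elliptic curve $E$ must be non-trivial, so Theorem~\ref{64} applies and forces $L=K(\sqrt{-7})$ with $(x_0,y_0)$ one of the finitely many listed solutions $(\epsilon_1\tfrac{1+\epsilon_3\sqrt{-7}}{2},\epsilon_2\tfrac{1-\epsilon_3\sqrt{-7}}{2})$.

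It then remains to read off $j(E)$ at these finitely many points. Concretely, I would compose the chosen model with the forgetful map $Y_0(64)\to X(1)$, $(x,y)\mapsto j(E)$, and evaluate (for instance on Magma, in keeping with the paper's computations); since $E$ is defined over $K$, the value $j(E)$ lies in $K=\Q(i)$ and is fixed by $\Gal(L/K)$. The occurrence of $\sqrt{-7}$ strongly suggests that these points carry complex multiplication by an order in $\Q(\sqrt{-7})$ (indeed $2=\tfrac{1+\sqrt{-7}}{2}\cdot\tfrac{1-\sqrt{-7}}{2}$ splits, so a sixth power of $\tfrac{1+\sqrt{-7}}{2}$ furnishes a cyclic endomorphism of degree $64$), in which case $j(E)$ is a CM $j$-invariant and hence automatically an algebraic integer; either the CM identification or a direct evaluation of the finite list yields integrality. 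The main obstacle I anticipate is the bookkeeping that connects the $(x,y)$-coordinates of the Fermat model to $j(E)$---pinning down the forgetful morphism in these coordinates and confirming the cusp/trivial-solution dictionary---after which integrality is immediate. Theorem~\ref{64} carries all the substantive weight by reducing the problem to a finite, explicit set of points.
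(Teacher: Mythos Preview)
Your proposal follows the same strategy as the paper: translate to $Y_0(64)$ via Kenku's Fermat model, invoke Theorem~\ref{64} to confine all non-cuspidal points over quadratic extensions of $\Q(i)$ to the $32$ explicit points in $\Q(i,\sqrt{-7})$, and then argue that each of these carries CM so that $j$ is an algebraic integer. Your cusp bookkeeping is correct (the twelve cusps are indeed the eight trivial affine solutions together with the four points where $z=0$, in line with Rohrlich's terminology cited in the paper).

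Where the paper is more thorough is in the final CM identification, which your sketch underestimates. Your heuristic with $\bigl(\tfrac{1+\sqrt{-7}}{2}\bigr)^6$ produces only the curve with CM by the maximal order $\Z\bigl[\tfrac{1+\sqrt{-7}}{2}\bigr]$, and that does not account for all $32$ points. The paper needs three distinct CM curves $E_1,E_2,E_3$, with endomorphism rings $\Z\bigl[\tfrac{1+\sqrt{-7}}{2}\bigr]$, $\Z[\sqrt{-7}]$, and $\Z[2\sqrt{-7}]$ respectively; for $E_3$ a separate ring class field computation is required to show it can be defined over $\Q(i)$. The paper then exhibits four explicit cyclic $64$-isogenies among these curves defined over $\Q(i,\sqrt{-7})$, giving eight points on $Y_0(64)$ (isogeny and dual), and uses the Atkin--Lehner involution $W_{64}$ together with the $\Gamma_0(64)$-automorphisms $(x,y)\mapsto(\zeta x,y)$ for $\zeta^4=1$ to sweep out the remaining $24$. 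Your alternative of evaluating the forgetful map $Y_0(64)\to X(1)$ on Magma would also work in principle, but the paper avoids writing down that map in Fermat coordinates by doing the CM accounting instead.
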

 
 \begin{proof}
     Kenku \cite[Proposition~2]{Kenku64} shows the modular curve $X_0(64)$ has the affine equation
                         \[ 
                         x^4+y^4=1 
                         \]
      and that it has $12$ cusps; the points at infinity \cite[p.1]{Rohrlich}.
      
      Theorem \ref{64} proves that any point of $Y_0(64)$ defined over a quadratic 
      extension of $\Q(i)$ is in $\Q(\sqrt{-7},i)$ and there are $32$ such points.    
        
        Let $E_i$ for $i=1,2$  be the elliptic curves with complex multiplication by the orders 
        $ \Z[\frac{1+\sqrt{-7}}{2}]$ and $\Z[\sqrt{-7}]$ respectively and let $E_3$ be the elliptic 
        curve associated to the 
        lattice $[1,2\sqrt{-7}]$. Notice that $E_3$ also has complex multiplication by the order 
        $\Z[2\sqrt{-7}]$. Now let $\alpha_i$ for $i=1,2,3,4$ as follows:                       
                        \[
                         \alpha_1=\frac{9+5\sqrt{-7}}{2}, \hspace{3mm}
                               \alpha_2= 1+3\sqrt{-7},  
                          \]
                        \[    
                         \alpha_3= 6+2\sqrt{-7}, \hspace{3mm}
                               \alpha_4= 10+2\sqrt{-7}
                         \]
    Then $\alpha_i$ defines an endomorphism $\lambda_i=[\alpha_i]_{E_i}$ of $E_i$ for every $i=1,2,3$ and $\alpha_4$ defines 
    an isogeny $\lambda_4=[\alpha_4]_{E_2,E_3}$ from $E_2$ to $E_3$. Let $\bar{\lambda_i}$ be the dual of $\lambda_i$ for each $i=1,2,3,4$ and let $C_i$ be the kernel of $\lambda_i$. Similarly let $\bar{C_i}$ be the 
    kernel of $\bar{\lambda_i}$. The $j$-invariants of $E_1$ and $E_2$ are given in \cite{Cox} as  
                  \[
                   j(E_1)=-3^35^3 \hspace{2mm} \text{and} \hspace{2mm} j(E_2)= 3^35^317^3. 
                    \]
   The isogeny $\lambda_i$ is cyclic of degree $64$ for each $i$ and \cite[Theorem~2.2]{silverman-advance} tells 
     us that $\lambda_1$ and $\lambda_2$ are defined over $\Q(\sqrt{-7})$. See also 
     \cite[Lemma~1]{Kenku64}.              
     
     The class number of the order $\mathcal{O}=\Z[2\sqrt{-7}]$ can be computed by the formula in 
     \cite[Theorem~7.24]{Cox}; the conductor $f$ of $\mathcal{O}$ is $4$ and the class number of 
     $\mathcal{O}$ is $h=2$, hence $E_3$ can be defined over a quadratic extension of 
     $\Q$. Let $L=\Q(\sqrt{-7})(j(E_3))$ be the ring class field of the order $\mathcal{O}$. We know the 
     following facts: (See also \cite[Proposition~9.5, p.184]{Cox}.)
       \begin{enumerate}
           \item The minimal polynomial $f$ of $j(E_3)$ is in $\Z[x]$ and it is of degree 
                    $h(\mathcal{O})=2$.
           \item All primes of $\Q(\sqrt{-7})$ that ramify in $L$ must divide $4\Z[\frac{1+\sqrt{-7}}{2}]$.
        \end{enumerate}   
     First property tells us that $L=\Q(\sqrt{-7}, \sqrt{d})$ for some square-free $d\in \Q$. Consider the following diagram of field extensions. 
    \[
     \begin{tikzpicture}[node distance = 1.5cm, auto][scale=0.05]
      \node (Q) {$\mathbb{Q}$};
      \node (E) [above of=Q, left of=Q] {$\Q(\sqrt{-7})$};
      \node (F) [above of=Q, right of=Q] {$\Q(\sqrt{d})$};
      \node (L) [above of=Q, node distance = 3cm] {$L=\Q(\sqrt{-7},\sqrt{d})$};
      \draw[-] (Q) to node {} (E);
      \draw[-] (Q) to node {} (F);
      \draw[-] (E) to node {} (L);
      \draw[-] (F) to node {} (L);
      \end{tikzpicture}
      \]    
    
     If a prime in $\Q$ ramifies in $\Q(\sqrt{d})$, then it is either $2$ (by the second property given above) or $7$ (by the fact that the only prime ramifies in $\Q(\sqrt{-7})$ is $7$). Since $L$ contains $\sqrt{-7}$, we may assume that $d$ and $7$ are relatively prime. Hence $d$ is $\pm2$ or $-1$ and      
     
            \[ 
            L=\Q(\sqrt{-7},\sqrt{\pm2}) \hspace{2mm} \text{or} \hspace{2mm} L=\Q(\sqrt{-7},i). 
            \]         
    We will use \cite[Theorem~9.2, p.180]{Cox} to determine $L$. Let $p=29$. Then $p$ can be 
    written as $p=1^2+28.1^2$. We have the following:
    \begin{enumerate}
     \item  $-28$ is a quadratic residue modulo $29$.
    \item  The polynomial $x^2+1$ has a solution modulo $p$ where as $x^2 \pm 2$ 
    does not. We see that 
        \[  
           L=\Q(\sqrt{-7},i).
        \] 
        
     \end{enumerate}   
    This shows that we may assume $E_3$ is defined over the field $\Q(i)$. Thus by Theorem \cite[Theorem~2.2]{silverman-advance}, 
     $\lambda_3$ is defined over $\Q(\sqrt{-7},i)$. Now we will show that $\lambda_4$ 
     is also 
     defined over $\Q(\sqrt{-7},i)$.
     
     Similar to Proposition \ref{i}, we can show that $\text{Hom}(E_2,E_3)$ is isomorphic to 
     $2(\Z[\sqrt{-7}])$ and it is generated by the isogenies $[2]_{E_2,E_3}$ and $[2\sqrt{-7}]_{E_2,E_3}$. It follows that these 
     isogenies cannot be both defined over $\Q(i)$ since they generate $\text{Hom}(E_2,E_3)$. 
     Otherwise $[10+2\sqrt{-7}]_{E_2,E_3}$ would be a cyclic $64$-isogeny over $\Q(i)$ but there are no such isogenies 
     defined over $\Q(i)$ by Proposition~\ref{modular curves}. Since $[2\sqrt{-7}]_{E_2}$ is an endomorphism 
     of $E_2$ defined over $\Q(\sqrt{-7})$ (and not over $\Q$), similar to Proposition \ref{i}, we see that either $[2]_{E_2,E_3}$ or 
     $[2\sqrt{-7}]_{E_2,E_3}$ is defined over $\Q(i,\sqrt{-7})$.
     
     Thus $(E_i, C_i)$ and $(E_i,\bar{C_i})$ are rational over $L$ and they represent $8$ 
     different points on $Y_0(64)(L)$.
     
     Let $(E_i,C_i)$ correspond to the point $(X(w_i),Y(w_i))$ on                                      
                                         \[
                                          x^4+y^4=1. 
                                          \]
     Denote by $W_{64}$ the modular transformation corresponding to the matrix     
                        \[  
                         \begin{bmatrix} 0 & -1 \\ 64 & 0  \end{bmatrix}.
                          \]  
       Then $W_{64}$ acts as an involution on $X_0(64)$. Furthermore, we know by 
       \cite[Proof of Lemma 1]{Kenku64} that $W_{64}$ sends $(E_i,C_i)$ to $(E_i,\bar{C_i})$. Using the 
       transformation formula for $\eta(z)$, Kenku shows that $W_{64}$ maps $(x,y)$ to $(y,x)$. Hence
       if $(E_i,C_i)$ correspond to the point $(X(w_i),Y(w_i))$ on 
                                     \[ 
                                     x^4+y^4=1, 
                                     \]
       then $(E_i, \bar{C_i})$ correspond to $(Y(w_i),X(w_i))$. Using the transformation 
         formula for $\eta(z)$ again, we see that
              \[ 
              \begin{bmatrix} 1 &0 \\ 16 & 1 \end{bmatrix} : \hspace{2mm} (x,y) \mapsto (-ix,y) 
              \] 
              \[  
              \begin{bmatrix} 1 &0\\ 32 & 1\end{bmatrix} : \hspace{2mm} (x,y) \mapsto (-x,y)
               \]
              \[ 
               \begin{bmatrix} 1 &0\\ 48 & 1\end{bmatrix} : \hspace{2mm} (x,y) \mapsto (ix,y). 
               \]
     Since these matrices are in $\SL_2(\Z)$, their actions do not change the 
     isomorphism class of the elliptic curve corresponding to the given point on $Y_0(64)$, hence the remaining $24$ 
     points on $Y_0(64)$ (over $(K(\sqrt{-7}))$) correspond to some pair $(E,C)$ where $E$ is isomorphic to 
     $E_1,E_2$ or $E_3$. 
    
    This proves that an elliptic curve with a cyclic isogeny of degree $64$ over a quadratic extension of 
    $\Q(i)$ has complex multiplication and thus its j-invariant is integral.    
     
 \end{proof}
    
    \begin{remark}
       Over $K=\Q(\sqrt{-3})$, we find a point $(2/\sqrt{5},\sqrt{-3}/\sqrt{5})$ on the curve $x^4+y^4=1$ 
       and a computation on Magma shows that it produces an elliptic curve with a non-integral
        $j$-invariant.
    \end{remark}
        
    \begin{prop} \label{can not happen} 
         Let $K=\Q(\sqrt{d})$ for $d=-1$ or $-3$ and let $F$ be the maximal elementary abelian extension 
         of $K$. Assume that $E$ is an  elliptic curve defined over $K$. Then $E(F)$ does not contain a 
         rational subgroup isomorphic to one of the following groups:            
                    \begin{align*} 
                       \Z/4\Z\oplus \Z/2\Z \oplus \Z/3\Z \oplus  \Z/3\Z, & \hspace{3mm}
                       \Z/2\Z \oplus \Z/8\Z \oplus \Z/3\Z, \\  \Z/2\Z \oplus \Z/4\Z \oplus \Z/5\Z.&     
                    \end{align*}
         Moreover, if $K=\Q(i)$, then $E(F)$ does not contain a $K$-rational subgroup isomorphic to the 
          group
                                       \[ 
                                       \Z/32\Z\oplus \Z/2\Z.
                                       \]
    \end{prop}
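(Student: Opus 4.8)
The plan is to assume, in each case, that $E(F)$ contains a $K$-rational subgroup $G$ isomorphic to the given group and to extract from $G$ a cyclic subgroup whose order and field of definition contradict one of the results already established (Propositions \ref{modular curves}, \ref{20}, \ref{j} and Theorem \ref{modular curves-Burton}). The first observation is that each Sylow subgroup of $G$ is characteristic, hence Galois-stable, so it is itself a $K$-rational subgroup of $E(F)$. The recurring subtlety is that the maximal cyclic subgroup of a $2$-Sylow such as $\Z/4\Z \oplus \two$ or $\Z/8\Z \oplus \two$ is \emph{not} characteristic: there are exactly two cyclic subgroups of top order, and $\Gal(F/K)$ may interchange them. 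Consequently one can only guarantee such a cyclic subgroup over at most a quadratic extension $K'$ of $K$, which is precisely why the ``defined over a quadratic extension'' formulation of Theorem \ref{modular curves-Burton} is what is needed.

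For $\two \oplus \Z/8\Z \oplus \Z/3\Z$ the argument is cleanest. The $3$-Sylow is characteristic, giving a $K$-rational $C_3$, while the $2$-Sylow $\Z/8\Z \oplus \two$ contains a cyclic subgroup $C_8$ of order $8$ defined over at most a quadratic extension $K'$ of $K$. Then $C_3 + C_8$ is a cyclic $K'$-rational subgroup of order $24$, contradicting Theorem \ref{modular curves-Burton} for both $\Q(i)$ and $\Q(\sqrt{-3})$. For $\two \oplus \Z/4\Z \oplus \Z/5\Z$ one proceeds similarly: the characteristic $5$-Sylow $C_5$ together with a cyclic order-$4$ subgroup produces a cyclic subgroup of order $20$ over at most a quadratic extension of $K$. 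When $K=\Q(\sqrt{-3})$ this immediately contradicts Theorem \ref{modular curves-Burton}. When $K=\Q(i)$ the order-$20$ case is not covered by Theorem \ref{modular curves-Burton}, so here I would instead use Lemma \ref{Laska-Lorenz} to twist $E$ so that $C_5$ is rational already over $K$, and then use the explicit list of the six non-cuspidal points of $X_0(20)(K)$ from \S\ref{modular curve 20} to force $E$ into the CM family $E_1,E_2$ of Proposition \ref{20}; that proposition then supplies the contradiction.

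The group $\Z/4\Z \oplus \two \oplus \Z/3\Z \oplus \Z/3\Z$ is genuinely different, since its largest cyclic subgroup has order only $12$ and no forbidden cyclic order can be read off directly. Here the plan is to exploit that $G$ contains the full $3$-torsion $E[3]$, which is therefore $K$-rational. The image of $\Gal(F/K) \to \GL_2(\F_3)$ is an elementary abelian $2$-group whose determinant is the mod-$3$ cyclotomic character; for $K=\Q(\sqrt{-3})$ this forces the image into $\{\pm I\}$, so every line of $E[3]$ is $K$-rational, and for $K=\Q(i)$ a $K$-rational line still exists. Combining a $K$-rational cyclic $3$-subgroup with the order-$4$ point and chasing the resulting $3$- and $2$-power isogenies, I would aim to produce a $K$-rational cyclic subgroup of order $36$, contradicting Proposition \ref{modular curves}; alternatively, reducing the realized finite-level torsion at a prime of good reduction and using that the residue field is at most quadratic over that of $K$ (observation (4) of \S3) should yield a numerical contradiction to the simultaneous presence of $E[3]$ and a point of order $4$. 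Identifying which isogeny produces the order-$36$ structure is the first real obstacle.

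Finally, for $\two \oplus \Z/32\Z$ over $K=\Q(i)$, the point is to feed the situation into Proposition \ref{j}. Writing $P$ for the point of order $32$ and noting that $16G$ is a characteristic (hence $K$-rational) $2$-torsion subgroup, I would run a chain of $K$-rational $2$-isogenies, passing to the dual isogenies so as to climb rather than descend the $2$-power tower, until I reach a curve carrying a cyclic subgroup of order $64$ defined over a quadratic extension of $\Q(i)$. Proposition \ref{j} (which rests on Theorem \ref{64}) then forces that curve to have integral $j$-invariant, i.e. complex multiplication, and the finitely many CM $j$-invariants are eliminated using Theorem \ref{Burt-0,1728} together with reductions of the corresponding models, as in the proofs of Propositions \ref{20} and \ref{j}. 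The hard part will be manufacturing a genuinely cyclic $64$-isogeny out of the non-cyclic structure $\two \oplus \Z/32\Z$: since this group has exponent $32$, the degree-$64$ isogeny it defines is not cyclic, so the cyclic $64$-isogeny must instead be produced on an isogenous curve by a careful analysis of how the order-$32$ point lifts through the dual $2$-isogenies.
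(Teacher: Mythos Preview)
For the first three groups your plan coincides with the paper's, which simply invokes \cite[Proposition~2.4]{Laska-Lorenz}: split off the characteristic odd Sylow, obtain a long cyclic $2$-part over at most a quadratic extension via their Lemma~1.5, and contradict Theorem~\ref{modular curves-Burton}, Remark~\ref{explain Burton}, Proposition~\ref{modular curves}, or Proposition~\ref{20}. Your hesitation in the $\Z/4\Z\oplus\two\oplus\Z/3\Z\oplus\Z/3\Z$ case is exactly where the Laska--Lorenz isogeny manipulation (quotienting by one $K$-rational $3$-line and pulling back through the dual to manufacture a cyclic $9$) produces the order-$36$ cycle you are looking for and contradicts Proposition~\ref{modular curves}.

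For $\two\oplus\Z/32\Z$ over $\Q(i)$ your construction of a cyclic $64$-isogeny on $E'=E/S$ over a quadratic extension, followed by Proposition~\ref{j}, is exactly what the paper does (via Laska--Lorenz's Lemma~1.5 to get the index-$2$ stabilizer of $L$ and $S$, then $L'=\hat\phi^{-1}(L)$). The divergence is in the endgame, and here there is both a slip and a missed simplification. The slip: integral $j$-invariant does \emph{not} mean complex multiplication; Proposition~\ref{j} only asserts integrality of $j(E')$. Its proof happens to exhibit CM models, but you would then have to chase twists and the $2$-isogeny back to $E$, and Theorem~\ref{Burt-0,1728} does not apply since the relevant $j$-values are $-3^35^3$, $3^35^317^3$, and $j(E_3)$, none equal to $0$ or $1728$. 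The paper instead uses integrality of $j(E')$ directly: it forces $E'$, and hence the isogenous curve $E$, to have everywhere good or additive reduction. At the prime $(2-i)$ of $\Q(i)$ the residue field of the finite subextension $L=K(V)$ is at most $\F_{25}$, so in the good case the Hasse bound $|\tilde E(\F_{25})|\le 36<64$ gives the contradiction, while in the additive case $\tilde E_{\mathrm{ns}}(\F_{25})\simeq\F_{25}^{+}$ forces $E_0(L)_{(2)}$ to be trivial and Tate's bound $[E(L):E_0(L)]\le 4$ finishes it. This reduction argument is the key idea you are missing; it avoids enumerating CM curves entirely.
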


      \begin{proof}
          For each group except $\Z/32\Z\oplus \Z/2\Z$ given in the statement of the proposition, the proof 
          follows the proof of \cite[Proposition 2.4]{Laska-Lorenz} by using Proposition \ref{modular 
          curves}, Proposition \ref{20}, Theorem \ref{modular curves-Burton}, and Remark \ref{explain Burton}.
          
          Assume that $E(F)$ has a $K$-rational subgroup $V\simeq \Z/32\Z \oplus \Z/2\Z
          $. We will slightly modify the argument given in the proof of Proposition 2.4 in 
          \cite{Laska-Lorenz}. Let $A$ denote the subgroup of $\Aut(V)$ given by the action of 
          $\Gal(\bar{K}/K)$ on 
          $V$. Writing $V$ as $L \oplus S$, with $L\simeq \Z/32\Z$ and $S\simeq \Z/2\Z$, we know by 
          Lemma 1.5 in \cite{Laska-Lorenz} that there exist a subgroup $A_0$ of $A$ with index $2$ which stabilizes $L$ and $S$. Now                          
                                \[   
                                 \phi: E\rightarrow E'=E/S  
                                \]
         gives us a pair $(E',L')$ where $L'={\phi'}^{-1}(L)$ is of order $64$ and $\phi'$ is the 
          dual isogeny of $\phi$. Thus, the pair $(E',L')$ corresponds to a point on 
          $Y_0(64)$, defined over a quadratic extension of $K$. By Lemma \ref{j}, the $j$-invariant of $E'$ is integral and thus $E'$ has either 
          good or additive reduction \cite[Proposition 5.5, p.181]{Silverman}.  By 
          \cite[Corollary 7.2, p.185]{Silverman}, $E$ also has either good or additive reduction.
                   
          Let $L$ be the field generated by $V \subset E(F)$, let $\mathfrak{P}$ be a prime of 
          $\mathcal{O}_L$ above $\mathfrak{p}=2-i$ and let $\tilde{E}$ denote the reduction of $E$ at 
          prime $\mathfrak{P}$. 
          If $E$ has good reduction at $\mathfrak{P}$, then $E(L)_{(2)}$ injects into $\tilde{E}(\F_{25})$. By the 
          Weil conjectures, we see that $\left |\tilde{E}(\F_{25})\right |\leq 36$ which is not possible since 
          $E(L)$ must have at least $64$ points. 
          Assume $E$ has additive reduction at $\mathfrak{P}$. Let $\tilde{E}$ denote the reduction of 
            $E$ and let $\tilde{E}_{\text{ns}}$ denote the set of non-singular points of $\tilde{E}$.  
            Define                              
                        \[ 
                        E_1(L)=\{ P \in E(L) : \tilde{P}=\tilde{O}     \}       
                        \]
             and     
                         \[ 
                         E_0(L)=\{ P \in E(L) :  \tilde{P} \in \tilde{E}_{\text{ns}} \}
                         \]
By 
            \cite[{Proposition 3.1(a), p.176}]{Silverman}, $E_1(L)$ has no non-trivial points of order $2^k$  
            for any $k$. Then $E_0(L)_{(2)}$ injects into $\tilde{E}_{ns}(\F_{25})$. However, 
           $\tilde{E}_{ns}(\F_{25})$ is isomorphic to the additive group $\F_{25}$ and hence 
           $E_0(L)_{(2)}$ is trivial.  We know by \cite[{Addendum to Theorem 3, \textsection{6}}]{Tate} that                                  
                                       \[ 
                                        [E(L) :E_0(L)]  \leq 4.
                                       \]            
             Therefore, $E(L)_{(2)}$ has at most $4$ elements which contradicts our 
             assumption that $E(L)$ has a subgroup isomorphic to $\Z/2\Z  \oplus  \Z/32\Z$.

       \end{proof}

 
  \begin{theorem} \label{Laska}
  Let $K$ be a quadratic cyclotomic field and let $F$ be the maximal elementary abelian extension of $K$.   
  Assume that $E$ is an elliptic curve defined over $K$.
            \begin{enumerate} 
                \item If $K=\Q(i)$,
                   then $E(F)_{\text{tors}}$ is isomorphic to one of the following groups:
                      \begin{align*}
                          & \Z/2^{b+r}\Z \oplus \Z/2^b\Z & &    (b=1,2,3  \hspace{1mm} \text{and}\hspace{1mm} 
                          r=0,1,2,3) & &\\
                          & \Z/2^{b+r}\Z \oplus \Z/2^b\Z \oplus \Z/3\Z & &  (b=1,2,3 \hspace{1mm} \text{and}
                          \hspace{1mm} r=0,1) & &\\
                          & \Z/2^{b}\Z \oplus \Z/2^b\Z \oplus \Z/3\Z \oplus \Z/3\Z & &  ( b=1,2,3) & &\\
                          & \Z/2^{b}\Z \oplus \Z/2^b\Z \oplus \Z/5\Z & &  (b=1,2,3) & &
                       \end{align*}
                      or ${1}, \Z/3\Z$, $ \Z/5\Z$, $ \Z/7\Z$, $ \Z/9\Z$, $ \Z/15\Z$, and $\Z/3\Z \oplus \Z/3\Z$ .
               
               \item 
                   If $K=\Q(\sqrt{-3})$, then $E(F)$ is either isomorphic to one of the above groups or 
                       \[ \Z/2\Z \oplus \Z/32\Z, \hspace{2mm} \Z/4\Z \oplus \Z/64\Z  \]

            \end{enumerate}
    \end{theorem}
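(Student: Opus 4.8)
The plan is to build the two lists by determining the odd and the $2$-primary parts of $E(F)_{\mathrm{tors}}$ separately and then discarding the incompatible products, following the template of \cite[Proposition~2.4]{Laska-Lorenz} but feeding in the $K$-specific inputs proved above. Write $E(F)_{\mathrm{tors}} = T_2 \oplus T_{2'}$. The odd part $T_{2'}$ is settled at once by Proposition~\ref{odd}: it is one of $\Z/N\Z$ with $N \in \{1,3,5,7,9,15\}$ or $\Z/3\Z \oplus \Z/3\Z$. For the $2$-part, I would first record a structural reduction coming from the observations of Section~3: since $F$ contains a square root of every element of $\mathcal{O}_K$, the defining cubic $f$ of $E$ either stays irreducible over $F$ (no $2$-torsion) or acquires its full set of roots in $F$. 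Hence $T_2$ is either trivial or contains the full $2$-torsion, so that $T_2 = \{1\}$ or $T_2 \simeq \Z/2^{a}\Z \oplus \Z/2^{b}\Z$ with $a \ge b \ge 1$; this is exactly why no purely cyclic $2$-group appears in the statement.

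Next I would bound $a$ and $b$. Because $E[2^b] \subseteq E(F)$, the Weil pairing forces $\zeta_{2^b}\in F$; as $\zeta_8 \in F$ but $\zeta_{16}\notin F$ (the extension $K(\zeta_{16})/K$ is cyclic of order $4$, hence not elementary abelian), this gives $b \le 3$ for both fields. For the exponent, I would use that $E(F)_{\mathrm{tors}}$ is $\Gal(\bar K/K)$-stable, so that every subgroup obtained from $T_2$ by the functorial operations $V\mapsto V[2^k]$ and $V\mapsto 2^kV$ is again $K$-rational. In particular $2^{b}T_2 \simeq \Z/2^{a-b}\Z$ is a $K$-rational cyclic subgroup, whence Proposition~\ref{modular curves} (no $K$-rational cyclic subgroup of order $32$ or $64$) yields $a-b \le 4$. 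Together with $b\le 3$ this confines $T_2$ to a finite list with $a\le 7$.

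The pruning is the heart of the matter, and it is carried out uniformly by exhibiting, inside a putative $E(F)_{\mathrm{tors}}$, a $K$-rational subgroup of one of the types forbidden by Proposition~\ref{can not happen}, Proposition~\ref{20}, or Theorem~\ref{modular curves-Burton}. The key device is that $2^{\,b-1}T_2 \simeq \Z/2^{\,a-b+1}\Z \oplus \Z/2\Z$ is $K$-rational, together with its torsion subgroups $(2^{\,b-1}T_2)[4]$ and $(2^{\,b-1}T_2)[8]$. Thus if $a-b=4$ then $2^{\,b-1}T_2 \simeq \Z/32\Z\oplus\Z/2\Z$ is $K$-rational, which over $\Q(i)$ is excluded by Proposition~\ref{can not happen}; this kills every $a-b=4$ case over $\Q(i)$ and forces $a-b\le 3$ there. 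The same device rules out unbalanced $2$-parts in the presence of odd torsion: if $a-b\ge 1$ then $(2^{\,b-1}T_2)[4]\simeq \Z/4\Z\oplus\Z/2\Z$ is $K$-rational, so joining it to $E[3]\simeq \Z/3\Z\oplus\Z/3\Z$ (respectively to a $K$-rational $\Z/5\Z$) produces the forbidden type $\Z/4\Z\oplus\Z/2\Z\oplus\Z/3\Z\oplus\Z/3\Z$ (respectively $\Z/2\Z\oplus\Z/4\Z\oplus\Z/5\Z$); and if $a-b\ge 2$ then $(2^{\,b-1}T_2)[8]\simeq \Z/8\Z\oplus\Z/2\Z$ combines with a $\Z/3\Z$ to give the forbidden $\Z/2\Z\oplus\Z/8\Z\oplus\Z/3\Z$. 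This is precisely why $\Z/3\Z\oplus\Z/3\Z$ and $\Z/5\Z$ accompany only balanced $2$-parts, and $\Z/3\Z$ only tolerates $a-b\le 1$. The residual mixtures (e.g. $\Z/5\Z$ together with $\Z/3\Z$, and the bare cyclic cases) are removed by the cyclic-$20$ obstruction of Proposition~\ref{20} and the cyclic-$(24,35,45)$ obstruction of Theorem~\ref{modular curves-Burton} (augmented over $\Q(\sqrt{-3})$ by the cyclic-$20,21,63$ cases of Remark~\ref{explain Burton}), extracted as $K$-rational cyclic subgroups in the same manner.

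Over $\Q(\sqrt{-3})$ the argument diverges exactly at the type $\Z/32\Z\oplus\Z/2\Z$, which is \emph{not} forbidden there, because Proposition~\ref{j} and the integrality of $j$ it supplies are available only over $\Q(i)$. Consequently the two boundary groups $\Z/2\Z\oplus\Z/32\Z$ and $\Z/4\Z\oplus\Z/64\Z$ (the cases $(b,a-b)=(1,4)$ and $(2,4)$) survive, producing the extra entries. I expect the genuine obstacle to be the one remaining boundary group $\Z/128\Z\oplus\Z/8\Z$ (the case $b=3$, $a-b=4$) over $\Q(\sqrt{-3})$: none of its characteristic subgroups is of a type forbidden over that field, and since its $j$-invariant need not be integral, the plain good/additive-reduction count of Proposition~\ref{can not happen} does not immediately apply (the escape through multiplicative reduction that lets $\Z/2\Z\oplus\Z/32\Z$ survive is the danger to rule out). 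The resolution I would pursue is to leverage the extra structure, producing from the full $8$-torsion together with the point of order $128$ a cyclic $32$- or $64$-isogeny defined over a small extension and contradicting Proposition~\ref{modular curves}, or else to show that this richer torsion obstructs multiplicative reduction at a prime of $\Q(\sqrt{-3})$ of small norm and then run the reduction bound as in Proposition~\ref{can not happen}. Once $\Z/128\Z\oplus\Z/8\Z$ is eliminated, collecting the surviving products $T_2\oplus T_{2'}$ yields precisely the two lists in the statement.
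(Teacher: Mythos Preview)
Your outline is the paper's own strategy --- both defer to the Laska--Lorenz template and plug in the $K$-specific obstructions --- but you are missing one input the paper explicitly invokes, and that omission is precisely where your pruning leaks. The missing piece is Najman's classification of $E(K)_{\mathrm{tors}}$ (\cite[Theorem~2]{Najman-cyclotomic}). Your elimination of the mixed types relies solely on extracting $K$-rational cyclic subgroups of orders $20,24,32,35,36,45,64$, but none of these touches $T_{2'}\in\{\Z/7\Z,\Z/9\Z\}$ together with $T_2\neq\{1\}$: for example $E[2]\oplus\Z/7\Z$ yields only $K$-rational cyclic subgroups of order dividing $14$, and you have no cyclic-$14$ obstruction. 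The actual argument is that if $T_2\neq\{1\}$ then $E(K)$ --- hence every twist $E^{(d)}$ --- already has a $K$-rational $2$-torsion point, so the twist carrying the $K$-point of order $7$ (resp.\ $9$) supplied by Lemma~\ref{Laska-Lorenz} would have a $K$-point of order $14$ (resp.\ $18$), contradicting Najman's list. The case $T_{2'}=\Z/15\Z$ with $T_2\neq\{1\}$ is subtler still and your sketch does not address it.

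On $\Z/128\Z\oplus\Z/8\Z$ over $\Q(\sqrt{-3})$: you are right that this boundary case is not reached by any characteristic-subgroup extraction against the forbidden types of Proposition~\ref{can not happen}, and the paper's one-line proof does not make its removal explicit either. In fact the paper only eliminates this group through the later case split: Proposition~\ref{cyclic} gives $E[8]\not\subset E(F)$ whenever $E(K)$ is cyclic, and when $E[2]\subset E(K)$ the explicit computations of \S\ref{full torsion} never produce a $2$-part beyond $\Z/8\Z\oplus\Z/8\Z$ or $\Z/4\Z\oplus\Z/64\Z$. So your instinct that extra work is needed here is correct, but your proposed fixes (manufacturing a $64$-isogeny over a small extension, or a reduction count \`a la Proposition~\ref{can not happen}) are not the route the paper takes.
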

       \begin{proof} 
            The proof follows from \cite[Proof of Theorem 2.5]{Laska-Lorenz} using Proposition \ref{can not 
            happen}, Proposition \ref{modular curves}, Lemma \ref{20}, Proposition \ref{odd} and Theorem 2 in \cite{Najman-cyclotomic}.
       \end{proof}
\section{$E(K)_{\text{tors}}$ is Non-Cyclic}\label{full torsion}
In this section, we study $E(F)_{\text{tors}}$ when the torsion subgroup of $E$ over $K$ is not cyclic.  Theorem 2 in \cite{Najman-cyclotomic} shows that if $E(K)_{\text{tors}}$ is not cyclic, then it is isomorphic to one of the following groups.
  \[    
      \Z/2\Z  \oplus \Z/2n\Z \hspace{2mm} \text{for}\hspace{2mm}  n=1,2,3,4 \hspace{2mm} \text{and} \hspace{2mm} (\text{only if} \hspace{2mm} K=\Q(i))\hspace{2mm}  \Z/4\Z \oplus \Z/4\Z .
  \] 
We study each of these groups separately. The Proposition \ref{2,8}, Proposition \ref{2,6}, Proposition \ref{4,4}, Proposition \ref{2,4} and Proposition \ref{2,2} are the main results of this section.     
 
 Let $E$ be an elliptic curve over a number field $k$ with $E[2]\subset E(k)$. We know that the    
    points of order $2$ on $E$ are given by the roots of the polynomial $f(x)$ where $E$ is defined by 
    $y^2=f(x)$. 
    Therefore, we may assume that $E$ is of the form 
              \[  y^2=(x-\alpha)(x-\beta)(x-\gamma) \]
       with $\alpha, \gamma, \beta$ in $k$. We will use the following results very often in this section. 
     Remember that $F$ denotes the maximal elementary abelian extension of $K$.
         
          \begin{lemma}[{\cite[Theorem 4.2]{Knapp}}]
               \label{Knapp}
               Let $k$ be a field of characteristic not equal to $2$ or $3$ and let $E$ an elliptic curve over $k$   
                   given by 
                         \[ 
                         y^2=(x-\alpha)(x-\beta)(x-\gamma)  
                          \]
                     with $\alpha,\beta,\gamma$ in $k$. 
                    For 
                   $P=(x,y)$ in $E(k)$, there exists a $k$-rational point $Q$ on $E$ such that $[2]Q=P$ if 
                   and only if $x-\alpha, x-\beta$ and $x-\gamma$ are all squares in $k$. 
                
                 In this case if we fix the sign of the square roots of $x-\alpha, x-\beta,x-\gamma$, 
            then the $x$-coordinate of $Q$ equals to either
                \[    
                \sqrt{x-\alpha} \sqrt{x-\beta} \pm \sqrt{x-\alpha}\sqrt{x-\gamma} \pm \sqrt{x-\beta}\sqrt{x- 
                      \gamma}+x
               \] 
         or
               \[    
              - \sqrt{x-\alpha} \sqrt{x-\beta} \pm \sqrt{x-\alpha}\sqrt{x-\gamma} \mp \sqrt{x-\beta}\sqrt{x- 
                      \gamma}+x.
               \]
     See also the proof of Theorem 4.2 in \cite{Knapp}.
      \end{lemma}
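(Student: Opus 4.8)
The plan is to reformulate halving geometrically and then read every claim off a single polynomial identity. Write $f(u)=(u-\alpha)(u-\beta)(u-\gamma)$ and let $Q=(X,Y)$ be a hypothetical $k$-rational point with $[2]Q=P=(x,y)$. The relation $Q+Q+(-P)=O$ says exactly that $Q,Q,-P$ are collinear, i.e.\ the tangent line to $E$ at $Q$, namely $v=\lambda(u-X)+Y$ with $\lambda=f'(X)/(2Y)$, passes through $-P=(x,-y)$. Intersecting this line with $E$ produces a monic cubic in $u$ whose double root is $X$ (tangency) and whose remaining root is $x$ (the third intersection point is $-P$), so that
\[
f(u)-\bigl(\lambda(u-X)+Y\bigr)^2=(u-X)^2(u-x).
\]
This identity drives the entire argument.

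For the ``only if'' direction I would evaluate the identity at $u=\alpha,\beta,\gamma$. Since $f(\alpha)=0$, setting $u=\alpha$ gives $-\bigl(\lambda(\alpha-X)+Y\bigr)^2=(\alpha-X)^2(\alpha-x)$, that is
\[
x-\alpha=\left(\frac{\lambda(\alpha-X)+Y}{X-\alpha}\right)^2,
\]
a square in $k$ as soon as $X\neq\alpha$; and $X=\alpha$ would force $Y=0$, making $Q$ a $2$-torsion point with $[2]Q=O\neq P$, which is excluded. The same evaluation at $u=\beta$ and $u=\gamma$ shows $x-\beta$ and $x-\gamma$ are squares in $k$. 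Hence divisibility of $P$ by $2$ in $E(k)$ forces all three of $x-\alpha,x-\beta,x-\gamma$ to be squares.

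For the converse together with the explicit formula I would run this backwards. Fix square roots $r,s,t\in k$ with $r^2=x-\alpha$, $s^2=x-\beta$, $t^2=x-\gamma$, and set $X=x+rs+rt+st$. A short computation then gives the clean factorizations $X-\alpha=(r+s)(r+t)$, $X-\beta=(r+s)(s+t)$, $X-\gamma=(r+t)(s+t)$, whence $f(X)=\bigl[(r+s)(r+t)(s+t)\bigr]^2$, so $Y:=(r+s)(r+t)(s+t)\in k$ places $Q=(X,Y)$ on $E$, and a direct check of the tangent-line condition confirms $[2]Q=P$. Replacing $(r,s,t)$ by the sign-flipped triples (up to a global sign there are four choices) yields the other three halving points $Q,Q+T_1,Q+T_2,Q+T_3$, with $T_i$ the nonzero $2$-torsion points, and reproduces the two displayed families $rs\pm rt\pm st+x$ and $-rs\pm rt\mp st+x$. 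The genuinely delicate part is precisely this last bookkeeping: checking that exactly four of the eight sign patterns are consistent with a fixed branch of the square roots, that the four resulting $x$-coordinates are the four preimages under $[2]$ (their sum is $4x$, matching the coefficient of $X^3$ in the halving quartic $f'(X)^2=4f(X)(x+a_2+2X)$), and that they agree with the exact signs in the statement; everything else is the routine verification that the tangent-line identity forces. Since this is a classical computation, if pressed for space I would record the outcome and cite the proof of Theorem~4.2 in \cite{Knapp}.
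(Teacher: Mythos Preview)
Your argument is correct and is essentially the classical proof. The paper itself gives no proof of this lemma at all: it simply records the statement and refers the reader to Theorem~4.2 of \cite{Knapp}, so there is nothing substantive to compare your approach against beyond the cited source, whose method your tangent-line identity $f(u)-\bigl(\lambda(u-X)+Y\bigr)^2=(u-X)^2(u-x)$ faithfully reproduces.

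One small point worth tightening: in the converse direction you assert that ``a direct check of the tangent-line condition confirms $[2]Q=P$'' but do not carry it out. This is indeed routine, but since the whole proof hinges on it, you might at least note that the third intersection of the tangent at $Q=(X,Y)$ with $E$ has $x$-coordinate $\lambda^2-(\alpha+\beta+\gamma)-2X$, and that computing $\lambda=f'(X)/(2Y)$ from your factorizations $X-\alpha=(r+s)(r+t)$, etc., collapses this to $x$ after a symmetric-function calculation; alternatively, observe that $y^2=r^2s^2t^2$ pins the sign so that one lands on $P$ rather than $-P$. Your remark that the four sign choices of $(r,s,t)$ modulo a global sign give the four preimages, and that their $x$-coordinates sum to $4x$, is the right bookkeeping to close the argument.
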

    \begin{theorem} [{\cite[Theorem 9]{Burton-even}}] \label{Ono}
         Let $K$ be a number field and let $E/K$ be an elliptic curve with full $2$-torsion. Then $E$ has a 
         model of the form $y^2=x(x+\alpha)(x+\beta)$ where $\alpha, \beta \in \mathcal{O}_K$.
            \begin{enumerate}
               \item $E(K)$ has a point of order $4$ if and only if $\alpha,\beta$ are both squares, $-\alpha, \beta-\alpha$ are both squares, or $-\beta, \alpha-\beta$ are 
                     both squares in $K$.
               \item $E(K)$ has a point of order $8$ if and only if there  exist a $d \in \mathcal{O}_K$,  
                     $d \neq0$ and a Phytogorean triple $(u,v,w)$ such that 
                      \[    \alpha=d^2u^4, \hspace{1mm} \beta=d^2v^4, \]    or we  
                      can replace $\alpha,\beta$ by $-\alpha,\beta-\alpha$ or $-\beta,\alpha-\beta$ as in the 
                      first case.
          \end{enumerate}
    \end{theorem}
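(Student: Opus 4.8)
The plan is to deduce both parts directly from Lemma~\ref{Knapp}, the explicit criterion for halving a point on a curve with full $2$-torsion. First I would fix the model: since $E$ has full $2$-torsion, $y^2=(x-e_1)(x-e_2)(x-e_3)$ with $e_i\in K$, and translating by $x\mapsto x-e_1$ moves a root to the origin, giving $y^2=x(x+\alpha)(x+\beta)$ with $\alpha=e_1-e_2$, $\beta=e_1-e_3\in K$; an admissible change of variables $x\mapsto\lambda^2 x,\ y\mapsto\lambda^3 y$ then clears denominators so that $\alpha,\beta\in\mathcal{O}_K$. The three nonzero $2$-torsion points are $(0,0)$, $(-\alpha,0)$, $(-\beta,0)$, and the roots of the cubic are $0,-\alpha,-\beta$.

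For part (1), a point of order $4$ is precisely a $K$-rational halving of a nonzero $2$-torsion point $P$, so I apply Lemma~\ref{Knapp} to each of the three choices of $P$. For $P=(0,0)$ the three differences $x-0,\ x+\alpha,\ x+\beta$ evaluate to $0,\alpha,\beta$, so $(0,0)$ is halvable over $K$ iff $\alpha,\beta$ are squares; for $P=(-\alpha,0)$ they are $-\alpha,0,\beta-\alpha$; and for $P=(-\beta,0)$ they are $-\beta,\alpha-\beta,0$. These are exactly the three stated conditions, so part (1) is immediate.

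For part (2) the idea is to apply Lemma~\ref{Knapp} twice and translate the resulting square conditions into a Pythagorean triple. Given a point $R$ of order $8$, the point $[4]R$ is a nonzero $2$-torsion point; since translating the $2$-torsion point $(-\alpha,0)$ to the origin carries the pair $(\alpha,\beta)$ to $(-\alpha,\beta-\alpha)$ (and similarly for $(-\beta,0)$), I may relabel so that $[4]R=(0,0)$. Then $Q:=[2]R$ is a $K$-rational order-$4$ point above $(0,0)$, which by part (1) forces $\alpha=a^2$, $\beta=b^2$, and a short computation gives $x(Q)=\pm ab$ with $y(Q)\in K$. Because replacing $b$ by $-b$ swaps the two signs of $x(Q)$ while fixing $\beta=b^2$, I may normalize to $x(Q)=ab$. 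Applying Lemma~\ref{Knapp} now to $P=Q$, the order-$8$ point $R$ exists over $K$ iff $ab$, $ab+\alpha=a(a+b)$ and $ab+\beta=b(a+b)$ are all squares; the last is redundant, since $ab\cdot a(a+b)=a^2\,b(a+b)$.

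Finally I would show that ``$ab$ and $a(a+b)$ are squares'' is equivalent to $\alpha=d^2u^4$, $\beta=d^2v^4$ with $u^2+v^2=w^2$. Writing $a=du^2$, $b=dv^2$ with common squarefree part $d$ (possible since $ab$ is a square), one gets $\alpha=d^2u^4$, $\beta=d^2v^4$ and $a(a+b)=d^2u^2(u^2+v^2)$, a square iff $u^2+v^2=w^2$; conversely $a=du^2$, $b=dv^2$ with a Pythagorean triple makes $ab=(duv)^2$ and $a(a+b)=(duw)^2$ squares, so $Q$ is halvable and $R$ exists. The three choices for $[4]R$ yield the three alternatives of the statement. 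I expect the main obstacle to be precisely this last passage between multiplicative square conditions and an \emph{integral} Pythagorean triple: the ``common squarefree part'' step uses unique factorization and must absorb units, and denominators arising when one only controls $K$-points must be cleared by rescaling $(u,v,w)$ and readjusting $d$ so that $d\in\mathcal{O}_K$. For $\mathcal{O}_K=\Z[i]$ or $\Z[\zeta_3]$ --- UFDs with explicit finite unit groups --- this is routine.
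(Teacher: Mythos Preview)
The paper does not supply its own proof of this statement: Theorem~\ref{Ono} is quoted verbatim from \cite[Theorem~9]{Burton-even} and used as a black box, so there is nothing in the paper to compare your argument against. That said, your approach is the natural one and is correct. Deducing part~(1) by applying Lemma~\ref{Knapp} to each of the three nonzero $2$-torsion points is exactly how such criteria are usually derived, and your computation of the three square conditions is right. For part~(2), the two-step halving via Lemma~\ref{Knapp}, the reduction to $x(Q)=ab$ by the sign choice on $b$, the observation that the three square conditions $ab$, $a(a{+}b)$, $b(a{+}b)$ collapse to two, and the translation into a Pythagorean-triple parametrization are all sound.

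The one point you flag yourself is the only genuine subtlety: extracting a common ``squarefree part'' $d$ so that $a=du^2$, $b=dv^2$ with $d\in\mathcal{O}_K$ presupposes unique factorization (or at least some control on the class group), and the theorem as stated is for an arbitrary number field. For the two fields the paper actually uses, $\Q(i)$ and $\Q(\sqrt{-3})$, both rings of integers are PIDs with finite unit groups, so your argument goes through without difficulty; for general $K$ one would need to argue more carefully (e.g.\ work in $K^\times/(K^\times)^2$ and then clear denominators), but that lies outside what the present paper requires.
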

    
    By Theorem \ref{Ono}, we may assume that an elliptic curve $E$ with full $2$-torsion has the model 
    $y^2=x(x+a)(x+b)$ with $a,b \in \mathcal{O}_K$. We denote this curve by $E(a,b)$. Then $E(a,b)$ 
    is isomorphic (over $K$) to $E(-a,b-a)$ and $E(-b,a-b)$  by the isomorphisms 
              \[ 
              (x,y) \mapsto (x+a,y) \hspace{1mm} \text{and} \hspace{1mm} (x,y) \mapsto (x+b,y)
               \]
    respectively. 
    Assume $E$ has a point of order $4$ in $E(F)$. Then using Theorem \ref{Ono} together with the  
    isomorphisms between $E(a,b), E(-a,b-a)$, and $E(-b,a-b)$, we may assume that there is a point 
    $Q$ such that $[2]Q=(0,0)$ and that $a$ and $b$ are both squares. 
    Notice that with a similar discussion, we may assume that if $E(K)$ has a point of order $8$, then  
    $M=u^4$ and $N=v^4$ for some $u,v \in \mathcal{O}_K$ such that $u^2+v^2$ is a square in $K
    $ (replacing $E$ by a quadratic twist if necessary). 
    
    The following result is on the classification of twists of elliptic curves 
    over $K$. We will use this result very often in this section.
        \begin{theorem}[{\cite[Theorem 15]{Burton-even}}]
               \label{Burt}
                  Let $K=\Q(\sqrt{D})$ with $D=-1,-3$, $d\in K$ a non-square and let $E/K$ an elliptic curve   
                     with full $2$-torsion. Then,
             \begin{enumerate}
                \item If $E(K)_{tor}\simeq \two \oplus \eight$, then            $E^d(K)_{tor} \simeq \two \oplus \two
                $.
                 \item If $E(K)_{tor}\simeq \two \oplus \six$, then              $E^d(K)_{tor} \simeq \two \oplus \two
                 $.
                 \item If $E(K)_{tor}\simeq \four \oplus \four$, then             $K=\Q(i)$ and $E^d(K)_{tor} \simeq 
                             \two \oplus \two$.
                 \item If $E(K)_{tor}\simeq \two \oplus \four$, then              $E^d(K)_{tor} \simeq \two \oplus 
                             \two$ unless               $K=\Q(\sqrt{-3})$ and $d=-1$ in which case                 
                              $E^d(K)_{tor} \simeq \two \oplus \four$ or $ \two            \oplus \two$.
                 \item If $E(K)_{tor}\simeq \two \oplus \two$, then $E^d(K)_{tor} \simeq \two \oplus \two$ for 
                         almost all $d$.
           \end{enumerate}
      \end{theorem}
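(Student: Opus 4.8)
The plan is to reduce everything to the structure of the $2$-primary part of the twist, since quadratic twisting interacts transparently with full $2$-torsion. Writing $E$ in the normalized form $E(a,b):y^2=x(x+a)(x+b)$ of Theorem \ref{Ono}, the twist by a non-square $d$ is isomorphic over $K$ to $E(ad,bd)$ (clear denominators in $dy^2=x(x+a)(x+b)$ via $x\mapsto x/d$, $y\mapsto y/d^2$). In particular $E^{(d)}$ again has full $2$-torsion, so $E^{(d)}(K)_{\mathrm{tor}}\supseteq\two\oplus\two$ in every case, and the entire content of the theorem is that for non-square $d$ no further torsion survives — with the single exception recorded in (4). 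I would also exploit the normalization preceding the theorem: if $E(K)$ has a point of order $8$ one may take $a=u^4$, $b=v^4$ with $u^2+v^2$ a square.

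First I would rule out a point of order $4$ on $E^{(d)}$. By Theorem \ref{Ono}(1) applied to $E(ad,bd)$, such a point exists iff one of the three pairs $\{ad,bd\}$, $\{-ad,(b-a)d\}$, $\{-bd,(a-b)d\}$ consists of squares in $K$. The first pair forces $d$ to be a square, which is excluded by hypothesis. The remaining two pairs each require $-d$ to be a square. Over $K=\Q(i)$ this is equivalent to $d$ being a square (since $-1=i^2$), so those possibilities collapse, and case (3) likewise forces $K=\Q(i)$ through the Weil pairing, $i=\zeta_4\in K$. Over $K=\Q(\sqrt{-3})$, where $-1$ is a non-square, the conditions ``$-d$ square'' together with ``$(b-a)d$ (resp. $(a-b)d$) square'' genuinely survive, and here I would substitute the case-specific normalization: in (1) one has $a=u^4,b=v^4$, so ``$-d$ square'' reduces the second pair to requiring $u^4-v^4$ to be a square and the third to $v^4-u^4$ a square. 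By Lemma \ref{diophantine}(\ref{difference}) the equation $x^4-y^4=z^2$ has only trivial solutions over $K$, forcing $uv=0$, contrary to $E(K)\simeq\two\oplus\eight$. The analogous reductions handle (2) and the non-exceptional part of (4); this is where the quartic Diophantine input is indispensable.

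Having pinned the $2$-part of $E^{(d)}(K)$ to exactly $\two\oplus\two$, I would treat the odd part through the eigenspace decomposition of Lemma \ref{Laska-Lorenz} with $m=1$, namely $E(K(\sqrt d))_{2'}\simeq E(K)_{2'}\oplus E^{(d)}(K)_{2'}$. A new rational point of odd order $\ell$ on $E^{(d)}$ would produce a point of order $\ell$ on $E(K(\sqrt d))$; in case (2), combined with the $3$-torsion already present in $E(K)\simeq\two\oplus\six$, this yields full $3$-torsion over $K(\sqrt d)$, hence $\zeta_3\in K(\sqrt d)$ by the Weil pairing, and together with the ambient $\two\oplus\two$ contradicts the known torsion classification over the resulting quartic field. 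In the remaining cases $E(K)$ has no odd torsion, and the same decomposition shows none can be created. Finally, the exceptional line in (4), $K=\Q(\sqrt{-3})$ and $d=-1$, is exactly the configuration in which $-d=1$ is a square, so the second and third square-pair conditions are no longer obstructed; there I would verify directly that the surviving normalizations yield $E^{(-1)}(K)\simeq\two\oplus\four$ or $\two\oplus\two$, matching the statement.

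The main obstacle is the analysis over $\Q(\sqrt{-3})$: because $-1$ is a non-square there, one cannot collapse ``$-d$ a square'' into ``$d$ a square'' as over $\Q(i)$, and the square-pair conditions of Theorem \ref{Ono}(1) must be defeated one at a time. This is precisely the point where the results on $x^4+y^2=1$ and $x^4-y^4=z^2$ (Lemma \ref{diophantine}) do the real work, and also the point from which the genuine exception in (4) — together with the fact that $\four\oplus\four$ occurs only over $\Q(i)$ in (3) — emerges. Keeping careful track of which field one is in, and of the interaction between the quadratic character of $-1$ and the Pythagorean/quartic normalization of $(a,b)$, is the delicate bookkeeping underlying the whole argument.
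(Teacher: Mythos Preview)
This theorem is not proved in the present paper: it is quoted verbatim as \cite[Theorem 15]{Burton-even} and used as a black box throughout \S\ref{full torsion}. So there is no ``paper's own proof'' to compare against; what follows is an assessment of your sketch on its own merits.

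The $2$-primary analysis for (1), (3), and the generic part of (4) is essentially correct: passing to $E(ad,bd)$ and running the square-pair criterion of Theorem~\ref{Ono}(1), the first pair forces $d$ square, and over $\Q(i)$ the other two collapse because $-1$ is a square; over $\Q(\sqrt{-3})$ the fourth-power normalization in case (1) genuinely feeds Lemma~\ref{diophantine}(\ref{difference}). Two points are incomplete, however. For (2) you invoke ``analogous reductions'', but there is no quartic normalization of $(a,b)$ when the $2$-part of $E(K)$ is only $\two\oplus\two$, so the $x^4-y^4=z^2$ obstruction is unavailable and you have not shown $E^{(d)}$ cannot acquire a point of order $4$. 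In the exceptional branch of (4) you still owe the verification that $E^{(-1)}(K)$ cannot jump to $\two\oplus\eight$ or pick up full $4$-torsion; ``verify directly'' hides a real computation.

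More seriously, your odd-torsion argument is wrong as stated. The decomposition $E(K(\sqrt d))_{2'}\simeq E(K)_{2'}\oplus E^{(d)}(K)_{2'}$ does not prevent $E^{(d)}(K)_{2'}$ from being nontrivial when $E(K)_{2'}=0$; it only tells you what $E(K(\sqrt d))_{2'}$ would then look like. So in cases (1), (3), (4) you have not excluded, e.g., a point of order $3$ on $E^{(d)}$. In case (2) your Weil-pairing step over $\Q(i)$ only pins $d$ to $-3$ up to squares and does not by itself finish; over $\Q(\sqrt{-3})$ the step yields nothing since $\zeta_3\in K$ already. Ruling out odd torsion on the twist requires an additional input (isogeny/modular-curve constraints of the kind used elsewhere in the paper, or the detailed case analysis in \cite{Burton-even}), not just Lemma~\ref{Laska-Lorenz}.
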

       For the rest of \S\ref{full torsion}, $E$ will always denote an elliptic curve
       given by the equation          
                 \[ 
                 E(a,b): y^2=x(x+a)(x+b), \hspace{2mm} a,b\in K. 
                 \] 
     Furthermore, we will assume that the greatest common divisor $(a,b)$ (defined up to a unit) is    
     square-free. Otherwise, we may replace it by the quadratic twist $E^{(d^2)}$ of $E$ where $d^2$ 
     divides both $a$ and $b$.    
     Also, $[n]$ denotes  the multiplication by $n$ on the elliptic curve $E$. 
     
     We will need the following lemma for the proof of Proposition \ref{2,8}.
     
      \begin{lemma}\label{square}
               Let $\alpha$ be in $K$. If $\sqrt{\alpha}$ is a square in $F$, then $\alpha$ or $-\alpha$ is a 
               square in $K$. If $K=\Q(i)$, then $\alpha$ must be a square in $K$.
           \end{lemma}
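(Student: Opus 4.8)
The plan is to translate the hypothesis into the statement that $\alpha$ is a fourth power in $F$ and then to exploit the fact that $\Gal(F/K)$ has exponent $2$. Concretely, writing $\sqrt{\alpha}=\beta^2$ for some $\beta\in F$ (and discarding the trivial case $\alpha=0$), we have $\beta^4=\alpha\in K$ and $\gamma:=\beta^2=\sqrt{\alpha}$. Since $K(\beta)$ is an intermediate field of the abelian extension $F/K$, it is Galois over $K$ with $\Gal(K(\beta)/K)$ elementary abelian, so $d:=[K(\beta):K]$ is $1$, $2$ or $4$. I would organize the proof around these three possibilities.

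If $d=1$ then $\beta\in K$ and $\alpha=(\beta^2)^2$ is a square. If $d=2$ and $\gamma\in K$ then $\alpha=\gamma^2$ is a square; if $d=2$ and $\gamma\notin K$ then $K(\gamma)=K(\beta)$ (both are quadratic over $K$), so I can write $\beta=p+q\gamma$ with $p,q\in K$, and comparing $\beta^2=\gamma$ coefficient-by-coefficient in the basis $\{1,\gamma\}$ gives $p^2+q^2\alpha=0$ and $2pq=1$, whence $-\alpha=(p/q)^2$ is a square in $K$.

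The main work is the case $d=4$. Here the minimal polynomial of $\beta$ over $K$ is $x^4-\alpha$, so all four roots $\pm\beta,\pm i\beta$ lie in $K(\beta)$; in particular $i\in K(\beta)$, and $G:=\Gal(K(\beta)/K)\cong(\Z/2\Z)^2$ acts simply transitively on the roots. Labelling the three nontrivial elements $\sigma_1,\sigma_2,\sigma_3$ by $\sigma_1(\beta)=-\beta$, $\sigma_2(\beta)=i\beta$, $\sigma_3(\beta)=-i\beta$, I would use the exponent-$2$ relations $\sigma_j^2=1$ to pin down the action on $i$: from $\sigma_2^2(\beta)=\beta$ one gets $\sigma_2(i)=-i$, similarly $\sigma_3(i)=-i$, and then the group relation $\sigma_1\sigma_2=\sigma_3$ forces $\sigma_1(i)=i$. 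A direct check then shows $i\gamma$ is fixed by all of $G$ (indeed $\sigma_1(i\gamma)=i\gamma$ and $\sigma_2(i\gamma)=\sigma_3(i\gamma)=(-i)(-\gamma)=i\gamma$), hence $i\gamma\in K$, and therefore $-\alpha=-\gamma^2=(i\gamma)^2$ is a square in $K$.

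Finally, the refinement for $K=\Q(i)$ is immediate once the general statement is in hand: in $\Q(i)$ we have $-1=i^2$, so ``$-\alpha$ is a square'' is equivalent to ``$\alpha$ is a square'', collapsing the two alternatives to the single conclusion that $\alpha$ is a square. One can alternatively observe directly that the case $d=4$ cannot occur over $\Q(i)$, since $i\in K$ would make $\sigma_2(i)=i$ incompatible with the forced value $\sigma_2(i)=-i$. The only delicate point is the bookkeeping in the $d=4$ case---making sure the labelling of $\sigma_1,\sigma_2,\sigma_3$ and the group law are used consistently---which is exactly where I would be most careful.
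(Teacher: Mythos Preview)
Your proof is correct. Both you and the paper start from the same setup---$\beta^4=\alpha$ and the polynomial $x^4-\alpha$---and split into cases according to $[K(\beta):K]$ (equivalently, the factorisation type of $x^4-\alpha$ over $K$). The treatment of the low-degree cases is essentially the same, modulo packaging: the paper writes $x^4-\alpha$ as a product of two quadratics and compares coefficients, while you write $\beta=p+q\gamma$ and compare coefficients in the basis $\{1,\gamma\}$. The genuine difference is in the degree-$4$ case. The paper argues indirectly: the Galois group must be the Klein four-group inside $S_4$, hence $x^4-\alpha$ cannot stay irreducible over $K(i)$ (else one would see an order-$4$ automorphism $\beta\mapsto i\beta$), so for $K=\Q(\sqrt{-3})$ one passes to $K(i)$ and reapplies the reducible analysis there. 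You instead compute the $V_4$-action on $\{\pm\beta,\pm i\beta\}$ directly, use the exponent-$2$ relations to determine how each $\sigma_j$ moves $i$, and read off that $i\gamma$ is a Galois invariant, giving $-\alpha=(i\gamma)^2\in K^2$ in one stroke. Your route is a bit more self-contained (no second pass over $K(i)$), while the paper's route makes the special role of $\Q(i)$ appear earlier in the argument. For the final refinement when $K=\Q(i)$, your first observation (that $-1$ is already a square, so the two alternatives coincide) is the clean way to conclude; your ``alternatively'' remark that $d=4$ cannot occur over $\Q(i)$ is a nice structural observation but does not by itself give the sharper conclusion, since the $d=2$ sub-case with $\gamma\notin K$ also lands on $-\alpha$ being a square.
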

            
           \begin{proof}
               Let $w\in F$ be such that $w^2=\sqrt{\alpha}$. Then $w$ is a root of the polynomial 
               $f(x)=x^4-\alpha$ defined over $K$. Since $f$ has one root in $F$ and
               $F$ is a Galois extension of $K$,  $f$ splits in $F$. If $f$ is reducible over $K$, then it has to 
               be product of 
               two quadratic polynomials, otherwise $3$ has to divide the order of $Gal(F/K)$. Let us write 
               $f$ as a product of two polynomials.                 
                            \[
                            x^4-\alpha=(x^2+ex+f)(x^2+cx+d)
                             \] 
                 for $e,f,c,d \in K$.  
               Then $ec+f+d=0$, $ed+fc=0$ and $c+e=0$. Therefore, replacing $e$ by $-c$, we obtain 
               $c(f-d)=0$. So, either $c=0$ or $f=d$. If $c=0$, then  $f=-d$ and so $fd=-\alpha$ implies that 
               $\alpha=f^2$ in $K$. If $f=d$, then $\alpha=-fd$, so $-\alpha$ is a square in $K$.
               
               Now assume that $f$ is irreducible over $K$. The Galois group of $f$ over $K$ is an 
               elementary abelian $2$-group since it is a quotient of $\Gal(F/K)$. However, an
               elementary abelian 2-subgroup of $S_4$ is either of order $2$ or the Klein four-group 
               $V_2$. Hence 
               it must be isomorphic to $V_2$ and in particular, it has order $4$. 
               If $f$ remains irreducible over $K(i)$, then we obtain an automorphism of 
               order $4$ in $\Gal(K(w)/K(i))$; namely $ w\mapsto iw $ which contradicts the fact that Galois 
               group is $V_2$. Therefore, if $K=\Q(i)$, it can not be irreducible and we may assume that 
               $K=\Q(\sqrt{-3})$. 
               Then $f$ has to be reducible over $K(i)$ and by our previous discussion, we see that $
               \alpha$ or $-\alpha$ is a square in $K(i)$. If $\alpha=\pm d^2$ with $d\in K(i)$, then 
               $d=bi$ for some $b\in K$ and $\alpha=\pm b^2$.
          
          \end{proof}
    
    \begin{prop}\label{2,8} 
    Let $E: y^2=x(x+a)(x+b)$ be an elliptic curve defined over $K$. Assume that 
                               $E(K)_{\text{tors}}\simeq        \two \oplus \eight$. 
               \begin{enumerate}
                    \item If $K=\Q(i)$, then $E(F)_{\text{tors}}$ is either isomorphic to $\four \oplus \sixteen$  
                            or  $\four \oplus \thirtytwo$.
                    \item If $K=\Q(\sqrt{-3})$, then $E(F)$ is isomorphic to one of the groups above or $\four 
                           \oplus \Z/64\Z$.
              \end{enumerate}
     \end{prop}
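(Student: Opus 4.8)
The plan is to pin down $E(F)_{\text{tors}}$ in three stages: first reduce to a $2$-group of the shape given by Theorem~\ref{Laska}, then produce enough torsion by halving $K$-rational points over $F$, and finally bound the torsion from above using the Diophantine input of \S\ref{Mordell}. Since $E(F)_{\text{tors}}$ is unchanged under quadratic twisting (every element of $K^\times$ is a square in $F$, because $\lambda=\mu\nu/\nu^2$ with $\mu,\nu\in\mathcal{O}_K$ shows $\sqrt\lambda\in F$), I would replace $E$ by the twist of the form $E(a,b)$ with $a=u^4$, $b=v^4$ and $u^2+v^2=w^2$ a Pythagorean triple, as permitted by Theorem~\ref{Ono}. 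For the odd part, Theorem~\ref{Burt}(1) gives $E^{(d)}(K)_{\text{tors}}\simeq\two\oplus\two$ for every non-square $d$, so Lemma~\ref{Laska-Lorenz} forces $E(F)_{2'}=0$; thus $E(F)_{\text{tors}}$ is a $2$-group and, by Theorem~\ref{Laska}, of the form $\Z/2^{b+r}\Z\oplus\Z/2^b\Z$.

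Next I would generate torsion by repeatedly halving. Since every element of $K^\times$ is a square in $F$, Lemma~\ref{Knapp} shows that any point of $E(K)$ is divisible by $2$ in $E(F)$: for $P=(x_0,y_0)\in E(K)$ the quantities $x_0,\,x_0+a,\,x_0+b$ all lie in $K$, hence are squares in $F$. Applying this to the three $2$-torsion points gives $E[4]\subseteq E(F)$, so $b\ge 2$; applying it to the order-$8$ point $R\in E(K)$ produces a point of order $16$ in $E(F)$, so the cyclic factor has order at least $16$. Hence $E(F)_{\text{tors}}\supseteq\four\oplus\sixteen$.

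The heart of the argument is to show $b=2$, i.e.\ $E[8]\not\subseteq E(F)$; it suffices to exhibit one $2$-torsion point that is not divisible by $4$ in $E(F)$, and the right choice is $(-a,0)$ (not $(0,0)$, which the Pythagorean relation makes divisible by $4$). Halving $(-a,0)$ with Lemma~\ref{Knapp} yields a point $Q_0$ with $x(Q_0)+a=\pm\sqrt{a(a-b)}=\pm u^2\sqrt{u^4-v^4}$. A further halving of $Q_0$ requires $x(Q_0)+a$ to be a square in $F$, hence (the factor $u^2$ being a square) requires $\sqrt{u^4-v^4}$ to be a square in $F$. By Lemma~\ref{square} this forces $u^4-v^4$ or $v^4-u^4$ to be a square in $K$, and then Lemma~\ref{diophantine}(\ref{difference}) (only trivial solutions of $x^4-y^4=z^2$) forces $u^4=v^4$, i.e.\ $a=b$, contradicting the non-singularity of $E$. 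Therefore $(-a,0)$ is not divisible by $4$, so $E[8]\not\subseteq E(F)$ and $b=2$.

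Finally, combining $E(F)_{\text{tors}}\supseteq\four\oplus\sixteen$ with $b=2$ gives $E(F)_{\text{tors}}\cong\four\oplus\Z/2^m\Z$ with $m\ge4$, and Theorem~\ref{Laska} caps $m$: for $K=\Q(i)$ one has $m\le5$, giving $\four\oplus\sixteen$ or $\four\oplus\thirtytwo$, while for $K=\Q(\sqrt{-3})$ the extra entry allows $m=6$ as well, giving also $\four\oplus\Z/64\Z$. The main obstacle is the third step: one must both identify $(-a,0)$ as the correct obstruction (exploiting the asymmetry created by the Pythagorean relation) and control the nested radical $\sqrt{\sqrt{u^4-v^4}}$, which is precisely where Lemma~\ref{square} and the Fermat-quartic computation of \S\ref{Mordell} enter.
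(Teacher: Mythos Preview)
Your argument is correct and matches the paper's proof essentially step for step: the same normalization $a=u^4$, $b=v^4$ via Theorem~\ref{Ono}, the same use of Lemma~\ref{Knapp} to get $\four\oplus\sixteen\subset E(F)$, the same obstruction at $(-a,0)$ leading via Lemma~\ref{square} to a nontrivial solution of $x^4-y^4=z^2$ ruled out by Lemma~\ref{diophantine}, and the same appeal to Theorem~\ref{Burt} and Theorem~\ref{Laska} to finish. One small wording point: Lemma~\ref{diophantine} gives $uvt=0$, not directly $u^4=v^4$; you should note that $u=0$ or $v=0$ also contradicts non-singularity before concluding $t=0$ and hence $a=b$.
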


    \begin{proof}
        Since any number in $K$ is a square in $F$, $\Z/4\Z \oplus \Z/16\Z \subset E(F)$ by Lemma  
         \ref{Knapp}. By Theorem \ref{Ono}, we may assume that $a=u^4$ and $b=v^4$ for some $u,v \in 
         \mathcal{O}_K$ such that $u^2+v^2=w^2$ for some $w \in K$. We will show that $\eight \oplus 
         \eight \not\subset E(F)$. Let $Q_2=(x,y)$ be a point of order $4$ such that $[2]Q_2=(-a,0)$. By 
         Lemma \ref{Knapp}, we compute that $x$ equals to one of the followings:       
                      \[ 
                      \pm \sqrt{-u^4+0}\sqrt{-u^4+v^4}-u^4.
                      \]    
        If $\eight \oplus \eight \subset E(F)$, then there is a point $Q_3$ in $E(F)$ such that 
        $[2]Q_3=Q_2$ and by Lemma \ref{Knapp}, $x+u^4$ is a square in $F$, i.e., 
                  \[  
                   x+u^4=\pm \sqrt{-u^4}\sqrt{(-u^4+v^4)}=\pm u^2\sqrt{u^4-v^4} 
                   \]
        is a square in $F$. Since $u^2$ and $-u^2=(iu)^2$ are both  squares in $F$,  $\sqrt{u^4-v^4}$ is also
        a square in $F$. By Lemma 
        \ref{square}, we see that $u^4-v^4$ or $v^4-u^4$ has to be a square in $K$. 
        Therefore, there exist a $t \in K$ such that $(u,v,t)$ or $(v,u,t)$ satisfy the equation $x^4-
        y^4=z^2$ and by Lemma \ref{diophantine}(\ref{difference}), $uvt=0$.
       However, $u$ or $v$ can not be zero since $E$ is non-singular, hence $t$ must be zero which 
       means $a$ equals to $b$; also contradicts to $E$ being nonsingular. 
       Hence this shows that $E(F)$ does not contain a subgroup isomorphic to $\eight \oplus 
       \eight $. Theorem \ref{Burt} implies that $E(F)_{2'}={0}$ and the result follows from Theorem 
       \ref{Laska}. 
       
       See \cite[Proposition 4.1]{Fujita2} for a similar result over $\Q$.
  \end{proof}

   \begin{prop}\label{2,6}
      Assume $E(K)_{\text{tors}}\simeq \two \oplus \six$. Then $E(F)_{\text{tors}}$ is isomorphic to $\four 
      \oplus \Z/12\Z$.
   \end{prop}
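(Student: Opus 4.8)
The plan is to compute the $2$-primary and odd parts of $E(F)_{\text{tors}}$ separately and then recombine them. Since $\two\oplus\six\cong\two\oplus\two\oplus\Z/3\Z$, the curve $E$ has full $2$-torsion, so by Theorem~\ref{Ono} I may take the model $E(a,b)\colon y^2=x(x+a)(x+b)$; moreover $E(K)$ carries a rational point of order $3$ but, being $\two\oplus\six$, no point of order $4$. For the lower bound I would note that every element of $K$ is a square in $F$, so Lemma~\ref{Knapp} shows each of $(0,0),(-a,0),(-b,0)$ is divisible by $2$ in $E(F)$; since $E[2]\subset E(F)$, all four preimages of each such point also lie in $E(F)$, whence $E[4]\subset E(F)$ and $\four\oplus\four\oplus\Z/3\Z\cong\four\oplus\Z/12\Z\subset E(F)_{\text{tors}}$.

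For the odd part I would invoke Lemma~\ref{Laska-Lorenz}, by which $E(F)_{2'}$ is a direct sum of groups $E^{(d)}(K)_{2'}$. By Theorem~\ref{Burt}(2), every nontrivial quadratic twist satisfies $E^{(d)}(K)_{\text{tor}}\cong\two\oplus\two$, hence contributes nothing in odd order, while the trivial twist contributes $E(K)_{2'}\cong\Z/3\Z$. Thus $E(F)_{2'}\cong\Z/3\Z$. Combined with the first paragraph and Theorem~\ref{Laska}, any remaining possibility for $E(F)_{\text{tors}}$ strictly larger than $\four\oplus\Z/12\Z$ would contain a point of order $8$, so the whole problem reduces to excluding such a point.

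The key auxiliary step is the claim that none of $ab$, $a(a-b)$, $b(b-a)$ is a square in $K$. To prove it, suppose $ab=c^2$; then the order-$4$ point $R$ lying over $(0,0)$ has $x$-coordinate $\pm\sqrt{ab}=\pm c\in K$, so $R$ is defined over some $K(\sqrt m)$ with $m=c(c+a)(c+b)\in K$. If $m$ is a square then $R\in E(K)$, which is absurd; otherwise, since $x_R=c$ lies in $K$ while the other two preimages of $(0,0)$ have $x$-coordinate $-c\neq c$, the nontrivial automorphism of $K(\sqrt m)/K$ sends $R$ to $-R$, producing a point of order $4$ in $E^{(m)}(K)$ — impossible by Theorem~\ref{Burt}(2). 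The isomorphisms $E(a,b)\cong E(-a,b-a)\cong E(-b,a-b)$ cyclically permute the three products, so the same argument disposes of all three.

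Finally I would rule out an order-$8$ point $P$. After relabelling via the above isomorphisms I may assume $4P=(0,0)$; then $2P$ is an order-$4$ point over $(0,0)$ with $x$-coordinate $\pm\sqrt{ab}$, and Lemma~\ref{Knapp} requires $\sqrt{ab}$, $a+\sqrt{ab}$, $b+\sqrt{ab}$ all to be squares in $F$. That $\sqrt{ab}$ be a square in $F$ forces, by Lemma~\ref{square}, $\pm ab$ to be a square in $K$; since $ab$ is not a square, this is impossible when $K=\Q(i)$, which settles that case. When $K=\Q(\sqrt{-3})$ one instead gets $-ab=g^2$, so $\sqrt{ab}=gi$ with $i\in F$, and the surviving condition ``$a+gi$ is a square in $F$'' must be analyzed through the biquadratic $T^4-2aT^2+a(a-b)$ of which $\sqrt{a+gi}$ is a root: because $\Gal(F/K)$ has exponent $2$, the Galois group of this polynomial is forced to be elementary abelian, which in turn forces its constant term $a(a-b)$ to be a square in $K$, contradicting the previous paragraph. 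I expect this last $\Q(\sqrt{-3})$ step to be the main obstacle, precisely because there $i\in F$ makes $\sqrt{ab}$ automatically a square and one cannot terminate at the first condition as one does over $\Q(i)$; the reducible cases of the biquadratic must also be checked, but they likewise yield $a(a-b)$ a square. Assembling the three parts gives $E(F)_{\text{tors}}\cong\four\oplus\four\oplus\Z/3\Z\cong\four\oplus\Z/12\Z$.
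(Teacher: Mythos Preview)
Your proof is correct and structurally mirrors the paper's: same lower bound via Lemma~\ref{Knapp}, same odd-part computation via Theorem~\ref{Burt}, and the same reduction to excluding a point of order~$8$ by showing the relevant halving conditions fail. The one substantive difference is in how you handle the case $-ab\in K^{*2}$ over $K=\Q(\sqrt{-3})$. The paper writes $a=da'^2$, $b=-db'^2$, computes $x(P_2)+a=da'(a'\pm b'i)$, and argues directly in $K(i)$ that $a'\pm b'i=u^2s$ with $u\in K(i)$, $s\in K$; taking the norm down to $K$ yields $a'^2+b'^2=(u\bar u)^2s^2$, whence $d(a-b)$ and $-db$ are both squares and the twist $E(a-b,-b)^{(d)}$ acquires a $K$-rational point of order~$4$, contradicting Theorem~\ref{Burt}. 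You instead note that $\sqrt{a+gi}$ is a root of the biquadratic $T^4-2aT^2+a(a-b)\in K[T]$ and use that an irreducible biquadratic whose splitting field lies in $F$ must have Galois group $V_4$, hence square constant term (with the reducible factorisations checked separately), forcing $a(a-b)\in K^{*2}$ and contradicting your auxiliary claim. Both routes land on the same obstruction---a forbidden square among $ab$, $a(a-b)$, $b(b-a)$---but the paper's is an explicit norm computation in $K(i)$ while yours is a Galois-group classification; your packaging of the three ``non-square'' facts into a single auxiliary claim up front also makes the symmetry under $E(a,b)\cong E(-a,b-a)\cong E(-b,a-b)$ more visible than in the paper's inline treatment.
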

   \begin{proof}
       By Theorem \ref{Burt}, all (non-trivial) quadratic twists of $E$ have torsion subgroup isomorphic to 
       $\two \oplus \two $, hence, the odd part of $E(F)_{\text{tors}}$ must be isomorphic to $\Z/3\Z$.
       By Lemma \ref{Knapp}, we also have that $\four \oplus \four \subset E(F)$ since  $\two \oplus \two 
       \subset E(K)$. Hence $\four \oplus \Z/12\Z \subset E(F)$.

       To prove the statement, we need to show there is no point of order $8$ in $E(F)$. Let $P_2$ be in 
       $E(F)$ such that $[2]P_2=P_1=(0,0)$. We compute the $x$-coordinate of $P_2$ by Lemma 
       \ref{Knapp} as 
                            \[ 
                            x(P_2)=\pm \sqrt{ab}.
                            \] 
       Then,
       $P_2$ is either $(\sqrt{ab},\pm \sqrt{ab}(\sqrt{a}+ \sqrt{b}))$ or $(-\sqrt{ab},\pm \sqrt{ab}(-\sqrt{a}+ 
       \sqrt{b}))$. Suppose that $P_2$ is in $[2]E(F)$.  Then again by Lemma \ref{Knapp}, $\sqrt{ab}$ 
       has 
       to be a square in $F$ and by Lemma \ref{square}, $ab$ or $-ab$ is a square in $K$. Suppose that 
       $ab$ is a square in $K$. Then $a=da'^2$ and $b=db'^2$ where $(a',b')$ is a unit.
       Then 
                  \[
                  P_2=(\epsilon_1 da'b', \epsilon_2 da'b'(\epsilon_1 a'\sqrt{d}+b'\sqrt{d}))
                  \] 
        where $\epsilon_1^2=
                         \epsilon_2^2=1$
        and the point 
                  \[
                  (\epsilon_1 da'b', \epsilon_2 da'b'(\epsilon_1 a'+b'))
                  \] 
       defines a point of order $4$ in $E^{(d)}(K)$. However by Theorem \ref{Burt}, we know that any 
       quadratic twist of $E$ has torsion subgroup isomorphic to $\two \oplus \two$ over $K$, hence $ab
       $ is not a square. 
       
       Now assume $-ab$ is a square.  Let $a=da'^2$ and $b=-db'^2$. If $P_2$ is in $[2]E(F)$, then 
       $x(P_2)+a$ is also a square in $F$.       
                \[ 
                x(P_2)+a=\pm \sqrt{ab}+a=\pm (da'b')i+da'^2 =da'(a'\pm b'i)     
                \] 
        Hence $a'+b'i=u^2s$ for some $u$ in $K(i)$ and $s \in K$. We also see that $a'-b'i={\bar{u}}^2s$ 
        where $\bar{u}$ denotes the Galois conjugate of $u$. Then             
                 \[   
                  (a-b)/d= a'^2+b'^2=(u\bar{u})^2s^2 
                  \]
        is a square in $K$. 
        Consider the curve
                 \[ 
                 E'=E(a-b,-b) : y^2=x(x+a-b)(x-b). 
                 \]
        Taking the quadratic twist of $E'=E(a-b,-b)$ 
        by $d$, we obtain               
                  \[ 
                  E'^{(d)} :   y^2=x(x+d(a-b))(x-db). 
                  \] 
        Notice that $d(a-b)=d^2(a'^2+b'^2)$ and $-db=d^2b'^2$ are squares in 
        $K$. Hence $E'^{(d)}$ has a point of order $4$ by Lemma \ref{Knapp}. However, this is not 
        possible by Theorem \ref{Burt} since $E$ and $E'$ are isomorphic over $K$.
        Hence $P_2$ is not in $[2]E(F)$.

        Using the isomorphism between $E(-a,b-a), E: y^2=x(x+a)(x+b)$, and $E(a,b)$ we described 
        earlier, one can show that there is no point $P$ of order $8$ in $E(F)$ which proves that $E(F)\simeq \four \oplus \four 
        \oplus \Z/3\Z$.
        
        See \cite[Proposition 4.3]{Fujita2} for a similar result over $\Q$.
    \end{proof}

   \begin{prop}\label{4,4}
         Let $K=\Q(i)$. If $E(K)_{\text{tors}}\simeq \four \oplus \four$, then $E(F)_{\text{tors}}$ is 
         isomorphic 
      to $\eight \oplus \eight$.

   \end{prop}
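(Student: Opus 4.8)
The plan is to dispatch the odd part and the lower bound first, reduce the whole statement to a single non-divisibility claim via Theorem \ref{Laska}, and then settle that claim by a Galois computation in a multiquadratic field. Note first that $\four\oplus\four\subseteq E(K)$ forces $i\in K$ by the Weil pairing, so we are automatically over $\Q(i)$. By Theorem \ref{Burt}(3) every nontrivial quadratic twist has $E^{d}(K)_{\text{tors}}\simeq\two\oplus\two$; feeding this into Lemma \ref{Laska-Lorenz} gives $E(F)_{2'}=0$, so $E(F)_{\text{tors}}$ is a $2$-group. Writing $E=E(a,b)$ as in Theorem \ref{Ono}, the hypothesis $E[4]\subseteq E(K)$ is equivalent to $E[2]\subseteq 2E(K)$, i.e. all three $2$-torsion points are $2$-divisible; by Lemma \ref{Knapp} (and $-1=i^2$) this holds exactly when $a$, $b$, and $a-b$ are all squares in $K$, say $a=m^2$, $b=n^2$. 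Since every element of $K$ is a square in $F$, Lemma \ref{Knapp} lets me halve any point of $E[4]$ inside $E(F)$; halving two independent generators $P_1,P_2$ of $E[4]$ yields order-$8$ points $Q_1,Q_2\in E(F)$ with $E[4]=\langle P_1,P_2\rangle\subseteq\langle Q_1,Q_2\rangle$ and $[2]\langle Q_1,Q_2\rangle=E[4]$, whence $|\langle Q_1,Q_2\rangle|=64$ and $E[8]=\eight\oplus\eight\subseteq E(F)$.

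Combining this lower bound with Theorem \ref{Laska} and the triviality of the odd part, the only surviving possibilities are $E(F)_{\text{tors}}\simeq\Z/2^{3+t}\Z\oplus\eight$ with $t\in\{0,1,2,3\}$, so it suffices to prove that $E(F)$ has no point of order $16$. Suppose $R\in E(F)$ has order $16$. Relabelling via the $K$-isomorphisms $E(a,b)\cong E(-a,b-a)\cong E(-b,a-b)$ I may assume $[8]R=(0,0)$, so that $[4]R=P_4$ is a half of $(0,0)$ with $x(P_4)=mn$ and $Q_8:=[2]R$ is a half of $P_4$. Writing $\mu=\sqrt{m}$, $\nu=\sqrt{n}$, $r=\sqrt{m+n}$, Lemma \ref{Knapp} computes
\[
x(Q_8)=\mu\nu(r+\mu)(r+\nu),\qquad x(Q_8)+m^2=\mu r(\mu+\nu)(r+\nu),\qquad x(Q_8)+n^2=\nu r(\mu+\nu)(r+\mu),
\]
and $R\in E(F)$ forces all three of these to be squares in $F$.

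The heart of the matter is that this cannot happen. Since the product of the three quantities is $y(Q_8)^2$, it is enough to contradict that the first two are simultaneously squares; dividing them shows this would force
\[
\xi:=\nu r(r+\mu)(\mu+\nu)\in K(\mu,\nu,r)
\]
to be a square in $F$. I claim $\xi$ is not a square in $F$, and this is the step I expect to be the main obstacle. In the generic case $[K(\mu,\nu,r):K]=8$, let $\sigma$ be the automorphism with $\sigma(r)=-r$, $\sigma(\mu)=\mu$, $\sigma(\nu)=\nu$. Then $\sigma(\xi)/\xi=(r-\mu)/(r+\mu)=\nu^2/(r+\mu)^2$ is a square in $K(\mu,\nu,r)$, so $K(\mu,\nu,r,\sqrt{\xi})/K$ is Galois; but writing $\sigma(\xi)=\xi\beta^2$ with $\beta=\nu/(r+\mu)$ one finds $\sigma(\beta)\beta=\nu^2/(\mu^2-r^2)=\nu^2/(-\nu^2)=-1$, so every lift of $\sigma$ has order $4$. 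Hence $K(\mu,\nu,r,\sqrt{\xi})/K$ is not elementary abelian and cannot lie in $F$, giving $\xi\notin(F^*)^2$. This is precisely the order-$4$-automorphism obstruction already exploited in Lemma \ref{square}; alternatively one can manipulate the three square conditions, using the identity $r^4-(\sqrt{m-n}\,)^4=(2\mu\nu)^2$ together with Lemma \ref{square}, into a nontrivial $K$-solution of $x^4-y^4=z^2$, contradicting Lemma \ref{diophantine}(\ref{difference}). The remaining bookkeeping—the other sign choices in $P_4$ and $Q_8$, the other two $2$-torsion base points, and the degenerate cases where $K(\mu,\nu,r)$ collapses so that a different $\sigma$ must be chosen—is routine and produces the same contradiction. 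Therefore $E(F)$ has no point of order $16$, and together with the lower bound this yields $E(F)_{\text{tors}}\simeq\eight\oplus\eight$.
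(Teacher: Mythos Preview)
Your argument is correct, and the overall architecture (kill the odd part with Theorem~\ref{Burt} and Lemma~\ref{Laska-Lorenz}, get $E[8]\subset E(F)$ from Lemma~\ref{Knapp}, then exclude order~$16$) matches the paper. The execution of the last step, however, is genuinely different.

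The paper exploits a factorisation you do not use: in your notation one has
\[
x(Q_8)=\mu\nu(r+\mu)(r+\nu)=\tfrac12\,\mu\nu\,(\mu+\nu+r)^2,
\]
so $x(Q_8)$ being a square in $F$ already forces $\sqrt{mn}$ to be a square in $F$, hence $mn\in K^{2}$ by Lemma~\ref{square}. Since $\gcd(m,n)$ is a unit this gives $m=u^{2},\,n=v^{2}$ (or $m=iu^{2},\,n=iv^{2}$), and the relation $m^{2}-n^{2}\in K^{2}$ coming from $E[4]\subset E(K)$ then yields a nontrivial $K$-solution of $x^{4}-y^{4}=z^{2}$, contradicting Lemma~\ref{diophantine}. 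The paper does this for one half of each of $(0,0)$, $(-a,0)$, $(-b,0)$ (getting $\sqrt{mn}$, $\sqrt{-m r_0}$, $\sqrt{inr_0}$ respectively, with $r_0^{2}=m^{2}-n^{2}\in K$), and then uses a short group-theoretic reduction to show that \emph{any} point of order~$16$ forces one of these three specific order-$8$ points into $2E(F)$. Your route---forming $\xi=\nu r(r+\mu)(\mu+\nu)$ and exhibiting an order-$4$ lift of $\sigma\colon r\mapsto -r$---is a valid alternative that bypasses Lemma~\ref{square} and the Diophantine input, at the cost of working in the multiquadratic field $K(\mu,\nu,r)$ rather than in $K$ itself. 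The trade-off is that the paper's three computations plus the reduction lemma cover all order-$8$ points at once, whereas your organisation leaves sixteen sign/translation variants to the ``routine bookkeeping'' clause; they are indeed routine (each $x(Q_8)$ factors as $\tfrac12\mu\nu(\pm\mu\pm\nu\pm r)^{2}$, and $x(P_4)=-mn$ replaces $r$ by $\sqrt{m-n}$), but the paper's packaging is tighter.

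One small point of justification: from ``$\sigma(\xi)/\xi$ is a square'' for the single automorphism $\sigma$ you cannot yet conclude that $K(\mu,\nu,r,\sqrt{\xi})/K$ is Galois. The quickest fix is to note that you are assuming $\sqrt{\xi}\in F$, so $K(\mu,\nu,r,\sqrt{\xi})\subset F$; since $\Gal(F/K)$ is abelian, every intermediate field is automatically Galois over $K$, and then your order-$4$ lift gives the desired contradiction.
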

   
   \begin{proof}
      Suppose that $\four \oplus \four \subset E(K)$. By Theorem \ref{Ono}, we may assume that 
      $a=s^2$ and $b=t^2$ 
      for some $s,t \in K$. Let 
                            \[ 
                            P_1=(0,0)  \hspace{2mm} \text{and} \hspace{2mm} Q_1=(-s^2,0)    
                            \]
       as before and            
              \[  
              x(P_2)=\pm st   \hspace{2mm} \text{and} \hspace{2mm} x(Q_2)=\pm s\sqrt{s^2-t^2}-s^2 
              \]
       such that $[2]P_2=(0,0)$ and  $[2]Q_2=Q_1$.
     
       Since $Q_2$ has order $4$, it must be in 
       $E(K)$ which forces $s^2- t^2$ to be a square in $K$. Let $r$ be in $K$ such that                      
                                       \[ 
                                       s^2-t^2=r^2. 
                                       \]  
     Hence, we compute that $Q_2$ equals to 
     \[ 
     (sr-s^2,\pm isr(r-s)) \hspace{1mm} \text{or} \hspace{1mm} (-sr-s^2,\pm isr(r+s)).
     \] 
     Let $P_2$ denote the point with $x(P_2)=st$ and let $Q_2$ denote the point with $x(Q_2)= (sr-s^2,\pm isr(r-s))$.    
     By lemma 9, we know that $\eight \oplus \eight 
     \subset E(F)$. We want to show that $\sixteen \not\subset E(F)$.
      Using Lemma \ref{Knapp}, we 
     find a point $P_3$ such that $[2]P_3=P_2$ and          
                       \[ 
                       x(P_3)=1/2\sqrt{st}(\sqrt{s}+\sqrt{t}+\sqrt{s+t})^2. 
                       \]
     Assume $P_3$ is in $[2]E(F)$. Then $st$ is a square in $K$ by Lemma \ref{Knapp} and Lemma 
     \ref{square}. Since $s$ and $t$ are relatively prime, either $s$ and $t$ are both squares or they are 
     both $i$ times a square. Now, let $u,v$ be in $K$ such that $s=iu^2,t=iv^2$.
     Then the equation $s^2-t^2=r^2$ gives us $-u^4+v^4=r^2$ which has no non-trivial solutions over 
     $K$ by Lemma \ref{difference}. Similarly, if $s,t$ are both squares, then $u^4-v^4=r^2$ which proves 
     that $P_3 \not\in [2]E(F)$.  
     
     Similarly using Lemma \ref{Knapp}, we can find another point $Q_3$ where $[2]Q_3=Q_2$ such 
     that
        \begin{align*}
             x(Q_3)+s^2 &=\sqrt{(sr-s^2)sr}+\sqrt{(sr-s^2)(sr-r^2)}+
            \sqrt{(sr-r^2)sr} +sr  \\
                &=s\sqrt{r}\sqrt{r-s}+\sqrt{sr}\sqrt{-(r-s)^2}+r\sqrt{s}\sqrt{s-r}+sr \\
                & =-(-s)\sqrt{r}\sqrt{r-s}+\sqrt{r}\sqrt{-s}(r-s)+r\sqrt{-s}\sqrt{r-s}-(-s)r \\
                &=1/2 \sqrt{-sr}(\sqrt{r}+\sqrt{r-s}-\sqrt{-s})^2 
        \end{align*} 
     If $Q_3$ is in $[2]E(F)$, then $x(Q_3)+s^2$ is a square in $F$. By lemma \ref{square}, $-sr$ is a square in 
     $K$. Hence as we discussed earlier, $s$ and $r$ are either both squares or both $i$ times a 
     square. In both cases, we 
     obtain a non-trivial solution for the equation $x^4-y^4=z^2$ which is not possible. Therefore, $Q_3$ 
     is not in $[2]E(F)$. 
     
     We will show next that there are no points of order $16$ in $E(F)$.      
     Let $R_3$ be in $E(F)$ such that $[2]R_3=P_2+Q_2=R_2$, where $[2]R_2=(-t^2,0)$. We find that
                      \[
                      x(R_3)+t^2=(1/2)\sqrt{tri}(\sqrt{t i}+\sqrt{r}+\sqrt{r+ti})^2. 
                      \]     
     Hence if $R_3$ is in $[2]E(F)$, then $t r i$ has to be a square. This leads to a contradiction to 
     Lemma \ref{diophantine} as earlier.
       Hence $R_3$ is not in $[2]E(F)$ either.          
     
     Notice that $R_3=[a] P_3+ [b]Q_3$ for some odd integers $a,b$. We will assume for 
     simplicity that $a=b=1$. (The following discussion can be modified for general $a,b$.)
     
     Assume that there is a point $P$ of order $16$ in $E(F)$. Then $2[P]=[k]P_3+[l]R_3$ for some 
     $k,l\in \Z$. Define
        \[ 
        Q= \begin{cases} 
           [(k-1)/2]P_3+[l/2]R_3   & \text{if k is odd, l is even} \\
           [k/2]P_3+[(l-1)/2]R_3  & \text{if l is odd, k is even} \\
           [(k+1)/2]P_3+[(l-1)/2]R_3 & \text{if k,l are both odd }
              \end{cases}
        \]
      Then, $[2](P-Q)$ is either $P_3, R_3$ or $Q_3$ which is not possible as we showed earlier. Hence, $\sixteen \not\subset E(F)$ and $E(F)\simeq \eight \oplus \eight $.

    \end{proof}
    
    \begin{prop} \label{2,4}
       Assume that $E(K)_{\text{tors}}\simeq \two \oplus \four$. 
        \begin{enumerate} 
            \item Let $K=\Q(i)$. Then $E(F)$ does not contain a subgroup isomorphic to $\eight \oplus 
                 \eight$.
            \item Let $K=\Q(\sqrt{-3})$. Then $E(F)$ contains a subgroup isomorphic to $\eight \oplus 
                 \eight$ only if $E^{-1}(K)$ has a point of order $4$ in which case $E(F)\simeq \Z/8\Z \oplus 
                      \Z/8\Z$. 
        \end{enumerate}
    \end{prop}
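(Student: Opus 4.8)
My plan is to adapt the method of Proposition \ref{4,4}. Write $E=E(a,b)$; since $E(K)_{\text{tors}}\simeq\two\oplus\four$, after using the isomorphisms among $E(a,b)$, $E(-a,b-a)$, $E(-b,a-b)$ I may assume by Theorem \ref{Ono} that $a=s^2$, $b=t^2$ with $s,t\in K$ and that the order-$4$ point $P_2$ halves $(0,0)$, so $x(P_2)=\pm st$. Because $E(K)$ equals $\two\oplus\four$ and not $\four\oplus\four$, the element $s^2-t^2$ is not a square in $K$: otherwise the halving $Q_2$ of $(-s^2,0)$ would be $K$-rational and $E(K)$ would contain $\four\oplus\four$. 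As every element of $K$ is a square in $F$, Lemma \ref{Knapp} gives $E[4]\subset E(F)$, so $\eight\oplus\eight=E[8]\subset E(F)$ is equivalent to $P_2,Q_2,R_2\in[2]E(F)$, where $Q_2,R_2$ halve $(-s^2,0),(-t^2,0)$. Here $P_2\in[2]E(F)$ always, since $x(P_2)$ together with $x(P_2)+s^2$ and $x(P_2)+t^2$ all lie in $K$; so everything reduces to halving $Q_2$ and $R_2$.

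For the necessary conditions I would examine $Q_2$. By Lemma \ref{Knapp}, $Q_2\in[2]E(F)$ requires $x(Q_2)+s^2=\sqrt{s^2(s^2-t^2)}$ to be a square in $F$. Writing $\alpha=s^2(s^2-t^2)\in K$ and applying Lemma \ref{square}: if $K=\Q(i)$ then $\alpha$ is forced to be a square in $K$, hence $s^2-t^2$ is a square, contradicting $E(K)\simeq\two\oplus\four$; this proves part (1). If $K=\Q(\sqrt{-3})$ then $\alpha$ or $-\alpha$ is a square in $K$, and since $s^2-t^2$ is not a square we must have $t^2-s^2$ a square in $K$. Now Theorem \ref{Ono}(1) applied to $E^{(-1)}=E(-s^2,-t^2)$ shows that $t^2-s^2$ being a square in $K$ is precisely the condition that $E^{(-1)}(K)$ have a point of order $4$, namely the halving of its $2$-torsion point $(t^2,0)$. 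This is the ``only if'' direction of part (2); the dichotomy between the two fields reflects exactly whether $-1$ is a square in $K$, since over $\Q(i)$ the conditions ``$s^2-t^2$ square'' and ``$t^2-s^2$ square'' coincide.

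For the converse of part (2), set $t^2-s^2=r_0^2$ with $r_0\in K$, so that $\sqrt{s^2-t^2}=ir_0\in K(i)\subset F$. Then $x(R_2)=\pm tr_0-t^2\in K$ and its shifts $x(R_2),x(R_2)+s^2,x(R_2)+t^2$ all lie in $K$, so $R_2\in[2]E(F)$ at once. For $Q_2$ we have $x(Q_2)=\pm isr_0-s^2\in K(i)\setminus K$, with the three shifts $s(ir_0-s)$, $i\,sr_0$ and $r_0(r_0+is)$. Since $i=\zeta_8^2$ is a square in $F$ (because $\zeta_8=(1+i)/\sqrt2\in F$) and $sr_0\in K$, the middle shift is a square in $F$. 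For the other two I would invoke the elementary fact that any $\beta\in K(i)$ whose norm $N_{K(i)/K}(\beta)=\beta\bar\beta$ is a square in $K$ is itself a square in $F$, because then $K(i,\sqrt\beta)/K$ is an elementary abelian $2$-extension and hence contained in $F$. As $N_{K(i)/K}(s(ir_0-s))=(st)^2$ and $N_{K(i)/K}(r_0(r_0+is))=(r_0t)^2$, both shifts are squares in $F$, so $Q_2\in[2]E(F)$ and therefore $\eight\oplus\eight\subset E(F)$.

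To conclude $E(F)\simeq\eight\oplus\eight$ I would rule out a point of order $16$ exactly as in Proposition \ref{4,4}: each halving $P_3,Q_3,R_3$ of $P_2,Q_2,R_2$ fails to lie in $[2]E(F)$, since otherwise Lemma \ref{square} would force a quantity such as $st$ to be a square in $K$, and writing $s=\epsilon u^2,\,t=\epsilon v^2$ with $\epsilon\in\{1,i\}$ and substituting into $t^2-s^2=r_0^2$ yields a nontrivial solution of $x^4-y^4=z^2$, contradicting Lemma \ref{diophantine}(\ref{difference}); one then argues as in Proposition \ref{4,4} that no point of order $16$ can exist. The odd part vanishes because every quadratic twist of $E$ has torsion $\two\oplus\two$ or $\two\oplus\four$ by Theorem \ref{Burt}, so $E(F)_{2'}=0$ by Lemma \ref{Laska-Lorenz}; combined with Theorem \ref{Laska} this forces $E(F)\simeq\eight\oplus\eight$. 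I expect the main obstacle to be the converse in part (2): proving that $t^2-s^2\in(K^*)^2$ really does allow $Q_2$ to be halved over $F$ even though $x(Q_2)\notin K$, which is where the norm-square criterion, rather than a direct application of Lemma \ref{square}, is needed.
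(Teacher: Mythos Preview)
Your overall strategy matches the paper's, and your converse argument for part~(2) via the norm-square criterion is in fact more explicit than what the paper writes (the paper simply says ``compute points of order $8$ as in the proof of Proposition~\ref{4,4}''). There is, however, a genuine gap in your ``only if'' direction for $K=\Q(\sqrt{-3})$.

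You assert at the outset that $s^2-t^2$ is not a square in $K$ because otherwise the halving $Q_2$ of $(-s^2,0)$ would be $K$-rational. This is correct for $K=\Q(i)$, but fails for $K=\Q(\sqrt{-3})$: if $s^2-t^2=r^2$ with $r\in K$, then one computes $Q_2=(sr-s^2,\pm i\,sr(r-s))$, and since $i\notin\Q(\sqrt{-3})$ this point is \emph{not} $K$-rational. So $E(K)_{\text{tors}}\simeq\two\oplus\four$ is perfectly compatible with $s^2-t^2$ being a square in $\Q(\sqrt{-3})$; the example $s=5$, $t=3$, $r=4$ already exhibits this. Consequently your deduction ``since $s^2-t^2$ is not a square we must have $t^2-s^2$ a square'' is unjustified, and your claim that $t^2-s^2\in(K^*)^2$ is ``precisely the condition'' for $E^{(-1)}(K)$ to have a point of order~$4$ is also wrong: by Theorem~\ref{Ono} applied to $E^{(-1)}=E(-s^2,-t^2)$, the three alternatives are $\{-s^2,-t^2\}$, $\{s^2,s^2-t^2\}$, $\{t^2,t^2-s^2\}$ being pairs of squares, and over $\Q(\sqrt{-3})$ this means $s^2-t^2$ \emph{or} $t^2-s^2$ is a square.

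The fix is simple and is what the paper does: treat both cases. If $s^2-t^2=r^2$, then $Q_2=(sr-s^2,\,i\,sr(r-s))\in E(K(i))$ yields the $K$-rational order-$4$ point $(sr-s^2,\,sr(r-s))$ on $E^{(-1)}$; if $t^2-s^2=r^2$, the analogous point comes from $R_2$. Either way $E^{(-1)}(K)$ has a point of order~$4$, and the two cases are exchanged by swapping $s\leftrightarrow t$, so for the converse and the order-$16$ exclusion you may indeed assume $t^2-s^2=r_0^2$ without loss of generality. One minor slip in your order-$16$ sketch: over $\Q(\sqrt{-3})$ the units modulo squares are represented by $\{\pm1\}$, not $\{1,i\}$, so write $s=\pm u^2$, $t=\pm v^2$ rather than $s=\epsilon u^2$ with $\epsilon\in\{1,i\}$.
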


    \begin{proof}
       By Lemma \ref{Knapp}, we know that $\four \oplus \eight \subset E(F)$. We first determine when 
       $E(F)$ has a subgroup isomorphic to $\eight \oplus \eight$. By Theorem \ref{Ono}, we may 
       assume that $a=s^2$ and $b=t^2$ for some $s,t \in \mathcal{O}_K$ relatively prime. Suppose that    
       $\eight \oplus \eight \subset E(F)$. 

       Let  $Q_1=(-s^2,0)$ and let $Q_2$ be in $E(F)$ so that $[2]Q_2=Q_1$.
       We compute                   
                     \[ 
                      x(Q_2)=-s^2+s\sqrt{s^2-t^2}.   
                        \]
         Then there is a point $Q_3 \in E(F)$ such that $[2]Q_3=Q_2$. Hence by Lemma 
         \ref{Knapp}, 
                     \[   
                     x(Q_2)+s^2=s\sqrt{s^2-t^2} 
                     \]
         is a square in $F$ which implies that $\sqrt{s^2-t^2}$ is a square in 
         $F$. Hence either $s^2-t^2$ or $t^2-s^2$ is a square in $K$  by Lemma \ref{square}. 
        
       Let $K=\Q(i)$ and let $s^2-t^2=r^2$ for some $r \in K$. Then we compute that 
                 \[
                 Q_2=(s r-s^2, i s r(r-s))
                  \]  
       The points $P_2$ and $Q_2$ generate $E[4]$ and they are both in $E(K)$. This 
       contradicts the fact that $E(K)\simeq \two \oplus \four$. Note that there is no 
       need to consider the case $t^2-s^2$ is a square separately since $-1$ is a square in $K$.     
       
       Now we may assume that $K=\Q(\sqrt{-3})$. If  $s^2-t^2=r^2$, then $Q_2=(-s^2+sr,isr(r-s))$ in $E(K(i))
       $ induces a point of order $4$ on the quadratic twist $E^{(-1)}$ of $E$, namely $(-s^2+sr,sr(r-s))
       $. 
       Similarly, if $s^2-t^2=-r^2$, then $R_2=(-t^2+rt, irt(r-t))
       $ gives rise to a $K$-point of order $4$ on $E^{(-1)}$. Hence we conclude that $E(F)
       $ does not contain the full $8$-torsion if $E^{(-1)}(K)$ does not have a point of order $4$.
       
       Assume $E^{(-1)}(K)$ has a point of order $4$. Then we can show $E(F)$ contains a subgroup 
       isomorphic to $\eight \oplus \eight$ by computing points of order $8$ as we did in the proof of 
       Proposition \ref{4,4}.
       
       Next we will show that there is no point of order $16$ in $E(F)$. 
       We know that there is a point $P_3$ in $E(F)$ such that $[2]P_3=P_2$. We will show that $P_3 
       \not\in [2]E(F)$. We compute
              \[
              x(P_3)=(1/2) \sqrt{st}(\sqrt{s} +\sqrt{t}+\sqrt{ s+t})^2. 
              \]
        Assume $P_3$ is in $[2]E(F)$. Then $\sqrt{ st}$ is a square in F and so $st$ or $-st$ is a square 
        in $K$. Since $s$ and $t$ are relatively prime, $s=\pm du^2$ and $t= dv^2$ for some unit $d$ 
         in $\mathcal{O}_K$. 
        The only square-free units in $\mathcal{O}_K$ are $\{\pm1\}$, hence $d=\pm1$.
        In each case we obtain a non-trivial solution (over $K$) to the equation $x^4- y^4=z^2$ since 
        $s^2-t^2$ or $t^2-s^2$ is a square.  This is not possible by 
        Lemma \ref{difference}. This shows that $P_3$ is not in $[2]E(F)$. 
        
        Let $s^2-t^2=r^2$. Then using the computations of points $Q_3$ and $R_3$ in the proof of 
        Proposition \ref{4,4}, we see that $Q_3$ and $R_3$ can not be in $[2]E(F)$. This can be shown 
        with a similar argument we used to show $P_3$ is not in $[2]E(F)$. With a similar discussion 
        to the proof of Proposition \ref{4,4} we construct a point $Q$ in $E(F)$ and show that $E(F)$ 
        does not contain a point of order $16$.
       
        If $s^2-t^2=-r^2$, for a similar argument, use $x(Q_3)$ and $x(R_3)$ which we computed in 
        Proposition \ref{4,4}. 
        
        See \cite[Proposition 4.6]{Fujita2} for a similar result over $\Q$.

     \end{proof}
    \begin{theorem}\label{2,2}
    
     Let $K$ be the quadratic field $\Q(i)$ or $\Q(\sqrt{-3})$. Assume 
    $E(K)_{\text{tors}}\simeq \two \oplus \two$. Then $E(F)_{\text{tors}}$ is isomorphic to one of the 
    groups listed in Proposition \ref{2,8}, Proposition \ref{2,6}, Proposition \ref{2,4}, Proposition 
    \ref{4,4}, or the group $\four \oplus \four$.
    \end{theorem}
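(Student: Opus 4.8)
The plan is to reduce the entire statement to the behaviour of the quadratic twists of $E$ over $K$. The key observation is that for every $d \in \mathcal{O}_K$ the curves $E$ and $E^{(d)}$ become isomorphic over $K(\sqrt d) \subseteq F$, so $E(F)_{\text{tors}} \cong E^{(d)}(F)_{\text{tors}}$; thus we are free to replace $E$ by any of its quadratic twists when computing torsion over $F$. Note also that since $E(K)$ has full $2$-torsion, so does every twist $E^{(d)}$ (the $2$-torsion points of $E^{(d)}$ are obtained from those of $E$ by scaling the $x$-coordinate by $d$), so $E^{(d)}(K)_{\text{tors}}$ always contains $\two \oplus \two$.

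First I would dispose of the case in which some twist has larger $K$-torsion. If there is a $d \in \mathcal{O}_K$ with $E^{(d)}(K)_{\text{tors}} \not\simeq \two \oplus \two$, then by the classification of torsion over quadratic cyclotomic fields (\cite[Theorem~2]{Najman-cyclotomic}) the group $E^{(d)}(K)_{\text{tors}}$ properly contains $\two \oplus \two$ and hence equals $\two \oplus \four$, $\two \oplus \six$, $\two \oplus \eight$, or (only over $\Q(i)$) $\four \oplus \four$. Applying Proposition \ref{2,4}, \ref{2,6}, \ref{2,8}, or \ref{4,4} to $E^{(d)}$ and using $E(F)_{\text{tors}} \cong E^{(d)}(F)_{\text{tors}}$ then places $E(F)_{\text{tors}}$ among the listed groups.

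It remains to treat the case where \emph{every} quadratic twist $E^{(d)}$ ($d\in\mathcal{O}_K$) satisfies $E^{(d)}(K)_{\text{tors}} \simeq \two \oplus \two$. Here each twist has trivial odd torsion, so Lemma \ref{Laska-Lorenz} (applied in a finite subextension containing the finite group $E(F)_{\text{tors}}$) forces $E(F)_{2'}=0$, whence $E(F)_{\text{tors}}$ is a $2$-group. Since every element of $K$ is a square in $F$, Lemma \ref{Knapp} lets us halve each of the three $2$-torsion points in $E(F)$, so $\four \oplus \four \subseteq E(F)$. To conclude $E(F)_{(2)} = \four \oplus \four$ I would rule out a point of order $8$: writing $E=E(a,b)$ and letting $P_2,Q_2,R_2$ be the three order-$4$ points lying over the $2$-torsion, a point of order $8$ in $E(F)$ would allow one of $P_2,Q_2,R_2$ to be halved in $F$; by Lemma \ref{Knapp} this makes a quantity such as $\sqrt{ab}$ a square in $F$, and Lemma \ref{square} then forces $\pm ab$ (respectively the analogous product under the isomorphisms $E(a,b)\cong E(-a,b-a)\cong E(-b,a-b)$) to be a square in $K$, producing an order-$4$ point on a quadratic twist defined over $K$ and contradicting the standing assumption. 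With no point of order $8$, Theorem \ref{Laska} forces the $2$-part to be exactly $\four \oplus \four$, and the odd part is trivial.

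The main obstacle is this last step: carrying out the halving computations for all three order-$4$ points and for both signs $\pm ab$, and verifying that the resulting descent genuinely yields an order-$4$ point on a twist over $K$. This is exactly the type of calculation already performed in Propositions \ref{2,6} and \ref{2,4}, so I would reuse those explicit $x$-coordinate formulas together with Lemma \ref{square} and Theorem \ref{Burt} rather than redo them from scratch.
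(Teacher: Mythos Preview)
Your proposal is correct and matches the paper's approach essentially line for line: the paper first reduces to the case where every twist has $E^{(d)}(K)_{\text{tors}}\simeq\two\oplus\two$ via the isomorphism $E(F)\simeq E^{(d)}(F)$, and then simply says ``the rest of the proof is the same as the proof of Proposition~\ref{2,6}.'' Your write-up just unpacks that reference---using Lemma~\ref{Laska-Lorenz} for the odd part and the halving computation with Lemma~\ref{square} to exclude a point of order~$8$---with the standing assumption on twists playing the role that Theorem~\ref{Burt} played in Proposition~\ref{2,6}; the reference to Theorem~\ref{Burt} in your final paragraph is therefore unnecessary here, since the contradiction comes directly from the hypothesis that all twists have torsion $\two\oplus\two$.
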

    
    \begin{proof}
    A quadratic twist of $E$ can have torsion subgroup isomorphic to $\two  \oplus\eight$,  $\two \oplus
    \six$, $\two \oplus \four$, $\two \oplus \two$ or $\four \oplus \four$. 
   Note that since $E$ and $E^{(d)}$ are isomorphic over a quadratic extension of $K$,
          \[ 
          E(F)_{\text{tors}}\simeq E^{(d)}(F)_{\text{tors}}.    
          \]
     Hence if $E^{(d)}(K)_{\text{tors}}\not\simeq\two\oplus \two$ for some $d \in K$, then 
    $E(F)_{\text{tors}}$ will 
     be one of the groups listed in Proposition \ref{2,8}, Proposition \ref{2,6}, Proposition \ref{2,4}, 
     Proposition \ref{4,4}. 
   Therefore we may assume that $E^{(d)}(K)\simeq \two \oplus \two$ for all $d\in K$. The rest of the     
   proof is same as the proof of Proposition \ref{2,6}.
   
   See \cite[Proposition 4.5]{Fujita2} for a similar result over $\Q$.

    \end{proof}

\section{$E(K)_{\text{tors}}$ is Cyclic}

Let $E:y^2=f(x)$ be an elliptic curve with $E(K)_{\text{tors}}\simeq \Z/N\Z$. If $N$ is odd, then there 
    is no point of order $2$ in $E(K)$. Since the $2$-torsion points on $E$ are $(\alpha_i,0)$ where $
    \alpha_i$ are the roots of $f$, $E(K)_{\text{tors}}$ being odd implies $f$ is irreducible over $K$. 
    Therefore, $f$ is irreducible over $F$. Then $E(F)_{\text{tors}}$ is also odd and we analyzed this 
    case in \ref{odd section}. Hence, we assume that $N$ is even.

   We will need the following lemma to show that $\eight \oplus \eight$ is not contained in $E(F)$ when 
   $E(K)$ is cyclic.
        \begin{lemma}\label{square i}
             Let $K$ be the field $\Q(\sqrt{-3})$ and let $F$ be the maximal elementary abelian $2$-extension of $K$. Suppose $a \in K$. Then $\sqrt{a i}$ can not be a square in $F$.
       \end{lemma}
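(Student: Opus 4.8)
The plan is to argue by contradiction: assume $\sqrt{ai}$ is a square in $F$, so there is $w\in F$ with $w^{2}$ a square root of $ai$, whence $w^{4}=ai$ and $w^{8}=-a^{2}\in K$. Since $-1,2\in\mathcal{O}_K$ we have $i=\sqrt{-1}\in F$ and $\sqrt 2\in F$, so the field $M=K(\zeta_8)=K(i,\sqrt 2)$ lies inside $F$, with $\Gal(M/K)\cong\two\oplus\two$ (as $-1,2,-2$ are non-squares in $\Q(\sqrt{-3})$). The observation driving everything is that $i=\zeta_8^{2}$ is \emph{already} a square in $M$, so for $a\in K^{\times}$ the element $ai$ is a square in $M$ if and only if $a$ is. This mirrors the structure of the proof of Lemma~\ref{square}: produce either an irreducible quartic giving a forbidden cyclic piece, or a clean descent.

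First I would dispose of the generic case, where $a$ is not a square in $M$. Then $ai\notin(M^{\times})^{2}$, and because $i\in M$ the polynomial $x^{4}-ai$ is irreducible over $M$ (the criterion for $x^{4}-c$ over a field containing $\mu_4$ reduces to $c\notin(M^{\times})^{2}$, since $-4=(2i)^{2}$ makes $-4(M^\times)^4\subseteq(M^\times)^2$). Its splitting field is $M(w)$, a cyclic extension of $M$ of degree $4$ generated by $w\mapsto iw$. But $w\in F$ forces $M(w)\subseteq F$, and $\Gal(F/M)$ is a subgroup of the elementary abelian $2$-group $\Gal(F/K)$, hence is itself elementary abelian and admits no quotient isomorphic to $\four$. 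This contradiction eliminates every $a$ that is not a square in $M$.

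It remains to treat the boundary case $a\in(M^{\times})^{2}$. By the standard description of squares in a multiquadratic extension, $(M^{\times})^{2}\cap K^{\times}=(K^{\times})^{2}\langle -1,2\rangle$, so $a$ is, up to a square in $K$, one of $1,-1,2,-2$. Replacing $a$ by $ak^{2}$ with $k\in K^{\times}$ multiplies $\sqrt{ai}$ by $k$, which is a square in $F$, and so does not change whether $\sqrt{ai}$ is a square in $F$; hence I may assume $a\in\{1,-1,2,-2\}$. A direct computation then gives $\sqrt{ai}=\zeta_8,\ \zeta_8^{-1},\ 1+i,\ 1-i$ respectively, whose square roots are $\pm\zeta_{16}^{\pm 1}$ and $\pm 2^{1/4}\zeta_{16}^{\pm 1}$.

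Finally I would check that these square roots do not lie in $F$. The field $\Q(\zeta_{16})$ is linearly disjoint from $K$ and $\Gal(\Q(\zeta_{16})/\Q)\cong\two\oplus\four$ is not elementary abelian, so $\zeta_{16}\notin F$; likewise $\beta=2^{1/4}\zeta_{16}$ has minimal polynomial $x^{4}-2x^{2}+2$ over $\Q$ whose Galois closure has group $D_4$, and this group persists over the disjoint field $K$ and is non-abelian, so $\beta\notin F$. In every case $\sqrt{ai}$ fails to be a square in $F$, proving the lemma. The main obstacle is precisely this boundary analysis: once the generic cyclic-degree-$4$ obstruction no longer applies, one must identify exactly which $a$ survive and verify by hand that the associated $16$th-root and $2^{1/4}\zeta_{16}$ elements generate non-elementary-abelian extensions of $K$.
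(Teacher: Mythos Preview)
Your proof is correct and shares the same skeleton as the paper's: reduce to $x^4-ai$ having a root in $F$, force $ai$ to be a square in an intermediate field, and then show the resulting fourth root (e.g.\ $\sqrt{1+i}$, with minimal polynomial $x^4-2x^2+2$) generates an extension of $K$ whose Galois group is not elementary abelian.

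The main difference is your choice of intermediate field. You work over $M=K(\zeta_8)$, which is degree~$4$ over $K$, while the paper works over $K(i)$, degree~$2$. The paper invokes the argument of Lemma~\ref{square} directly: if $w^4=ai$ with $w\in F$, then $x^4-ai$ cannot stay irreducible over $K(i)$ (else one gets a cyclic quartic inside $F$), so $ai$ is already a square in $K(i)$; writing $ai=(b_1+b_2 i)^2$ with $b_j\in K$ and comparing real parts forces $a=\pm 2b^2$, reducing everything to the single obstruction $\sqrt{1\pm i}\in F$. Your route through $M$ is equally valid but produces four boundary cases $a\in\{\pm 1,\pm 2\}$, and you then dispatch $a=\pm 1$ via $\zeta_{16}$ and $a=\pm 2$ via $2^{1/4}\zeta_{16}$ (which is exactly the paper's $\beta=\sqrt{1+i}$). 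For the final contradiction the paper uses the slick observation that the splitting field of $x^4-2x^2+2$ has degree~$8$ over $K$ while $S_4$ contains no elementary abelian $2$-group of that order; you instead name the Galois groups $\two\oplus\four$ and $D_4$ explicitly. Both arguments are fine; the paper's reduction through $K(i)$ is a bit more economical.
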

          \begin{proof}
              Suppose that $\sqrt{a i}$ is a square in $F$. Then the proof of Lemma \ref{square} shows that 
              $a i$ is a square in $K(i)$. Then 
                            \[ 
                            a i=b^2(1+i)^2 \hspace{2mm} \text{or} \hspace{2mm} a i=b^2(1-i)^2 \hspace{2mm}  
                                  \text{with} \hspace{2mm} b \in K.
                                  \] 
             Then $a=\pm 2b^2$. Hence $\sqrt{a i}$ is equal to $b(1\pm i)$ and we obtain that 
             $(1\pm i)$ is a square in $F$. 
             Now let $\beta=\sqrt{1+i}$. 
             Hence $(\beta^2-1)^2+1=0$ and we see that $\beta$ is a root of the polynomial 
                                 \[ 
                                 f(x)=x^4-2x^2+2. 
                                 \] 
                  We observe that the degree of the splitting field of $f$ is $8$ and that $F$ has to contain 
                  the 
              splitting 
              field of $f$ since it contains $\beta$, hence the Galois group of $f$ over $K$ has to be an 
              elementary abelian $2$-group. Notice that the Galois group of $f$ is a subgroup of $S_4$. 
              Since $S_4$ does not have any 
              elementary abelian 2-subgroup of order $8$, we get a contradiction. Hence neither $\sqrt{1+i}
              $ nor $\sqrt{1-i}$ is a 
              square in $F$ and this proves that $\sqrt{a i}$ can not be a square in $F$ for any $a \in K$.

   \end{proof}

    \begin{prop} \label{cyclic} 
             Let $E$ be an elliptic curve over $K$ and suppose that \\ $E(K)_{\text{tors}} \simeq \Z/2N\Z$ for   
             some integer $N$.
         \begin{enumerate} 
               \item If $K=\Q(i)$, then $\four \oplus \four \not\subset E(F)$. 
               \item If $K=\Q(\sqrt{-3})$, then
                           $\eight \oplus \eight \not\subset E(F)$. 
          \end{enumerate}
     \end{prop}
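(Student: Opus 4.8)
The plan is to reduce both parts to repeated halving of $2$-torsion points via Lemma \ref{Knapp} and to feed the resulting square conditions into the two square lemmas \ref{square} and \ref{square i}. Since $E(K)_{\text{tors}}\simeq\Z/2N\Z$ is cyclic, $E$ has a unique $K$-rational point of order $2$; translating it to the origin, I would assume $E\colon y^2=x(x^2+bx+c)$ with $q(x)=x^2+bx+c$ irreducible over $K$, so that $b^2-4c$ is a non-square in $K$. Over $F$ the curve gains full $2$-torsion: writing the roots of $x\,q(x)$ as $0,-s,-t$ with $s+t=b$, $st=c$ and $(s-t)^2=b^2-4c$, I set $\delta=s-t$, so $K(E[2])=K(\delta)$ is a genuine quadratic extension inside $F$. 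Two preliminary remarks drive everything. First, $\Z/2^{k}\Z\oplus\Z/2^{k}\Z\subset E(F)$ exactly when $E[2^{k}]\subset E(F)$, and $-1$ is a square in $F$ in both cases (as $i\in F$). Second, by Lemma \ref{Knapp} the condition $E[4]\subset E(F)$ forces $s$, $t$ and $\delta$ to be squares in $F$; since $s,t$ are $\Gal(K(\delta)/K)$-conjugate and $F/K$ is Galois, ``$s$ a square in $F$'' is equivalent to ``$t$ a square in $F$'', so these collapse to: $s$ and $\delta$ are squares in $F$.

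For part (1) with $K=\Q(i)$, I would argue directly. Suppose $\four\oplus\four=E[4]\subset E(F)$. Then $\delta=\sqrt{b^2-4c}$ is a square in $F$, so the $\Q(i)$ clause of Lemma \ref{square} (applied with $\alpha=b^2-4c\in K$) forces $b^2-4c$ to be a square in $K$, contradicting the irreducibility of $q$. Hence $\four\oplus\four\not\subset E(F)$; this part is clean and needs no further computation.

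For part (2) with $K=\Q(\sqrt{-3})$, suppose $\eight\oplus\eight=E[8]\subset E(F)$. Since $E[8]\supset E[4]$, the reduction above gives that $\delta$ is a square in $F$, and Lemma \ref{square} now yields that $b^2-4c$ or $-(b^2-4c)$ is a square in $K$. The former is impossible, so $-(b^2-4c)=m^2$ for some $m\in K$; thus $\delta=im$, $K(E[2])=K(i)$, and $\sqrt{im}=\sqrt{\delta}\in F$ while, crucially, Lemma \ref{square i} (with $a=m$) tells me $\sqrt{im}$ is \emph{not} a square in $F$. I would then pass to the next level: $E[8]\subset E(F)$ requires each order-$4$ point to be halvable over $F$. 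Applying Lemma \ref{Knapp} to the order-$4$ point $Q$ with $[2]Q=(-s,0)$, whose $x$-coordinate is $-s+\sqrt{s(s-t)}=-s+\sqrt{ims}$, the halving condition that $x(Q)+s=\sqrt{ims}$ be a square in $F$, together with the $\Gal(K(i)/K)$-conjugate condition from $(-t,0)$ and the condition that the half of $(0,0)$ (with $x$-coordinate $\sqrt{st}=\sqrt{c}$) again be halvable, must be combined and simplified using $-1,i,2\in(F^{*})^{2}$.

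The main obstacle is precisely this final bookkeeping. I expect that, after clearing the squares $\sqrt{m}$, $\sqrt{c}$, $\sqrt{2}$ and $i$ and using the conjugation $s\leftrightarrow t$ to eliminate $s$ and $t$, the coupled halving conditions force a quantity of the shape $\sqrt{ai}$ with $a\in K$ to be a square in $F$ --- exactly what Lemma \ref{square i} forbids --- yielding the contradiction and hence $\eight\oplus\eight\not\subset E(F)$. The delicate step is to isolate a genuine $\sqrt{ai}$ factor rather than a mixed product such as $\sqrt{im}\cdot\sqrt{s}$, since a multiplicative square-class analysis alone stays consistent; this is where the hypothesis $K=\Q(\sqrt{-3})$ and the purpose-built Lemma \ref{square i} do the real work, the Weil pairing giving no obstruction because $\mu_8\subset F$ already.
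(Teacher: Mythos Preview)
Your argument for part (1) is correct and is exactly the paper's proof: the condition $E[4]\subset E(F)$ forces $\delta=\sqrt{b^2-4c}$ to be a square in $F$, and over $\Q(i)$ Lemma \ref{square} then makes $b^2-4c$ a square in $K$, contradicting cyclicity.

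Part (2) has a genuine gap, which you yourself flag. After reducing to $K(E[2])=K(i)$ and writing the conjugate roots as $a\pm bi$, the quantity you must show is a non-square in $F$ is $\sqrt{\alpha(\alpha-\bar\alpha)}=\sqrt{(a+bi)\cdot 2bi}$, and as you note this is \emph{not} of the form $\sqrt{(\text{element of }K)\cdot i}$, so Lemma \ref{square i} does not apply directly; a pure square-class chase will not close the argument. The missing idea is to first exploit the halving condition over $(0,0)$. If $E[8]\subset E(F)$ then the order-$4$ point $P_2$ above $(0,0)$, with $x(P_2)=\sqrt{\alpha\bar\alpha}=\sqrt{a^2+b^2}$, must itself lie in $[2]E(F)$; by Lemma \ref{Knapp} and Lemma \ref{square} this forces $a^2+b^2=\pm d^2$ with $d\in K$. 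Now parametrise this Pythagorean relation: $a=k(m^2-n^2)$, $b=2kmn$, $d=k(m^2+n^2)$ with $m,n,k\in K$ (up to a twist by $2$ one may arrange that $m^2+n^2$, not $2(m^2+n^2)$, is the square). The point is that this makes $a+bi=k(m+ni)^2$, so $\alpha$ is a square in $K(i)$ up to an element of $K$; consequently
\[
\sqrt{\alpha(\alpha-\bar\alpha)}=\sqrt{k(m+ni)^2\cdot 4kmn\,i}=2k(m+ni)\sqrt{mn\,i},
\]
and now $mn\in K$, so Lemma \ref{square i} applies verbatim to give the contradiction. Without this parametrisation there is no way to strip the $K(i)$-irrationality out of $\sqrt{ims}$, and your proposed ``combine and simplify'' step cannot succeed.
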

     
     \begin{proof}
            Suppose $E$ is given by the equation $y^2=f(x)$. Then $f$ has exactly one root in $K$. 
            Without 
            loss of generality, we        
            may assume that $E$ is given by 
                                 \[
                                 y^2=f(x)=x(x-\alpha)(x-\bar{\alpha})
                                 \]
            where $\bar{\alpha}$ 
            denoted the complex conjugate of $\alpha$. 
            We may write 
            $\alpha$ and $\bar{\alpha}$ as $a+b\sqrt{c}$ and $a-b\sqrt{c}$ for some square-free $c$ since 
            they are  
            defined over a quadratic field. Then 
                            \[
                             \alpha-\bar{\alpha}=2b\sqrt{c}. 
                             \]
             If the point $Q_1=(\alpha,0)$ is in $[2]E(F)$, then by 
             Lemma \ref{Knapp}, $\alpha-\bar{\alpha}$ is a square in $F$  and 
             hence either $c$ or $-c$ is a square by Lemma \ref{square}.     
             If $K=\Q(i)$, then $Q_1 \not\in [2]E(F)$ which proves that $\four \oplus \four \not
             \subset E(F)$ when $K=\Q(i)$.

            Let $K=\Q(\sqrt{-3})$. If $\four \oplus \four \subset E(F)$, then $c=-1$ and we may assume that  
            $\alpha=a+bi$. Let $P_2$ be a point such that $[2]P_2=(0,0)$. 
            Suppose that $E[8] \subset E(F)$. Then $P_2$ is in $[2]E(F)$ and Theorem \ref{Ono} implies 
            that 
            $x(P_2)= \sqrt{\alpha \bar{\alpha}},\hspace{1mm} x(P_2)-\alpha$ and $x(P_2)-\bar{\alpha}$ are 
            all squares in $F$. 
            Hence by Lemma \ref{square},                      
                             \[
                              \alpha\bar{\alpha}=a^2+b^2= d^2 \hspace{2mm} \text{or} \hspace{2mm} \alpha
                                       \bar{\alpha}=a^2+b^2= -d^2 
                                       \]  
            for some $d$ in $K$. We obtain either  
                \begin{equation} \label{square1}
                 (x(P_2)-\alpha)(x(P_2)-\bar{\alpha})=2d(d-a)= e^2   
                \end{equation} 
                or
               \begin{equation} \label{square2} 
               (x(P_2)-\alpha)(x(P_2)-\bar{\alpha})=2d(d+ai)= f^2
               \end{equation}
            for some $e,f \in K$.
           We can parametrize $a,b,d$ as              
                 \[   
                 a=k(m^2-n^2), \hspace{2mm} b=2kmn,\hspace{2mm} \text{and} \hspace{2mm}  
                             d=k(m^2+n^2) 
                             \]
            for some $k,m,n \in \mathcal{O}_K$. 
            We set $d= k i(m^2+n^2)$ if $a^2+b^2=-d^2$. 
           
            Equation \ref{square1} and \ref{square2} gives us either $m^2+n^2$ or  
           $2(m^2+n^2)$ is a square in $K$. Suppose $2(m^2+n^2)$ is a square in $K$, then $m^2+n^2$ 
           is divisible by $2$ and so is $ m^2-n^2$. This means that $a,b,d$ are all divisible by $2$.    
           (Notice that $2$ remains as a prime in $\mathcal{O}_K$.) In this case, we can replace $E$ by 
            the quadratic twist $E^{(2)}$ of $E$ since $E(F)\simeq E^{(2)}(F)$ and they both have 
           cyclic torsion subgroup over $K$. 
           
           Therefore it is enough to consider the case where $a,b$ are not both divisible by $2$.
           We will assume that $m^2+n^2$ is a square in $K$, then we compute $x(Q_2)$ where $Q_1=(\alpha,0)=[2]Q_2$. We know that                   
                            \begin{align*} 
                            x(Q_2)-\alpha &=\sqrt{\alpha(\alpha-\bar{\alpha})}  \\ 
                                                                                &=2(m-n i)\sqrt{mn i} 
                            \end{align*}
  
          is a square in $F$, hence $\sqrt{m n i}$ has to be a square in $F$ but this is not 
          possible by 
           Lemma 
           \ref{square i}. Note that switching the parametrization 
            of $a$ and $b$ does not change the result.  
    \end{proof}

    \begin{prop}[{\cite[Lemma~13]{Fujita2}}]  \label{4} 
                     Let $E(K)$ be cyclic. Then $E(F)$ contains a point of order $4$ if and only if there exist a 
                     $d \in \mathcal{O} _K$ such that $E^{(d)}(K)$ has a point of order $4$.     
     \end{prop}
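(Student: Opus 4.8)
The plan is to prove the forward implication directly and the converse by an explicit $2$-power descent, the $K$-analogue of Fujita's argument over $\Q$. For the easy direction, if $E^{(d)}(K)$ has a point of order $4$ then, since $E$ and $E^{(d)}$ are isomorphic over $K(\sqrt d)\subseteq F$, we have $E(F)\simeq E^{(d)}(F)$ and the point survives. For the converse I first note that if $N$ is odd then $f$ is irreducible over $K$, hence over $F$, so neither $E(F)$ nor any twist has $2$-torsion and both sides fail; so assume $N$ is even. Then, as at the start of this section, $E\colon y^2=x(x-\alpha)(x-\bar\alpha)$ with $T_0=(0,0)\in E(K)$ of order $2$ and $T_1=(\alpha,0)$, $T_2=(\bar\alpha,0)$ conjugate over $K(\sqrt c)$, where $\alpha=a+b\sqrt c$; write $d_0=\alpha\bar\alpha\in K$ and $s=\alpha+\bar\alpha\in K$.

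Given $P\in E(F)$ of order $4$, set $T=[2]P$. If $T=T_1$ or $T=T_2$, I replace $P$ by $R=P+\tau_c(P)$ for any $\tau_c\in\Gal(F/K)$ with $\tau_c(\sqrt c)=-\sqrt c$; then $[2]R=T_1+T_2=T_0$, so $R$ is again a point of order $4$, now doubling to the rational point $T_0$. Thus I may assume $[2]P=T_0$. By Lemma \ref{Knapp} (as in the computation of Proposition \ref{2,6}) we have $x(P)=\pm\sqrt{d_0}$ and $y(P)^2=d_0\mu$, where $\mu=2\sqrt{d_0}-s$. Using the twisting correspondence $E^{(d)}(K)\simeq\{Q\in E(K(\sqrt d)):\sigma Q=-Q\}$ (with $\sigma$ the generator of $\Gal(K(\sqrt d)/K)$), the goal becomes to realise $P$ over a single quadratic extension $K(\sqrt d)$ on which $\sigma$ sends $P$ to $-P$.

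The heart of the argument, and the step I expect to be hardest, is to show $d_0$ is a square in $K$. Since $P\in E(F)$ forces $\sqrt\mu\in F$ with $\mu\in K(\sqrt{d_0})$ and norm $\mu\bar\mu=s^2-4d_0=(\alpha-\bar\alpha)^2=4b^2c$, and $c$ is a non-square, the quartic satisfied by $\sqrt\mu$ over $K$ has Galois group $\Z/4$ or $D_4$ whenever $d_0$ and $\mu$ are both non-squares; by the factorisation argument of Lemma \ref{square} (the elementary abelian structure of $\Gal(F/K)$ forbids cyclic quartic subextensions) such a field cannot sit inside $F$. Hence either $d_0$ is a square in $K$, or $\mu$ is a square in $K(\sqrt{d_0})$ and $P\in E(K(\sqrt{d_0}))$. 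In the latter case the conjugate $\sigma(P)$ has $x$-coordinate $-\sqrt{d_0}$, the other root of $x^2=d_0$, so $\sigma(P)\in\{P+T_1,P+T_2\}$ and $\sigma(P)-P\in\{T_1,T_2\}\subset E(K(\sqrt{d_0}))$; this forces $K(\sqrt{d_0})=K(\sqrt c)$, so $\sigma$ swaps $T_1$ and $T_2$, and then $\sigma^2(P)=P+T_1+T_2=P+T_0\neq P$ contradicts $\sigma^2=\mathrm{id}$. Therefore $d_0$ is a square in $K$.

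With $\sqrt{d_0}\in K$ we get $P=(\sqrt{d_0},\sqrt{d_0}\sqrt\mu)\in E(K(\sqrt\mu))$, and the generator of $\Gal(K(\sqrt\mu)/K)$ fixes $\sqrt{d_0}$ while negating $\sqrt\mu$, hence sends $P$ to $-P$; so $P$ descends to a point of order $4$ on $E^{(\mu)}(K)$ (and if $\mu$ is itself a square then $P\in E(K)$ and one takes $d$ a square). The only genuinely delicate inputs are thus the two used in the previous paragraph: the exclusion of cyclic degree-$4$ radicals from $F$, which is exactly the mechanism of Lemmas \ref{square} and \ref{square i} and is special to $\Q(i)$ and $\Q(\sqrt{-3})$, and the use of $\sigma^2=\mathrm{id}$ to close the collapsed case $K(\sqrt{d_0})=K(\sqrt c)$; should a twist produced along the way carry extra $2$-torsion, Theorem \ref{Burt} pins down its torsion.
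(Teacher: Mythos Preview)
Your argument is correct and is precisely the $K$-analogue of Fujita's Lemma~13 that the paper invokes; the paper itself gives no details beyond the citation, so you have in fact supplied what the paper omits. The crucial step---that the quartic minimal polynomial $x^4+2sx^2+4b^2c$ of $\sqrt{\mu}$ over $K$ has Galois group $\Z/4$ or $D_4$ when $d_0$ is a non-square---follows cleanly from the standard biquadratic criterion because the constant term $N_{K(\sqrt{d_0})/K}(\mu)=4b^2c$ is a non-square (as $c$ is), so the extension cannot embed in the elementary abelian $F/K$; your handling of the degenerate case $K(\sqrt{d_0})=K(\sqrt c)$ via $\sigma^2=\mathrm{id}$ is also sound, and the closing reference to Theorem~\ref{Burt} is unnecessary.
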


    \begin{proof} 
    The proof follows the proof of the statement when $K=\Q$ and it is given in         
                          \cite[Lemma~13]{Fujita2}.
       \end{proof}

\section{More Restrictions on the Torsion Subgroups}\label{restrictions2}

\begin{prop}\label{4+20}
                        Let $E$ be an elliptic curve over $K$. Then $E(F)$ cannot  
                                     have a subgroup isomorphic to $\four \oplus \four \oplus \Z/5\Z$.
      \end{prop}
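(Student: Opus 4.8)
The plan is to derive, from the hypothetical inclusion $\four\oplus\four\oplus\Z/5\Z\subseteq E(F)$, either a quadratic twist whose $K$-rational torsion is too large for the classification of torsion over the cyclotomic quadratic fields, or a $K$-rational cyclic $20$-isogeny defined over a quadratic extension, which is excluded by Theorem~\ref{modular curves-Burton} (and, for $K=\Q(i)$, by Proposition~\ref{20}).

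First I would apply Lemma~\ref{Laska-Lorenz} to the odd part: the summand $\Z/5\Z$ of $E(F)_{2'}$ is realized as $E^{(d)}(K)_{2'}$ for a single $d\in\mathcal{O}_K$. Since $E$ and $E^{(d)}$ have isomorphic torsion over $F$, I may replace $E$ by $E^{(d)}$ and assume from now on that $E(K)$ contains a point $P_5$ of order $5$, while $E[4]\subseteq E(F)$ still holds. Because $E[4]\subseteq E(F)$, the field $K(E[4])$ lies in $F$, so the image $G$ of the mod-$4$ representation in $\GL_2(\Z/4\Z)$ is an elementary abelian $2$-group, and its reduction $\bar G\subseteq\GL_2(\F_2)\cong S_3$ is an elementary abelian $2$-subgroup, hence either trivial or of order $2$. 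I would split the argument along these two possibilities.

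If $\bar G$ is trivial, then $E[2]\subseteq E(K)$, so $E(K)\supseteq E[2]\oplus\langle P_5\rangle\simeq\two\oplus\Z/10\Z$; this group appears in neither classification of \cite[Theorem~2]{Najman-cyclotomic}, a contradiction. For $K=\Q(i)$ I expect this case to be the only one that can occur: since $\mu_4\subseteq K$, the Weil pairing forces $\det G=1$, so $G\subseteq\SL_2(\Z/4\Z)$, and a direct check shows that every element of order dividing $2$ in $\SL_2(\Z/4\Z)$ is congruent to the identity modulo $2$, whence $\bar G$ is trivial.

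If $|\bar G|=2$, which can happen only for $K=\Q(\sqrt{-3})$, let $T$ be the unique nonzero $K$-rational point of $E[2]$, so that $\bar G$ is the order-$2$ subgroup of $S_3$ fixing the line spanned by $T$ mod $2$. The crux is to show that the cyclic subgroup $C$ of order $4$ lying above $T$ is $G$-stable. In a basis adapted to $T$, I would first check that any order-$2$ lift in $G$ of the nontrivial element of $\bar G$ is upper triangular, since a lift with nonzero lower-left entry has $M^2\neq I$; and then that any element of $G$ reducing to the identity modulo $2$ must commute with such a lift, which forces its lower-left entry to vanish and hence forces it to preserve $C$. Thus $C$ is a $K$-rational cyclic subgroup of order $4$ on which $G$ acts through $\{\pm1\}$, so $C$ is defined over an extension of degree at most $2$. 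Then $C+\langle P_5\rangle$ is a $K$-rational cyclic subgroup of order $20$ defined over a quadratic extension of $\Q(\sqrt{-3})$, contradicting Theorem~\ref{modular curves-Burton}. The main obstacle is precisely this last group-theoretic step: verifying that the elementary abelianness of $G$ is exactly what is needed to pin the Galois action down to a common invariant order-$4$ subgroup above $T$.
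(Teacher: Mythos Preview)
Your argument is correct but takes a different route from the paper. The paper splits according to whether $E(K)_{\text{tors}}$ is cyclic or contains $E[2]$: in the cyclic case it invokes Proposition~\ref{4} to pass to a twist with a $K$-rational point of order $4$, and then obtains a $K$-rational cyclic subgroup of order $20$; in the non-cyclic case it falls back on the detailed classification of \S\ref{full torsion}. You instead twist first so that $E(K)$ has a point of order $5$ (via Lemma~\ref{Laska-Lorenz}) and then analyze the mod-$4$ image $G\subseteq\GL_2(\Z/4\Z)$ directly. Your observation that for $K=\Q(i)$ the Weil pairing forces $G\subseteq\SL_2(\Z/4\Z)$ and that every involution there reduces to the identity mod $2$ is correct; and in the $|\bar G|=2$ case over $\Q(\sqrt{-3})$, your commutator computation (the top-left entry of $MN-NM$ is $2\gamma(1+2b)\equiv 2\gamma\pmod 4$, forcing the lower-left entry of every $N\equiv I\pmod 2$ to vanish) indeed yields a $K$-rational cyclic subgroup of order $4$, hence of order $20$, contradicting Remark~\ref{explain Burton}. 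The payoff of your approach is that it is essentially self-contained---it needs only Lemma~\ref{Laska-Lorenz}, \cite[Theorem~2]{Najman-cyclotomic}, and the emptiness of $Y_0(20)(\Q(\sqrt{-3}))$, and avoids both Proposition~\ref{4} and the case-by-case work of \S\ref{full torsion}; the paper's version is shorter on the page because that machinery has already been established. One small cosmetic point: the parenthetical citation of Proposition~\ref{20} in your opening plan is never actually used, since for $K=\Q(i)$ your contradiction comes from Najman's list rather than from $X_0(20)$.
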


     \begin{proof}
           Suppose that $\four \oplus \four \oplus \Z/5\Z \subset E(F)$. Assume that $E(K)$ is cyclic. Then 
           by Proposition \ref{4}, $E^{(d)}(K)$ has a point of order $4$ for some $d\in K$. Since $E$ and 
           $E^{(d)}$ 
           are quadratic twists, $E(F) \simeq E^{(d)}(F)$ and hence $E^{(d)}(F)$ has a Galois invariant 
           subgroup 
           of order $20$ which is not possible by Proposition \ref{modular curves}. Similarly, the result 
           holds in the case where $E(K)$ contains the full $2$-torsion by our results in \S\ref{full 
           torsion}.
     \end{proof}

      \begin{prop}\label{12} 
                      Let $E$ be an elliptic curve over $K$. Then $E(F)$ cannot have a subgroup 
                       isomorphic to $\Z/12\Z \oplus \Z/12\Z$.
      \end{prop}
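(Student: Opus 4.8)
The plan is to use the decomposition $\Z/12\Z\oplus\Z/12\Z\simeq(\four\oplus\four)\oplus(\Z/3\Z\oplus\Z/3\Z)$, so that a subgroup isomorphic to $\Z/12\Z\oplus\Z/12\Z$ in $E(F)$ forces simultaneously $E[4]\subset E(F)$ and $E[3]\subset E(F)$, and to derive a contradiction by splitting on the $2$-torsion of $E$ over $K$. If $f$ is irreducible over $K$ it stays irreducible over $F$, so $E$ has no $2$-torsion in $F$ and $\four\oplus\four\not\subset E(F)$; hence $E(K)$ must have a point of order $2$. If $E(K)$ contains the full $2$-torsion, then $E(K)_{\text{tors}}$ is one of the non-cyclic groups treated in \S\ref{full torsion}, and Propositions \ref{2,8}, \ref{2,6}, \ref{2,4}, \ref{4,4} and Theorem \ref{2,2} (together with Theorem \ref{Burt} and Lemma \ref{Laska-Lorenz} to control the odd part) compute $E(F)_{\text{tors}}$ in every case; none of the resulting groups is $\Z/12\Z\oplus\Z/12\Z$, so this case is closed.

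This leaves the case in which $E(K)$ has \emph{exactly one} rational point of order $2$. For $K=\Q(i)$ I would finish immediately, since Proposition \ref{cyclic}(1) gives $\four\oplus\four\not\subset E(F)$ in precisely this situation, contradicting $E[4]\subset E(F)$. The genuine work is therefore over $K=\Q(\sqrt{-3})$, where Proposition \ref{cyclic} only rules out $\eight\oplus\eight$ and so permits $\four\oplus\four$.

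Over $\Q(\sqrt{-3})$ I would first locate the odd part. By Lemma \ref{Laska-Lorenz}, $E(F)_{2'}\simeq\bigoplus_i E^{(d_i)}(K)_{2'}\simeq\Z/3\Z\oplus\Z/3\Z$, and since quadratic twisting only rescales the roots of $f$, each twist $E^{(d_i)}$ again has exactly one rational $2$-torsion point. The key point is the mod-$3$ argument already used in Proposition \ref{odd}: any twist carrying a $K$-rational point of order $3$ has its image in $\GL_2(\F_3)$ simultaneously upper triangular (it fixes that point), of determinant $1$ (as $\zeta_3\in K$), and contained in an elementary abelian $2$-group (since $E^{(d_i)}(F)\simeq E(F)$ already contains $E[3]$, so $K(E^{(d_i)}[3])\subset F$); such an image is trivial, so that twist has \emph{full} $K$-rational $3$-torsion. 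Thus $E(F)_{2'}$ cannot arise as two independent $\Z/3\Z$'s from distinct twists, and there is a single twist $E':=E^{(d_1)}$ with $E'[3]\subset E'(K)$; adding its rational $2$-torsion point forces $E'(K)_{\text{tors}}\simeq\Z/3\Z\oplus\six$ by the Kenku--Momose list. As $E'(F)\simeq E(F)$, the whole proposition over $\Q(\sqrt{-3})$ reduces to the single statement: a curve with $E(K)_{\text{tors}}\simeq\Z/3\Z\oplus\six$ cannot satisfy $\four\oplus\four\subset E(F)$.

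To settle that statement I would take the model $y^2=x(x-\alpha)(x-\bar\alpha)$ of Proposition \ref{cyclic}, where $(0,0)$ is the rational $2$-torsion point and $\alpha-\bar\alpha=2b\sqrt{c}$. Exactly as in the proof of Proposition \ref{cyclic}, $\four\oplus\four\subset E(F)$ requires $(\alpha,0)\in[2]E(F)$, hence $\sqrt{c}$ to be a square in $F$, which by Lemma \ref{square} forces $-c\in(K^{*})^2$, i.e. $\alpha\in K(i)$. The remaining, and genuinely computational, step is to show this is incompatible with $E[3]\subset E(K)$: using the explicit parametrization of elliptic curves over $\Q(\sqrt{-3})$ with full rational $3$-torsion together with a rational $2$-torsion point, I would read off the square-free part $c$ attached to the non-rational $2$-torsion and verify that $-c$ is never a square in $K$ — equivalently, that this $2$-torsion never generates $K(i)$. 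I expect this to be the main obstacle: unlike the preceding cases it is not a formal consequence of the torsion classification but the analogue over $\Q(\sqrt{-3})$ of the Diophantine inputs of Lemma \ref{diophantine}, and after clearing the $3$-torsion conditions it should reduce to showing that a certain curve has no $K$-points with $\alpha\in K(i)$, to be handled by a descent or Magma computation in the style of \S\ref{odd section}.
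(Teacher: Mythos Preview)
Your reductions, and your handling of the full $2$-torsion case and of $K=\Q(i)$, match the paper. The divergence is over $K=\Q(\sqrt{-3})$: you pass to the twist with $E(K)_{\text{tors}}\simeq\Z/3\Z\oplus\Z/6\Z$ and then leave as ``the main obstacle'' the claim that the non-rational $2$-torsion of such a curve cannot lie in $K(i)$. That step is genuinely unfinished---you have not verified it, and it is not a formal consequence of anything already established in the paper---so as it stands the proposal has a gap precisely at the point you flag.

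The paper sidesteps this computation entirely, and the trick is to twist toward the $4$-torsion rather than the $3$-torsion. Using Proposition~\ref{4}, one replaces $E$ by a twist carrying a $K$-rational point $P$ of order~$4$. On that twist, Lemma~\ref{Laska-Lorenz} still supplies two independent $K$-rational subgroups of order~$3$ (coming from the $\Z/3\Z\oplus\Z/3\Z$ inside $E(F)$); quotienting by one of them, say $C$, gives $\phi:E\to E':=E/C$, and by \cite[Lemma~7]{Najman-cubic} the presence of the second $K$-rational $3$-cycle forces $E'$ to carry a $K$-rational cyclic subgroup of order~$9$. Since $\phi(P)\in E'(K)$ still has order~$4$, one obtains a $K$-rational cyclic subgroup of $E'$ of order~$36$, contradicting Proposition~\ref{modular curves} (which shows $X_0(36)(K)$ is entirely cuspidal).

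So the missing idea is the isogeny passage to $X_0(36)$: it converts your open Diophantine step into an appeal to a modular-curve computation already done in the paper, and no parametrization of curves with full $3$-torsion or further descent is needed.
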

      
       \begin{proof}
             If $E(K)$ contains $\two \oplus \two$, then $E(F)$ cannot have a subgroup isomorphic to $\Z/
             3\Z \oplus \Z/3\Z$ by the results of \S\ref{full torsion}. Hence we may assume that $E(K)
             $ is 
             cyclic.
             
             If $K=\Q(i)$, then $E[4]$ is not contained in 
             $E(F)$ by Lemma \ref{cyclic}. 
             
             Let $K=\Q(\sqrt{-3})$. We may assume that $E$ has a point $P$ of order $4$ by Lemma \ref{4} (replacing $E$ by a twist if necessary) and a $K$-rational subgroup $C$ of order $3$ by Lemma \ref{Laska-Lorenz}. Let 
             $\phi: E \to E':=E/C$, then $E'$ has 
             a cyclic isogeny of order $9$ defined over $K$ by \cite[Lemma 7]{Najman-cubic} since $E$ 
             has an additional $K$-rational $3$-cycle. The image of $P$, $\phi(P)$ is in $E'(K)$ and it is 
             of order $4$ since the order of $\ker(\phi)$ is relatively prime to $4$. Then $E'$ has a 
             $K$-rational subgroup of order $36$ which is not possible by Proposition \ref{modular curves}. 
             Hence, there is no such curve over the fields $K$.
       \end{proof}

       \begin{prop} \label{4,32}
                Let $E$ be an elliptic curve over $K$. Then $E(F)$ cannot have a subgroup isomorphic to $  
                \four \oplus \thirtytwo$. 
       \end{prop}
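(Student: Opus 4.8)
The plan is to assume $\four \oplus \thirtytwo \subseteq E(F)$ and derive a contradiction by manufacturing a point of order $32$ in a situation where none can exist. Note first that $\four\oplus\thirtytwo$ has full $4$-torsion, so $E[4]\subseteq E(F)$; in particular $E(F)$ has a point of order $2$, whence $E(K)$ itself has a point of order $2$ (otherwise $E(F)$ could have none). I would then cut down the cases using Najman's classification of $E(K)_{\text{tors}}$ (\cite[Theorem~2]{Najman-cyclotomic}) together with the results of \S\ref{full torsion}. If the $2$-part of $E(K)_{\text{tors}}$ is $\four\oplus\four$, then $E(F)\simeq\eight\oplus\eight$ by Proposition~\ref{4,4}, which contains no point of order $32$; if it is $\two\oplus\four$, the proof of Proposition~\ref{2,4} shows $E(F)$ has no point of order $16$, hence none of order $32$; and the configuration $\two\oplus\two$ is reduced to the remaining cases by Theorem~\ref{2,2}. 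When $E(K)$ is cyclic and $K=\Q(i)$, Proposition~\ref{cyclic} gives $\four\oplus\four\not\subset E(F)$, which already contradicts $\four\oplus\four\subset\four\oplus\thirtytwo$. This isolates two essential cases: $E(K)_{\text{tors}}\simeq\two\oplus\eight$ over either field (after replacing $E$ by a quadratic twist, which preserves $E(F)_{\text{tors}}$ up to isomorphism), and $E(K)$ cyclic over $\Q(\sqrt{-3})$.

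In the case $E(K)_{\text{tors}}\simeq\two\oplus\eight$ I would use Theorem~\ref{Ono} to put $E$ in the form $y^2=x(x+u^4)(x+v^4)$ with $u^2+v^2=w^2$, and recall from Proposition~\ref{2,8} that $\four\oplus\sixteen\subseteq E(F)$ while $\eight\oplus\eight\not\subseteq E(F)$. It then suffices to show there is no point $P_5\in E(F)$ with $[2]P_5$ equal to the order-$16$ point already present. Applying Lemma~\ref{Knapp} one further level up the $2$-power tower forces a specific expression in $u,v,w$ to be a square in $F$; over $\Q(i)$ Lemma~\ref{square} descends this to $\pm(\text{that expression})$ being a square in $K$, producing a nontrivial $K$-solution of $x^4-y^4=z^2$ and contradicting Lemma~\ref{diophantine}(\ref{difference}). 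Over $\Q(\sqrt{-3})$, where $x^4+y^4=z^2$ may have nontrivial solutions, I would instead route the square condition through Lemma~\ref{square i}, so that the obstruction becomes the impossibility of $\sqrt{ai}$ being a square in $F$. For the cyclic case over $\Q(\sqrt{-3})$ I would take the model $y^2=x(x-\alpha)(x-\bar\alpha)$ of Proposition~\ref{cyclic} and extend that descent by one step, again landing on Lemma~\ref{square i}. Since ruling out a point of order $32$ simultaneously rules out a point of order $64$, this also disposes of the possibility $\four\oplus\Z/64\Z$ left open by Proposition~\ref{2,8} over $\Q(\sqrt{-3})$.

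The main obstacle is precisely this explicit one-step $2$-descent from the order-$16$ point to a putative order-$32$ point: one must compute via Lemma~\ref{Knapp} the exact $x$-coordinates at this level, keep track of the choices of square roots and of the relation $u^2+v^2=w^2$, and isolate exactly which element of $K$ (or of $K(i)$) is forced to be a square, so that the contradiction lands on a result that is actually available — Lemma~\ref{diophantine}(\ref{difference}), valid over both fields, or Lemma~\ref{square i}, tailored to $\Q(\sqrt{-3})$. A secondary subtlety is to verify that permuting the roles of the three $2$-torsion points through the isomorphisms $E(a,b)\simeq E(-a,b-a)\simeq E(-b,a-b)$ does not open an alternative branch; once these bookkeeping issues are settled, the argument follows the pattern already established in Propositions~\ref{2,8}, \ref{4,4}, and \ref{2,4}.
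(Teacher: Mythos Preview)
Your approach is genuinely different from the paper's, and the case reduction has a gap.

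\medskip

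\textbf{The paper's method.} After reducing (via Propositions~\ref{2,6}, \ref{2,2} and Lemma~\ref{4}) to the situation where $E(K)$ has a point $P_2$ of order~$4$, the paper does \emph{not} push the Knapp descent one level deeper. Instead it studies the $2$-adic Galois representation. Choosing generators of $T_2(E)$ with $x\equiv P_2\pmod 4$, the image of $\Gal(\bar F/F)$ lies in $\bigl(\begin{smallmatrix}1+4\as&4\cs\\ 4\bs&1+4\ds\end{smallmatrix}\bigr)$, and $\zeta_8\in F$ forces $\as+\ds\equiv 0\pmod 2$. The crucial trick is to pass to $E'=E/\langle[2]P_2\rangle$, so that $E'(K)\supset E'[2]$ and hence $E'[4]\subset E'(F)$; the induced representation on $T_2(E')$ then forces $2\mid\bs$. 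The assumed order-$32$ point in $E(F)$ gives an order-$16$ point in $E'(F)$, and a short parity analysis of the matrix entries yields $2\mid\as,\ds$ for all $\sigma$. Combining this with Proposition~\ref{cyclic} and \S\ref{full torsion}, one is forced into $E[8]\subset E(F)$, which is already excluded. No nested square roots are ever computed.

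\medskip

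\textbf{Your approach and its gap.} Your plan is to carry the explicit Knapp descent from the order-$16$ point to a putative order-$32$ point. Even granting that this is feasible, your case reduction breaks at the $\two\oplus\four$ step: Proposition~\ref{2,4} does \emph{not} show ``$E(F)$ has no point of order~$16$'' in general. It only proves $\eight\oplus\eight\not\subset E(F)$; the ``no order-$16$ point'' conclusion in its proof is established solely in the special subcase $K=\Q(\sqrt{-3})$ with $E^{(-1)}(K)$ containing a point of order~$4$. So when $E(K)_{\text{tors}}\simeq\two\oplus\four$ outside that subcase, you have not excluded $\four\oplus\thirtytwo$ and cannot pass directly to the $\two\oplus\eight$ or cyclic cases. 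You would need an additional explicit computation here (or a twisting argument), which you have not supplied.

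Moreover, the ``one step deeper'' computation you propose is not just bookkeeping. For $E(K)_{\text{tors}}\simeq\two\oplus\eight$ the order-$16$ points already sit in a quadratic extension of $K$, so the quantities $x(P_4),\,x(P_4)+u^4,\,x(P_4)+v^4$ are nested square-root expressions; applying Lemma~\ref{square} to them is exactly what gets delicate, and you would also need (as in Proposition~\ref{4,4}) to rule out \emph{every} order-$16$ point, not just one. The paper's Tate-module/isogeny argument sidesteps all of this and treats both fields at once.
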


       \begin{proof}

               Suppose that $E$ is an elliptic curve defined over $K$ with $\four \oplus \thirtytwo \subset    
               E(F)$. Then Proposition \ref{2,6}, Proposition \ref{2,2}, and Lemma \ref{4} implies that we 
               may assume that $E(K)$ has a point of order $4$. Let $P_2$ denote such a point in $E(K)$. 
               
               We pick generators  $x,y$ for the $2$-adic Tate module $T_2(E)$ 
               of $E$ such that 
                                         \[ 
                                         x \equiv P_2  \pmod{4}. 
                                         \] 
               Notice that $E[4]\subset E(F)$.
               Then the $2$-adic representation of the group $\Gal(\bar{F}/F)$ is given as follows:
                            \[ 
                            \Gal(\bar{F}/F) \rightarrow \text{Aut}(T_2(E)) 
                             \]
                            \begin{equation}      \label{E}
                                        \rho_2 : \sigma \mapsto  
                                           \left( \begin{array}{cc}
                                                    1+4a_{\sigma} & 4c_{\sigma}  \\
                                                      4b_{\sigma} & 1+4d_{\sigma}  \\
                                                   \end{array} \right) 
                                     \end{equation} 
                  for $\as, \bs, \cs$ and $\ds$ in $\Z_2$.
              Note that $F$ contains a primitive $8$th root of unity. Hence
                             \[ 
                             \det(\rho_2(\sigma)) \equiv 1 \pmod{8} 
                              \] 
                since $\zeta_8^{\det(\rho_2(\sigma))}=\sigma(\zeta_8)=\zeta_8$.
                Computing the determinant, we obtain that $a_{\sigma}+d_{\sigma} \equiv 0 \pmod{2}$. Let       
                $E'=E/{\langle P_1 \rangle}$ where $P_1=[2]P_2$ and $\phi$ be the morphism  $E  
                \rightarrow E'$. We will choose the generators of $T_2(E')$ as $x'$ and $y'$ where $2x'=
                \phi(x)$ and $y'=\phi(y)$. Hence, we find the $2$-adic representation of $\Gal(\bar{F}/F)$ on  
                $T_2(E')$ as:
                          \begin{equation} \label{E'}
                                  \rho_2': \sigma \mapsto 
                                    \left( \begin{array}{cc}
                                            1+4a_{\sigma} & 8c_{\sigma}  \\
                                            2b_{\sigma} & 1+4d_{\sigma}  \\
                                    \end{array} \right)  
                           \end{equation}
         Since $E(K)$ has a point of order $4$, $E'(K)$ contains full $2$-torsion. (see 
          \cite[Example 4.5]{Silverman}). Hence by Lemma \ref{Knapp}, the full $4$-torsion $E'[4]$ is 
          contained in $E'(F)$. Hence the representation 
          in (\ref{E'}) tells us that $\bs$ is divisible by $2$ for every $\sigma \in \Gal(\bar{F}/F)$. Let $\bs=2 
          b_{\sigma}'$. 
          
          Also notice that $E'(F)$ must have a point of order $16$ since $E(F)$ has a point of order $32$. 
          Let $kx'+ly' \pmod{16}$ be such a point for some $k,l \in \Z$ and at least one of $k,l$ is not 
          divisible by $2$. Since this point is in $E(F)$, it is fixed under the action of $\Gal(\bar{F}/F)$. 
           Then we obtain from the representation in (\ref{E'}) that
                   \begin{align}  \label{16}  
                           (1+4\as)k+8\cs l \equiv k \pmod{16} \\
                       \label{16'}     (4\bs')k+(1+4\ds)l \equiv l \pmod{16} 
                     \end{align}
 
            Assume $a_{\sigma}$ is a unit in $\Z_2$ for some $\sigma$, then so is $d_{\sigma}$ since $\as 
            + \ds \equiv 0 \pmod{2}$. An easy computation shows that $k$ and $l$ are both congruent to $0$ modulo $2$ which is a contradiction. 
 
          Hence $\as \equiv 0 \pmod{2}$ for all $\sigma \in \Gal(\bar{F}/F)$ and so is $\ds$. Proposition 
          \ref{cyclic} together with the results of  \S\ref{full torsion} imply that either $E(K)\simeq \four 
          \oplus \four$ or $E(F)$ does not contain $E[8]$. If $E(K)\simeq \four \oplus \four$, then we 
          showed in Proposition \ref{4,4} that it can not have a point of order $16$.
 
          Hence, we may assume that $E(F)$ does not contain $E[8]$ and it implies that $b'_{\sigma}$ is 
          not divisible by $2$ for some $\sigma_1$. See the representation in (\ref{E'}). 
          
          Once again using (\ref{16}) and (\ref{16'}), we compute that
          $\cs \equiv 0 \pmod{2}$ for all $\sigma$. Then the representation of $T_2(E)$ in (\ref{E}) implies 
          that $E[8]$ is contained in $E(F)$ and we get a contradiction.
          See \cite{Fujita2} for the case $K=\Q$.
    \end{proof}

      \begin{corollary}
             Let $E: y^2=x(x+a)(x+b)$ be an elliptic curve defined over $K$. Assume that         
             $E(K)_{\text{tors}}\simeq \two \oplus \eight$. Then 
                            \[ E(F)_{\text{tors}} \simeq \four \oplus \sixteen. \]
      \end{corollary}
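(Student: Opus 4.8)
The plan is to derive this corollary directly from the two results already proved for this torsion configuration, with essentially no new computation. First I would invoke Proposition \ref{2,8}, which constrains $E(F)_{\text{tors}}$ to a short list: when $K=\Q(i)$ it is either $\four\oplus\sixteen$ or $\four\oplus\thirtytwo$, and when $K=\Q(\sqrt{-3})$ there is the one additional possibility $\four\oplus\Z/64\Z$. So the whole problem reduces to eliminating the two largest candidates and identifying the survivor.

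Next I would apply Proposition \ref{4,32}, which asserts that $E(F)$ cannot contain a subgroup isomorphic to $\four\oplus\thirtytwo$. This instantly rules out the possibility $E(F)_{\text{tors}}\simeq\four\oplus\thirtytwo$. The only point needing a short remark is the $K=\Q(\sqrt{-3})$ case: the group $\four\oplus\Z/64\Z$ contains $\four\oplus\thirtytwo$ as a subgroup, namely the full $\four$-factor together with the unique index-$2$ subgroup of the cyclic factor $\Z/64\Z$. Hence Proposition \ref{4,32} forbids $\four\oplus\Z/64\Z$ as well.

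Having excluded both $\four\oplus\thirtytwo$ and $\four\oplus\Z/64\Z$, the only remaining candidate from Proposition \ref{2,8} is $\four\oplus\sixteen$, which is exactly the asserted isomorphism. I do not expect a genuine obstacle, since all the hard work lives in the two cited propositions; the one thing to be careful about is making the containment $\four\oplus\thirtytwo\subset\four\oplus\Z/64\Z$ explicit, so that Proposition \ref{4,32} can be cited to discard the $\Z/64\Z$ case rather than re-proving a nonexistence statement from scratch.
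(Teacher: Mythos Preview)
Your proposal is correct and matches the paper's own proof, which simply cites Proposition~\ref{2,8} and Proposition~\ref{4,32}. Your explicit remark that $\four\oplus\Z/64\Z$ contains $\four\oplus\thirtytwo$ is a welcome clarification of a step the paper leaves implicit.
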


   \begin{proof} The statement follows from the Proposition \ref{2,8} and Proposition \ref{4,32}.
   \end{proof}

   \begin{prop}\label{8,12}
         Let $E/K$ be an elliptic curve. Then $E(F)$ cannot be isomorphic to $\four \oplus \eight \oplus \Z/ 
         3\Z$.
   \end{prop}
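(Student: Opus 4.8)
The plan is to assume $E(F)_{\text{tors}}\simeq\four\oplus\eight\oplus\Z/3\Z$ and derive a contradiction, exploiting that this group contains \emph{both} a point of order $8$ and a point of order $3$. First I would dispose of the case where $E(K)$ contains the full $2$-torsion: then $E(F)_{\text{tors}}$ is one of the groups classified in \S\ref{full torsion}, and inspection of Propositions \ref{2,8}--\ref{2,2} shows that none of them has simultaneously a point of order $8$ and a point of order $3$ (the only ones with a point of order $3$ are of the form $\four\oplus\Z/12\Z$, whose $2$-part is $\four\oplus\four$). Hence $E(K)$ is cyclic. Since $\four\subseteq E(F)$, Lemma \ref{4} lets me replace $E$ by a quadratic twist so that $E(K)$ has a point $P$ of order $4$; this twist again has cyclic $E(K)$ (otherwise the previous step applies) and still carries a $K$-rational cyclic subgroup $C$ of order $3$, namely $E(F)_{2'}$, which is $K$-rational by Lemma \ref{Laska-Lorenz}.

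Next I would set $T_0=[2]P$, the unique $2$-torsion point of $E(K)$, and pass to the $2$-isogenous curve $E''=E/\langle T_0\rangle$ via $\psi\colon E\to E''$ over $K$. As in the proofs of Propositions \ref{4,32} and \ref{can not happen}, $E''(K)$ contains the full $2$-torsion, and $\psi(C)$ is a $K$-rational cyclic $3$-subgroup, so $E''(F)$ has a point of order $3$. The image $\psi(E(F)_{(2)})$ equals $E(F)_{(2)}/\langle T_0\rangle$. If $T_0$ is \emph{not} the $2$-torsion point lying in the order-$8$ summand, this quotient is isomorphic to $\two\oplus\eight$, so $E''(F)$ also has a point of order $8$; but then $E''$ has full $2$-torsion over $K$ together with points of order $8$ and $3$, which is impossible by \S\ref{full torsion}. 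This settles every configuration except the one in which $T_0$ generates the $2$-torsion of the order-$8$ subgroup.

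For that remaining case the plan is to produce a $K$-rational cyclic subgroup of order $8$ directly. Choosing a basis $e_1,e_2$ of $E[8]$ with $e_2\in E(F)$ of order $8$ and $E(F)_{(2)}=\langle 2e_1,e_2\rangle$, I would analyze the image $G$ of $\bar\rho_8\colon\Gal(\bar K/K)\to\GL_2(\Z/8)$. Because $\mu_8\subset F$ for both fields (indeed $\zeta_8\in\Q(i,\sqrt2)\subseteq F$), the subgroup $\Gal(\bar K/F)$ has determinant $\equiv 1\pmod 8$ and fixes $E(F)_{(2)}$, which pins its image to $\{I,\left(\begin{smallmatrix}1&0\\4&1\end{smallmatrix}\right)\}$. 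Since $\Gal(F/K)$ is an elementary abelian $2$-group, $g^2$ lies in this image for every $g\in G$; comparing the top-left entry of $g^2$ with $1$, and using that each $\alpha_g$ is odd, yields $\beta_g\gamma_g\equiv0\pmod8$, where here $\gamma_g\in\{0,4\}$ as a consequence of $P\in E(K)$. The assignments $g\mapsto\beta_g\bmod 2$ and $g\mapsto\gamma_g/4\bmod 2$ are homomorphisms $G\to\Z/2$ which by this relation are never simultaneously nonzero; as a subgroup of $\Z/2\oplus\Z/2$ containing both $(1,0)$ and $(0,1)$ must contain $(1,1)$, at least one of them is trivial. The first cannot be trivial (that would force full $2$-torsion in $E(K)$, contradicting cyclicity), so the second is trivial, meaning every $g$ fixes $\langle e_2\rangle$. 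Thus $\langle e_2\rangle$ is a $K$-rational cyclic $8$-subgroup, and $\langle e_2\rangle\oplus C$ is a $K$-rational cyclic subgroup of order $24$; this contradicts Theorem \ref{modular curves-Burton} together with Remark \ref{explain Burton}, which state that $X_0(24)(K)$ has no non-cuspidal points.

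The main obstacle is exactly this last case. The $2$-part $\four\oplus\eight$ is itself an admissible torsion group, so the contradiction must arise from its interaction with the order-$3$ subgroup, and when the rational $2$-torsion hides inside the order-$8$ direction the clean quotient argument breaks down. The representation-theoretic step is where the hypotheses $\mu_8\subset F$ and $\Gal(F/K)$ elementary abelian are essential, and it is what allows the argument to run uniformly for both $\Q(i)$ and $\Q(\sqrt{-3})$.
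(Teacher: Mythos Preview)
Your argument is correct, but it takes a genuinely different path from the paper's.

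The paper proceeds exactly as in Proposition~\ref{4,32}: after replacing $E$ by a twist with a $K$-rational point $P_2$ of order~$4$, it passes to $E'=E/\langle[2]P_2\rangle$, which has full $2$-torsion over $K$ and a point of order~$3$ in $E'(F)$; the results of \S\ref{full torsion} then force $E'(F)_{\text{tors}}\simeq\four\oplus\Z/12\Z$, so $E'(F)$ has \emph{no} point of order~$8$. Feeding this into the $2$-adic representation of $\Gal(\bar F/F)$ on $T_2(E)$ and $T_2(E')$ (the matrices of Proposition~\ref{4,32}) gives $b_\sigma$ even, then rules out ``all $a_\sigma$ even'' (that would produce an order-$8$ point in $E'(F)$), and finally the analogue of the congruences~(\ref{16}),~(\ref{16'}) taken modulo~$8$ for an order-$8$ point of $E(F)$ yields the contradiction. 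No case split on the position of $T_0$ inside $E(F)_{(2)}$ is needed.

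Your proof reaches the same reduction to $E'=E''=E/\langle T_0\rangle$, but then branches. Your ``easy'' case is exactly the observation that when $T_0$ does not lie in the $\Z/8\Z$-summand the image $\psi(E(F)_{(2)})$ already contains a point of order~$8$, so \S\ref{full torsion} applied to $E''$ finishes at once. Your ``hard'' case is where the approaches really diverge: instead of working with $\Gal(\bar F/F)$ you analyse the mod-$8$ image of $\Gal(\bar K/K)$, extract two $\Z/2\Z$-valued characters $g\mapsto\beta_g\bmod 2$ and $g\mapsto\gamma_g/4\bmod 2$, show they cannot both be nontrivial, and conclude that $\langle e_2\rangle$ is a $K$-rational cyclic $8$-subgroup. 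Combining with the $K$-rational $3$-subgroup $C$ gives a $K$-rational cyclic $24$-subgroup, contradicting Remark~\ref{explain Burton} on $X_0(24)(K)$.

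So the paper's route is shorter and uniform, recycling the machinery of Proposition~\ref{4,32} and using only \S\ref{full torsion} as the external input; your route is more explicit about where the obstruction lives and brings in $X_0(24)$ as the final contradiction, which is a nice alternative endpoint. Both are valid and work uniformly for $K=\Q(i)$ and $K=\Q(\sqrt{-3})$.
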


  \begin{proof}
       We will proceed the same way as in Proposition \ref{4,32}. The representation of $\Gal(\bar{F}/F)$     
       on $T_2(E)$ and $T_2(E')$ is same as given in the proof of  Proposition \ref{4,32}.
       Since $E'(F)$ contains full $2$ torsion and also a point of order $3$, $E'(F) \simeq \four \oplus \Z/ 
       12\Z$ by Proposition \ref{2,6}. Hence $\bs \equiv 0 \pmod{2}$ for all $\sigma$. If $\as \equiv 0  
       \pmod{2}$ for all $\sigma$, then so is $\ds$ and $y' \pmod{8}$ is stabilized under the action of $
       \Gal(\bar{F}/F)$. However, $E'(F)$ does not have a point of order $8$. Hence $\as$ is not divisible 
       by $2$ for some $\sigma_1$. Then similar to Proposition \ref{4,32}, we obtain congruences as in (\ref{16}) and 
       (\ref{16'}) (replacing modulo $16$ by $8$), we get a contradiction.
   \end{proof}

\section{Main Result}

 \begin{theorem} \label{main}
       Let $K$ be a quadratic cyclotomic field, let $E$ be an elliptic curve over $K$, and let $F$ be the 
       maximal elementary abelian $2$-extensions of $K$.
              \begin{enumerate}
                  \item If $K=\Q(i)$, then
                        $E(F)_{\text{tors}}$ is isomorphic to one of the following groups:
                       \begin{align*}
                          & \Z/{2}\Z \oplus \Z/{2N}\Z & &    ( N=2,3,4,5,6,8) & &\\
                          & \Z/{4}\Z \oplus \Z/{4N}\Z & &    ( N=2,3,4) & &\\
                          &\Z/N\Z \oplus \Z/N\Z && (N=2,3,4,6,8) & & 
                       \end{align*}
                 \noindent    or $\{1\}, \Z/3\Z$, $ \Z/5\Z$, $ \Z/7\Z$, $ \Z/9\Z$, $ \Z/15\Z$.                                  
                  \item If $K=\Q(\sqrt{-3})$, then $E(F)$ is either isomorphic to one of the groups listed above 
                  or 
                              \[ 
                              \Z/2\Z \oplus \Z/32\Z. 
                              \]

              \end{enumerate}
          \end{theorem}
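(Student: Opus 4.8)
The plan is to begin from the preliminary classification of Theorem~\ref{Laska} and to delete precisely those groups which occur in that list but are absent from the statement above; since every surviving group already appears in Theorem~\ref{Laska}, nothing further needs to be verified. Rewriting each entry in invariant-factor form and comparing the two lists, the superfluous groups for $K=\Q(i)$ fall into two families: four sporadic groups, namely $\four\oplus\thirtytwo$, $\four\oplus\eight\oplus\Z/3\Z$, $\Z/12\Z\oplus\Z/12\Z$ and $\four\oplus\four\oplus\Z/5\Z$; and every group that properly contains $\eight\oplus\eight$ (for instance $\Z/8\Z\oplus\Z/16\Z$, $\Z/8\Z\oplus\Z/32\Z$, $\Z/8\Z\oplus\Z/64\Z$, $\eight\oplus\eight\oplus\Z/3\Z$, $\Z/24\Z\oplus\Z/24\Z$ and $\eight\oplus\eight\oplus\Z/5\Z$). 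For $K=\Q(\sqrt{-3})$ there is in addition $\four\oplus\Z/64\Z$.

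The four sporadic groups are dispatched at once: Proposition~\ref{4,32} excludes $\four\oplus\thirtytwo$, Proposition~\ref{8,12} excludes $\four\oplus\eight\oplus\Z/3\Z$, Proposition~\ref{12} excludes $\Z/12\Z\oplus\Z/12\Z$, and Proposition~\ref{4+20} excludes $\four\oplus\four\oplus\Z/5\Z$. The remaining group $\four\oplus\Z/64\Z$ over $\Q(\sqrt{-3})$ contains a subgroup isomorphic to $\four\oplus\thirtytwo$, so it too falls to Proposition~\ref{4,32}.

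For the groups properly containing $\eight\oplus\eight$ I would prove the single statement: if $\eight\oplus\eight\subseteq E(F)$, then $E(F)_{\text{tors}}\simeq\eight\oplus\eight$. Granting this, no group strictly larger than $\eight\oplus\eight$ can be realized, which removes the entire second family uniformly. To establish the statement, note that $\eight\oplus\eight$ carries the full $2$-torsion, so $E[2]\subseteq E(F)$; the defining cubic of $E$ is then reducible over $F$, hence (by the elementary remarks on $y^2=f(x)$ made earlier) reducible over $K$, so $E(K)$ contains a point of order $2$. Were $E(K)$ cyclic, Proposition~\ref{cyclic} would give $\four\oplus\four\not\subseteq E(F)$ for $K=\Q(i)$ and $\eight\oplus\eight\not\subseteq E(F)$ for $K=\Q(\sqrt{-3})$, each contradicting $\eight\oplus\eight\subseteq E(F)$. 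Hence $E(K)$ has full $2$-torsion, so by \cite[Theorem~2]{Najman-cyclotomic} it is one of $\two\oplus\two$, $\two\oplus\four$, $\two\oplus\six$, $\two\oplus\eight$, or (only for $\Q(i)$) $\four\oplus\four$. Reading off the determinations of \S\ref{full torsion} — Proposition~\ref{2,8} with the Corollary following Proposition~\ref{4,32} gives $\four\oplus\sixteen$; Proposition~\ref{2,6} gives $\four\oplus\Z/12\Z$; Proposition~\ref{2,4} gives $\four\oplus\eight$ or $\eight\oplus\eight$; Proposition~\ref{4,4} gives $\eight\oplus\eight$; and Theorem~\ref{2,2} reduces the $\two\oplus\two$ case to these — the only outcome that contains $\eight\oplus\eight$ is $\eight\oplus\eight$ itself, which proves the statement.

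The crux, and the main obstacle, is exactly this last claim. It rests on two facts that must be firmly in place: that Proposition~\ref{cyclic} forbids a copy of $\eight\oplus\eight$ (resp.\ $\four\oplus\four$) over a cyclic base, so that no curve with a single rational $2$-torsion point can build $\eight\oplus\eight$ inside $F$; and that the case analysis of \S\ref{full torsion} over full $2$-torsion is genuinely exhaustive, so that $\eight\oplus\eight$ occurs there only as the exact torsion $\eight\oplus\eight$. With both superfluous families removed, the surviving groups coincide with those in the statement, completing the proof.
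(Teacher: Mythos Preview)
Your proposal is correct and matches the paper's proof in substance; both start from Theorem~\ref{Laska} and eliminate the surplus groups using exactly the same ingredients (Propositions~\ref{4,32}, \ref{8,12}, \ref{12}, \ref{4+20}, the results of \S\ref{full torsion}, and Proposition~\ref{cyclic}). The paper organizes the case analysis by the odd part $E(F)_{2'}$, whereas you partition the superfluous groups into ``sporadic'' versus ``properly containing $\eight\oplus\eight$'' and handle the latter family uniformly via the single claim that $\eight\oplus\eight\subseteq E(F)$ forces $E(F)_{\text{tors}}\simeq\eight\oplus\eight$ --- but the paper states this very claim explicitly in its first paragraph (``either $\eight\oplus\eight$ is not contained in $E(F)$ or $E(F)\simeq\eight\oplus\eight$''), so the logical content is identical.
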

\begin{proof}

    We begin with the list given in Theorem \ref{Laska}. Suppose that $E(F)_{(2)}\neq {1}$. If 
      $E(F)_{2'}={1}$, then either $\eight \oplus \eight$ is not contained in $E(F)$ or $E(F)\simeq \eight 
      \oplus \eight$ by our results in \S\ref{full torsion} and Proposition \ref{cyclic}. Then with the 
       notation of Theorem \ref{Laska}, if $b=3$, then $r=0$. By Proposition \ref{4,32}, if $b=2$, then $r
      \leq 2$. We obtain the groups:
                        \begin{align*}  
                                            & \Z/{2}\Z \oplus \Z/2N\Z & \hspace{2mm} \text{for} \hspace{2mm}    &  
                                               N=1,2,4,8,  \\
                                            & \four \oplus \Z/4N\Z &   \hspace{2mm} \text{for} \hspace{2mm}    &    
                                                     N=1,2,4,    \\
                                             &  \eight \oplus   \eight   & &   \end{align*} 
             and $ \Z/2\Z \oplus \Z/32\Z $ if $K=\Q(\sqrt{-3})$.         

      Suppose $E(F)_{2'}\simeq \Z/3\Z$. If $\two \oplus \two$ is contained in $E(K)$, then $E(K) \simeq 
      \two \oplus \six$ and we showed that 
      $E(F) \simeq \four \oplus \Z/12\Z$. Otherwise, we know that $E(F)$ cannot contain $\four \oplus       
      \four$ if $K=\Q(i)$. Similarly $E(F)$ cannot contain $\eight \oplus \eight$ if $K=\Q(\sqrt{-3})$. Along     
      with Proposition \ref{8,12}, we are left with three possible groups:                   
              \[ 
              \two \oplus \six, \hspace{2mm} \two \oplus \Z/12\Z, \hspace{2mm} \four \oplus \Z/12\Z
               \]
      The case $E(F)_{2'} \simeq \Z/3\Z \oplus \Z/3\Z$ follows from Proposition \ref{12}, Proposition 
      \ref{cyclic} and Theorem \ref{Laska}. The only possible group is 
                          \[  
                          \Z/6\Z \oplus \Z/6\Z.  
                            \] 
      Similarly 
      when $E(F)_{2'} \simeq \Z/5\Z$, it follows from Proposition \ref{cyclic}, Lemma \ref{4+20} and  
      Theorem \ref{Laska} that the only option is
                   \[     
                   \Z/2\Z \oplus \Z/10\Z.   
                    \] 
      The case where 
                 $E(F)_{(2)} ={1}$ was studied in the first section and we found the groups 
                                \[
                                 \{1\}, \Z/3\Z,  \Z/5\Z, \Z/7\Z, \Z/9\Z, \Z/15\Z  \hspace{1mm} \text{and} 
                                         \hspace{1mm} \Z/3\Z \oplus \Z/3\Z . 
                                \]
  \end{proof}                                             
        
        \begin{remark}
              Every group we listed in Theorem \ref{main} except $\Z/2\Z \oplus \Z/32\Z$ also appears as   
              the torsion subgroup of some elliptic curve defined over $\Q$ in its maximal elementary 
              abelian 2 extensions. We were able to prove neither the nonexistence of an elliptic curve 
              defined over $\Q(\sqrt{-3})$ with such a subgroup in $E(F)$ nor give an example of such a  
              curve. 
        \end{remark}

   \begin{acknowledgements}

        The author wishes to thank Sheldon Kamienny for suggesting this problem and for his kind 
        support. Samir Siksek provided valuable insight for a part of the proof of Theorem \ref{21}. This 
        work also greatly benefited from conversations with Burton Newman and Jennifer Balakrishnan. 
        We also thank the anonymous referee for the remarks on the previous draft.

\end{acknowledgements}


\end{document}